\theoremstyle{definition}
\newtheorem{defin}{Definition}[section]
\newtheorem{rem}[defin]{Remark}
\theoremstyle{plane}
\newtheorem{thm}[defin]{Theorem}
\newtheorem{prop}[defin]{Proposition}
\newtheorem{coroll}[defin]{Corollary}
\newtheorem{lemma}[defin]{Lemma}
\newcommand{\mc}{\mathcal}
\newcommand{\mf}{\mathfrak}
\newcommand{\veps}{\varepsilon}
\newcommand{\what}{\widehat}
\newcommand{\wtilde}{\widetilde}
\newcommand{\vphi}{\varphi}
\newcommand{\oline}{\overline}
\newcommand{\ra}{\rightarrow}
\newcommand{\hra}{\hookrightarrow}
\newcommand{\g}{\gamma}
\newcommand{\s}{\sigma}
\newcommand{\z}{\zeta}
\newcommand{\lam}{\lambda}
\newcommand{\de}{\delta}
\newcommand{\lan}{\langle}
\newcommand{\ran}{\rangle}
\newcommand{\R}{\mathbb{R}}
\newcommand{\N}{\mathbb{N}}
\newcommand{\Z}{\mathbb{Z}}
\newcommand{\T}{\mathbb{T}}
\renewcommand{\div}{{\rm div}\,}
\newcommand{\curl}{{\rm curl}\,}
\newcommand{\Id}{{\rm Id}\,}
\def\d{\partial}
\def\div{{\rm div}\,}
\title{\textsc{\Large{\textbf{Asymptotics of fast rotating  density-dependent incompressible fluids in two space dimensions}}}}
\author{\normalsize\textsl{Francesco Fanelli}$\,^1\qquad$ and $\qquad$
\textsl{Isabelle Gallagher}$\,^{2}$ \vspace{.5cm} \\
\footnotesize{$\,^1\;$ \textsc{Universit\'e de Lyon, Universit\'e Claude Bernard Lyon 1}} \\
{\footnotesize \it Institut Camille Jordan -- UMR 5208} \\
\footnotesize{\ttfamily{fanelli@math.univ-lyon1.fr}} \vspace{.3cm} \\
\footnotesize{$\,^2\;$ \textsc{Universit\'e Paris Diderot, Sorbonne Paris Cit\'e}} \\
{\footnotesize \it Institut de Math\'ematiques de Jussieu-Paris Rive Gauche -- UMR 7586} \\
\footnotesize{\ttfamily{gallagher@math.univ-paris-diderot.fr}}
}
\date\today
\begin{document}

\maketitle

\subsubsection*{Abstract}
{\footnotesize In the present paper we study the fast rotation limit for viscous incompressible fluids with variable density, whose motion is influenced  by the Coriolis force.
We restrict our analysis to two dimensional flows. In the case when the initial density is a small perturbation of a constant state, we recover in the limit the convergence to the homogeneous incompressible
Navier-Stokes equations (up to an additional term, due to density fluctuations). For general non-homogeneous fluids, the limit equations are instead linear, and
the limit dynamics is described in terms of the vorticity and  the density oscillation function: we lack enough regularity on the latter to prove convergence on the momentum equation itself.
The proof of both results relies on a compensated compactness argument, which enables one  to treat also the possible presence of vacuum.}

\paragraph*{2010 Mathematics Subject Classification:}{\small 35Q35 
(primary);
35B25, 
76U05, 
35Q86, 
35B40, 
76M45 
(secondary).}

\paragraph*{Keywords:}{\small incompressible fluids; Navier-Stokes equations; variable density; vacuum; Coriolis force; singular perturbation problem; low Rossby number.}

\section{Introduction} \label{s:intro}

Given a small parameter $\veps\,\in]0,1]$, let us consider the non-homogeneous incompressible Navier-Stokes-Coriolis system
\begin{equation} \label{eq:i_dd-NSC}
\left\{\begin{array}{l}
        \d_t\rho\,+\,\mbox{div} \,(\rho   u)\,=\,0 \\[1ex]
        \d_t(\rho\,u)\,+\,\mbox{div}\,(\rho\,u\otimes   u)\,+\,\dfrac{1}{\veps}\,\nabla\Pi\,+\,\dfrac{1}{\veps}\,\mf{C}(\rho,u)\,-\,
\nu\,\Delta u\,=\,0 \\[1ex]
\div u\,=\,0
       \end{array}
\right.
\end{equation}
in a domain $\Omega\subset\R^d$, with $d=2$ or $3$.  
This system   describes the dynamics of a viscous incompressible fluid with variable density, and whose dynamics is
  influenced by the rotation of the ambient physical system: our main motivation concerns ocean currents on the Earth surface, but the system could also relate to flows on stars or other rotating celestial bodies.
The scalar function~$\rho\geq0$ represents the density of the fluid, $u\in\R^d$ its velocity field and $\Pi$ its pressure; the term $\nabla\Pi$ can be
interpreted as a Lagrangian multiplier associated with the incompressibility constraint $\div u=0$. The parameter $\nu>0$ is the viscosity coefficient, and we have chosen
the very simple form $\Delta u$ for the viscous stress tensor of the fluid. Finally, we have denoted by $\mf{C}(\rho,u)$ the Coriolis operator, which   represents
the influence of the rotation on the fluid motion.

Operator $\mf C$ can assume various forms, depending on the precise assumptions and features one is interested in. In this paper we   place ourselves at mid-latitudes,
that is to say in a not too extended region of the Earth surface which is far enough from the poles and the equator, and we   suppose the rotation axis to be constant. These
are perhaps quite restrictive assumptions from the physical viewpoint, but they are usually assumed in mathematical studies (see e.g. \cite{C-D-G-G}):
indeed, the  model one obtains is already able to describe several important features of ocean dynamics (for instance the Taylor-Proudman theorem, the formation of boundary layers...).
Of course, more precise models can be considered, see e.g.  \cite{Des-Gre} and \cite{G-SR_2006}, where inhomogeneities of the Coriolis force are taken into account, or \cite{Dut}, 
 \cite{Dut-Maj}, \cite{D-M-S}, \cite{G_2008} and \cite{G-SR_Mem}  for an analysis of the~$\beta$-plane model for equatorial flows (where the Coriolis force vanishes).
In the present paper, we restrict our attention to the case $d=2$, for which the Coriolis operator takes the form
$$
\mf{C}(\rho,u)\,=\,\rho\,u^\perp\,,
$$
where, for any vector $v=(v^1,v^2)\in\R^2$, we have defined $v^\perp:=(-v^2,v^1)$.
After adimensionalization, this term is multiplied by a factor $1/{\rm Ro}$, where the \emph{Rossby number} ${\rm Ro}$ represents the inverse of the speed of the Earth rotation.
Hence, the scaling introduced in \eqref{eq:i_dd-NSC} corresponds to taking~${\rm Ro}=\veps$: our main goal here is to study the asymptotic behaviour of a family
$\bigl(\rho_\veps,u_\veps\bigr)_\veps$ of solutions to this system in the limit $\veps\ra0$ and to characterize the limit dynamics. The presence of the factor $1/\veps$ in front of
the pressure term comes from the remark that, in the limit of fast rotation (i.e. for $\veps$ going to $0$), the Coriolis term can be balanced only by $\nabla\Pi$, and so they
must be of the same order (see~\cite{C-D-G-G} and Subsection \ref{ss:constraints} for more details about this point).

The incompressibility constraint is well justified in a first approximation for oceanic flows, see e.g. books \cite{CR}, \cite{Gill} and \cite{Ped}.
In the case of constant density, i.e. when (say) $\rho\equiv1$ in \eqref{eq:i_dd-NSC}, the fast rotation limit has been deeply investigated in the last two decades. We refer e.g.
to \cite{B-M-N_1996}, \cite{B-M-N_1997}, \cite{B-M-N_1999}, \cite{C-D-G-G_2002}, \cite{G_1998}, \cite{G-SR_2006}, among various contributions (the present list is far from being exhaustive).
We refer to book \cite{C-D-G-G} for an overview of the results and further references. It goes without saying that, both for modeling and application purposes, it is important to include
in the model also other features of the fluid, like e.g. stratification, temperature variations, salinity\ldots
For systems allowing density fluctuations, mathematical studies are more recent, and to the best of our knowledge they concern only \emph{compressible flows}: see e.g.
\cite{B-D_2003},  \cite{G_2008}, \cite{G-SR_Mem} for $2$-D viscous shallow water models, where the depth function plays the same role as the density (we refer to  \cite{Dut}, \cite{Dut-Maj},
\cite{D-M-S} for the inviscid case), see \cite{F-G-GV-N}, \cite{F-G-N} for $3$-D barotropic Navier-Stokes equations (see also \cite{F_MA}, \cite{F_JMFM} for the case when strong surface tension is taken into
consideration). As is well-known, in the compressible (and barotropic) case the pressure is a given function of the density: then suitable scalings (roughly speaking, taking the Mach number of
order higher than or equal to that of the Rossby number, with respect to the small parameter $\veps$) enable one to find:
\begin{itemize}
 \item on the one hand, good uniform bounds (by energy estimates) for the density fluctuation functions;
\item on the other hand, a stream-function relation (by the analysis of the singular perturbation operator) linking the limit velocity and density profiles.
\end{itemize}
The combination of these two features is fundamental in the mathematical study, in order to handle the more difficult issues of the analysis (above all, at the convergence
level). In the end, in the limit one obtains a quasi-geostrophic type equation for the limit of the density fluctuations, and this is enough to
describe the whole asymptotic dynamics.

\medbreak
In the present paper we tackle instead the case of density-dependent fluids which are \emph{incompressible}, which is (to the best of our knowledge) completely new in the mathematical literature.
As a first approach, we restrict ourselves to the case $\Omega=\R^2$ or $\T^2$: indeed, on the one hand these simple geometries allow us to neglect boundary layers effects; on the other hand, already in
space dimension $d=2$, the analysis presents several difficulties. We expect this study to be a first step in the understanding of the problem in its whole generality for three-dimensional domains.
We   consider two different frameworks: the former is the \emph{slightly non-homogeneous case}, i.e. when the initial density is a small variation, of order $\veps$,
of a constant reference state; the latter is referred to as the \emph{fully non-homogeneous case}, because the initial density is a perturbation of an arbitrary positive state.
Let us remark that, in the latter instance, we are able to handle also the presence of possible vacuum.

At this point, let us make a physical \textsl{intermezzo} (see Chapter 3 of \cite{CR} or Appendix 3 of \cite{Gill} for details).
In the ocean, the water density is in general a complicated function of the pressure, temperature and salinity. Nonetheless, in many applications one usually neglects dependence on pressure
and assumes linear dependence on the other two quantities. Moreover, in a first approximation it is usual to suppose that temperature and salinity evolve through
a pure diffusion process (parabolic equations for each of these quantities).
A final simplification, commonly adopted in physical models (see e.g. Sections 2.6 and 2.9 of \cite{Les} or Chapter 1 of \cite{Maj}), is the so-called \emph{Boussinesq approximation}: namely,
the non-homogeneity of the fluid is supposed to be a small variation of a constant state $\oline\rho$ (without loss of generality, we can set $\oline{\rho}=1$).
In other words, one can write $\rho\,=\,1+\rho'$, where $|\rho'| \ll1$: then the incompressibility constraint is directly derived from the mass conservation equation at the highest order.
In addition, combining the equations for temperature (or energy) and salinity supplies   an explicit evolution equation, of parabolic type, for the density variation function $\rho'$.
We refer to Section 3.7 of \cite{CR} for more details.

In the light of the previous discussion, we notice here that the framework we adopt in this paper is ``critical'', in the following sense:
\begin{enumerate}[(i)]
\item first of all, the fluid is assumed to be incompressible, so that the pressure term is just a Lagrangian multiplier and   does not provide     any information on the density,
unlike   the case of  compressible fluids;
 \item moreover our model, although being simplified since it does not take into account other quantities like energy and salinity, lacks   information about the density fluctuations
(i.e. the function $\rho'$ mentioned above); even more, we do not want to restrict our attention only to small variations of constant states.
\end{enumerate}
Thus our study of the density must rely on the mass conservation relation only. In addition, this is a hyperbolic type equation (pure transport by the velocity field), for which we can
expect no gain of regularity: this fact is another source of difficulty in the analysis.

Nonetheless, the previous physical considerations suggest that the slightly non-homogeneous situation is somehow easier to be handled, and this turns out to be indeed the case.
As a matter of fact, since the constant state is simply transported by a divergence-free velocity field,   the density can be written as~$\rho_\veps=1+\rho_\veps'=1+\veps r_\veps$,
where, by pure transport again,  good   bounds on the variations $r_\veps$ can be derived. Then, an adaptation of the arguments
of \cite{G-SR_2006} enables us to pass to the limit directly in the mass and momentum equations: the limit dynamics is characterized by a coupling of a
two-dimensional incompressible homogeneous Navier-Stokes system for the limit velocity field $u$, with a transport equation for the limit fluctuation function $r$ by $u$ itself.
We point out that, in the momentum equation, an additional term $ru^\perp$ appears: it can be interpreted as a remainder of the oscillations of the density. This fact can be seen by inserting
the ansatz~$\rho_\veps=1+\veps r_\veps$ into the term $\mf{C}(\rho_\veps,u_\veps)/\veps$ and formally passing to the limit $\veps\ra0$ (in particular,
if $\rho_\veps\equiv1$ for all $\veps$, then   $r\equiv0$ and this term disappears).
The result about the slightly non-homogeneous case is given in Theorem \ref{t:hom}, and is proved in Sections \ref{s:sing-pert} and~\ref{s:hom}.

The fully non-homogeneous case is   more involved; the corresponding result is given in Theorem~ \ref{t:dens},     proved in Sections \ref{s:sing-pert} to~\ref{s:dens}. 
Keep in mind points (i)-(ii) mentioned above: since now the reference density is no longer constant, we   lack   information on the density fluctuations
(which, for the sake of clarity, will be called $\sigma_\veps$ in the fully non-homogeneous setting). This lack of information represents a major difficulty in our study;
in order to overcome it, we resort to the vorticity formulation of the momentum equation, and combine it with the mass equation.
This strategy provides  a link between the vorticity $\eta_\veps$ of
$v_\veps:=\rho_\veps u_\veps$ and $\sigma_\veps$, which nonetheless reveals to be of partial help only, since we derive from it bounds for $(\sigma_\veps)_\veps$ in very rough spaces.
Interpolating this information with another one coming from the mass equation   allows nevertheless to grasp a key property in order
to prove convergence in the general case (see Subsection \ref{ss:further}).
Strictly connected with this difficulty, another important issue in the analysis of the fully non-homogeneous framework is the control of oscillations, which is achieved
by studying the corresponding system of waves, which we call \emph{Rossby waves}. This terminology is typically used for fluctuations due to latitude (i.e., from the mathematical viewpoint,
due to variations of the rotation axis), but in our context having a truly non-constant reference density produces analogous effects. Since, roughly speaking, in this case the singular perturbation
operator has non-constant coefficients, in order to prove convergence we   resort to a compensated compactness argument, introduced by P.-L. Lions and N.  Masmoudi (see e.g. \cite{L-M}) in the study of
incompressible limit and first employed by the second author and L. Saint-Raymond in \cite{G-SR_2006} in the context of rotating fluids with variations of the rotation axis.
Let us mention that our compensated compactness argument is slightly different from the one used in e.g. \cite{F-G-GV-N} (for barotropic compressible Navier-Stokes
equations), and it allows us to treat also the possible presence of vacuum regions.

This method enables us to prove the convergence of the convective term: similarly to~\cite{G-SR_2006} and \cite{F-G-GV-N}, we deduce that the transport
term vanishes in the limit of fast rotation, up to some remainders associated with an additional constraint which has to be satisfied by the limit density and velocity field.
The explanation is that, as already remarked in the above mentioned papers, having a variable limit density imposes a strong constraint on the motion, which translates into the fact that
the kernel of the penalized operator is smaller. As a consequence, the limit equations become linear: this remarkable fact can be interpreted as a sort of turbulent behaviour
of the fluid, where all the scales are mixed and one can identify only an averaged dynamics. In our setting, this will be even more apparent: let us briefly justify this claim.
Essentially due to the very weak information on the density variations~$\sigma_\veps$, we are not able to pass to the limit directly in the original formulation
of our system: the major problems reside in the convergence of the Coriolis term and in the fact that we do not dispose of an explicit equation for the $\sigma_\veps$'s.
As a consequence, exploiting the above mentioned special relation linking $\sigma_\veps$ and $\eta_\veps$, we are only able to prove convergence on the vorticity formulation
of our system, which exactly mixes these two quantities. This is similar to what happens in the compressible case, but in our context we dispose of {no relations}
linking the velocity field $u_\veps$ and the density variations: so this equation provides information only on the special combination of the limit quantities $\sigma$ and
$\eta$, but not on the dynamics of the limit velocity field $u$ and $\sigma$ separately. See also   Remark \ref{r:non-hom_lim} below for further comments about this point.

As a last comment, let us mention that, in Subsection \ref{ss:dens-full}, we will show a convergence result to the full system, where the dynamics of the limit density variation $\sigma$ and
of the limit velocity field $u$ are decoupled. However, this statement is only a conditional convergence result, since it requires strong \textsl{a priori} bounds on the
family of velocity fields. These bounds involve higher regularity than the one we can obtain by classical energy estimates, and   is not propagated uniformly in $\veps$ in general,
due to the singular behaviour of the Coriolis term.
 
\medbreak
Let us conclude this introduction with a short overview of the paper.

In the next section we formulate our assumptions and main results. In Section \ref{s:sing-pert} we study the singular perturbation operator: namely we establish uniform bounds
for the family $\bigl(\rho_\veps,u_\veps\bigr)_\veps$ and the constraints their limit points have to satisfy. Section \ref{s:hom} is devoted to proving convergence in the slightly
non-homogeneous case, while in Section \ref{s:dens} we pass to the limit in the fully non-homogeneous framework.
We postpone to Appendix \ref{app:LP} some tools from Littlewood-Paley theory and paradifferential calculus, which are needed in our analysis.

\subsubsection*{Acknowledgements}

The work of the first author has been partially supported by the LABEX MILYON (ANR-10-LABX-0070) of Universit\'e de Lyon, within the program ``Investissement d'Avenir''
(ANR-11-IDEX-0007), and by the project BORDS, both operated by the French National Research Agency (ANR).

\section{Assumptions and results} \label{s:results}

In the present section we introduce our working assumptions, and after discussing briefly the existence of weak solutions to our system, we formulate our main results.

\subsection{Main assumptions} \label{ss:hyp}

We state here the main assumptions on the initial data. 
For this and the next subsection (where we discuss the existence of weak solutions), we refer to Subsection 2.1 of \cite{Lions_1} for more details.

Given a small parameter $\veps\,\in\,]0,1]$, let us consider, in the domain
$$\Omega\;:=\;\R^2\quad\mbox{ or }\quad\T^2\,,$$
the non-homogeneous incompressible Navier-Stokes equations with Coriolis force
\begin{equation} \label{eq:dd-NSC}
\left\{\begin{array}{l}
        \d_t\rho\,+\,\mbox{div} \,(\rho   u)\,=\,0 \\[1ex]
        \d_t(\rho\,u)\,+\,\mbox{div}\,(\rho\,u\otimes   u)\,+\,\dfrac{1}{\veps}\,\nabla\Pi\,+\,\dfrac{1}{\veps}\,\rho\,u^\perp\,-\,
\nu\,\Delta u\,=\,0 \\[1ex]
\div u\,=\,0\,.
       \end{array}
\right.
\end{equation}
The scalar function $\rho\geq0$ represents the density of the fluid and $u$ is its velocity field; the scalar function $\Pi$ is the pressure of the fluid;
finally, the term $\rho\,u^\perp$ is due to the action of the Coriolis force on the fluid.
We recall that the term $\nabla\Pi$ can be interpreted as the Lagrangian multiplier related to the incompressibility constraint $\div u\,=\,0$.

For any fixed value of $\veps$, we consider a weak solution $\bigl(\rho_\veps,u_\veps\bigr)$ to the previous system, in the sense specified
by Definition \ref{d:weak_2} below.
For this, we supplement system \eqref{eq:dd-NSC} with general \emph{ill-prepared} initial data:   for the density functions, we take
\begin{equation} \label{hyp:rho_0}
\rho_{0,\veps}\,=\,\rho_0\,+\,\veps\,r_{0,\veps}\,,\qquad\qquad\mbox{ with }\qquad \rho_0\in\mc{C}^2_b\quad\mbox{ and }\quad
\bigl(r_{0,\veps}\bigr)_\veps\,\subset\,L^2(\Omega)\cap L^\infty(\Omega)\,.
\end{equation}
 Here and in the following, the notation $\bigl(a_{\veps}\bigr)_\veps\,\subset\,X$ (for some Banach space~$X$) is to be understood as the fact that the
sequence~$\bigl(a_{\veps}\bigr)_\veps$ is uniformly bounded in~$X$.  The symbol~$\mc{C}^2_b$ denotes the subspace of~$\mc C^2$ functions which are bounded as well  as their first and second order derivatives; this condition on $\rho_0$ can be somehow relaxed in
the spirit of \cite{F_JMFM}, but we assume it for simplicity.

Furthermore,  we assume also that there exists a positive constant~$\rho^*>0$ such that
$$
0\,\leq\,\rho_0\,\leq\,\rho^*\qquad\qquad\mbox{ and }\qquad\qquad 0\,\leq\,\rho_{0,\veps}\,\leq\,2\,\rho^*\,,
$$
where $\rho^*$ is a positive real number. The previous assumption means in particular that we are considering the possible presence of vacuum.
Notice that the positivity requirement on $\rho_0$ is a sort of compatibility condition, due to the fact that $\rho_{0,\veps}\,\ra\,\rho_0$ when $\veps$ goes to $0$.
Of course, this imposes some conditions also on the sign of the perturbation functions $r_{0,\veps}$ when $\rho_0$ vanishes, but this is not important at this stage.

\begin{rem} \label{r:dens_0}
Observe that, in the case when $\rho_0\geq\rho_*>0$ (and especially when $\rho_0\equiv\rho_*$), with no loss of generality we can suppose that
$$ 
0\,<\,\rho_*/2\,\leq\,\rho_{0,\veps}\,\leq\,2\,\rho^*\qquad\qquad\mbox{for all }\quad \veps\,\in\,]0,1]\,.
$$ 
\end{rem}

For all $\veps$, we also take an initial momentum $m_{0,\veps}$ such that $m_{0,\veps}=0$ almost everywhere on the set $\bigl\{\rho_{0,\veps}=0\bigr\}$, and  
$$
\bigl(m_{0,\veps}\bigr)_\veps\,\subset\,L^2(\Omega)\qquad\qquad\mbox{ and }\qquad\qquad \left(\left|m_{0,\veps}\right|^2/\rho_{0,\veps}\right)_\veps\,\subset\,L^1(\Omega)\,,
$$
where we agree that $\left|m_{0,\veps}\right|^2/\rho_{0,\veps}=0$ on the set $\bigl\{\rho_{0,\veps}=0\bigr\}$.

\begin{rem} \label{r:vel_0}
In general, one would like to say that $m_{0,\veps}\,=\,\rho_{0,\veps}\,u_{0,\veps}$. Nonetheless, conditions are imposed on $m_{0,\veps}$ rather than on the velocity fields
$u_{0,\veps}$, since these fields may be not defined when the densities $\rho_{0,\veps}$ vanish.
For the same reason, we do not impose any divergence-free condition on the initial velocities. We refer to the discussion of \cite{Lions_1} (see page 25 therein) for more details
about this issue.

Of course, in the absence of vacuum (keep in mind Remark \ref{r:dens_0} above), the previous assumptions on $m_{0,\veps}$ can be reformulated in an easier way, as conditions
on the ``true'' velocity fields: we suppose then
$$
\bigl(u_{0,\veps}\bigr)_\veps\,\subset\,L^2(\Omega)\,,\qquad\qquad\mbox{ with }\qquad \div u_{0,\veps}\,=\,0\,.
$$
\end{rem}

Finally, in the case when $\Omega\,=\,\R^2$, we additionally require that there exists  $\delta>0$ such that, for all $\veps\in\,]0,1]$, one has the uniform embedding
\begin{equation} \label{hyp:vacuum_1}
\left(\frac{1}{\rho_{0,\veps}}\;\mathds{1}_{\{\rho_{0,\veps}<\delta\}}\right)_\veps\;\subset\;L^1(\Omega)\,,
\end{equation}
where $\mathds{1}_A$ denotes the characteristic function of a set $A\subset\R^2$. Alternatively, we suppose that there exist a $\oline{\rho}>0$ and a $p_0\in\,]1,+\infty[\,$ such that
\begin{equation} \label{hyp:vacuum_2}
\left(\bigl(\oline{\rho}\,-\,\rho_{0,\veps}\bigr)^+\right)_\veps\;\subset\;L^{p_0}(\Omega)\,,
\end{equation}
where we have denoted by $f^+$ the positive part of a function $f$.
We point out that other assumptions can be considered on the initial densities: we refer once again to Chapter 2 of \cite{Lions_1}.

Up to passing to subsequences, we can assume that
\begin{equation} \label{eq:conv-initial}
r_{0,\veps}\,\stackrel{*}{\rightharpoonup}\,r_0\quad\mbox{ in }\;L^2(\Omega)\cap L^\infty(\Omega)\qquad\qquad\mbox{ and }\qquad\qquad
m_{0,\veps}\,\rightharpoonup\,m_0\quad\mbox{ in }\;L^2(\Omega)\,.
\end{equation}
Of course, in absence of vacuum, we can suppose that
\begin{equation} \label{eq:conv-u}
u_{0,\veps}\,\rightharpoonup\,u_0\quad\mbox{ in }\;L^2(\Omega)\,.
\end{equation}

Before stating our main results, let discuss briefly energy estimates, and the existence of weak solutions for our equations.

\subsection{Energy inequality, finite energy weak solutions} \label{ss:weak}

Suppose for a while that $(\rho,u)$ are smooth solutions to system \eqref{eq:dd-NSC}, related to smooth initial data~$(\rho_0,m_0)$.
First of all, we notice that the density is simply transported by a divergence-free velocity field: so all its $L^q$ norms are preserved in time.
On the other hand, system \eqref{eq:dd-NSC} has a conserved energy:  for all $t>0$
\begin{equation} \label{est:energy}
\left\|\sqrt{\rho(t)}\,u(t)\right\|^2_{L^2(\Omega)}\,+\,2\,\nu\int^t_0\left\|\nabla u(\tau)\right\|_{L^2(\Omega)}^2\,d\tau\,\leq\,\left\||m_0|^2/\rho_0\right\|_{L^1}\,.
\end{equation}
Notice that, for smooth enough solutions, inequality \eqref{est:energy} becomes actually an equality.
We refer to Subsection \ref{ss:bounds} below for more details. However, the previous considerations motivate the following definition of \emph{weak solution} to our system.

\begin{defin} \label{d:weak_2}
Fix initial data $(\rho_0,m_0)$ such that the conditions listed in Subsection \ref{ss:hyp} above are fulfilled.

We say that $\bigl(\rho,u\bigr)$ is a \emph{weak solution} to system \eqref{eq:dd-NSC}
in $[0,T[\,\times\Omega$ (for some time $T>0$) with initial datum $(\rho_0,m_0)$ if the following conditions are satisfied:
\begin{itemize}
 \item[(i)] $\rho\,\in\,L^{\infty}\bigl([0,T[\,\times\Omega\bigr)$ and $\rho\,\in\,\mc C\bigl([0,T[\,;L^q_{\rm loc}(\Omega)\bigr)$ for  all $1\leq q<+\infty$;
\item[(ii)] $\rho\,|u|^2\,\in L^\infty\bigl([0,T[\,;L^1(\Omega)\bigr)$, with $\nabla u\,\in L^2\bigl([0,T[\,\times\Omega\bigr)$ and $u\,\in\,L^2\bigl([0,T[\,\times\Omega\bigr)$;
\item[(iii)] the mass equation is satisfied in a weak sense: for any $\phi\in\mc{D}\bigl([0,T[\,\times\Omega\bigr)$ one has
\begin{equation} \label{eq:weak-mass}
-\int^T_0\int_{\Omega}\biggl(\rho\,\d_t\phi\,+\,\rho\,u\,\cdot\,\nabla\phi\biggr)\,dx\,dt\,=\,\int_{\Omega}\rho_0\,\phi(0)\,dx\,;
\end{equation}
\item[(iv)] the divergence-free condition on $u$ is satisfied in $\mc{D}'\bigl(\,]0,T[\,\times\Omega\bigr)$;
\item[(v)] the momentum equation is satisfied in a weak sense: for any $\psi\in\mc{D}\bigl([0,T[\times\Omega\bigr)$ such that~$\div\psi=0$, one has
\begin{equation}
\int^T_0\int_{\Omega}\biggl(-\rho\,u\cdot\d_t\psi\,-\,\rho\,u\otimes u:\nabla\psi\,+\,\frac{1}{\veps}\,\rho\,u^\perp\cdot\psi\,+\,
\nu\,\nabla u:\nabla\psi\biggr)\,dx\,dt\,=\,\int_{\Omega}m_0\cdot\psi(0)\,dx\,; \label{eq:weak-momentum}
\end{equation}
\item[(vi)] for almost every $t\in\,]0,T[\,$, the energy inequality \eqref{est:energy} holds true.
\end{itemize}
The solution is \emph{global} if the previous conditions are satisfied for all $T>0$.
\end{defin}

Notice that point (vi) of the previous definition prescribes a finite energy condition on $(\rho,u)$, which are then solutions
\textsl{\`a la Leray} (see \cite{Leray}). Such a condition is the basis for the investigation of the singular perturbation problem, since   
 most of the \textsl{a priori} bounds on our family of weak solutions will be derived from it.

The next result guarantees the existence of a global in time weak solution to the non-homogeneous Navier-Stokes equations with Coriolis force,
for any fixed value of the parameter~$\veps\in\,]0,1]$.
\begin{thm} \label{t:exist}
Fix $\veps\in\,]0,1]$ and consider an initial datum $(\rho_0,m_0)$ as in Definition~{\rm\ref{d:weak_2}} above.
Then there exists a global in time weak solution $(\rho,u)$ to equations~\eqref{eq:dd-NSC}.
\end{thm}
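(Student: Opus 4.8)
The plan is to follow the classical strategy for constructing Leray-type weak solutions to the non-homogeneous incompressible Navier--Stokes equations (see Subsection 2.1 of \cite{Lions_1}), observing that, for a \emph{fixed} value of $\veps$, the Coriolis term $\frac1\veps\,\rho\,u^\perp$ is a harmless lower-order perturbation. The decisive remark is that this term is skew-symmetric: since $u^\perp\cdot u\equiv0$, it does not contribute to the energy balance, so that the \textsl{a priori} estimate \eqref{est:energy} (established, for smooth solutions, in Subsection \ref{ss:bounds} below) continues to hold verbatim. Hence the uniform bounds underlying the whole construction are unaffected, and the only genuinely new point is the passage to the limit in this extra term, which will turn out to be immediate.

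First I would set up an approximation scheme producing a sequence $\bigl(\rho_n,u_n\bigr)_n$ of regular approximate solutions (regularizing the transport of the density, solving the momentum equation by a Galerkin or fixed-point argument, and formulating everything in terms of the momentum $m_n=\rho_n u_n$ so as to accommodate the possible presence of vacuum). From the transport structure of the mass equation and the incompressibility $\div u_n=0$ one gets $0\leq\rho_n\leq2\rho^*$ together with conservation of all $L^q$ norms; from the energy estimate \eqref{est:energy} one obtains $\sqrt{\rho_n}\,u_n$ bounded in $L^\infty_tL^2$ and $\nabla u_n$ bounded in $L^2_{t,x}$, whence $\rho_n u_n=\sqrt{\rho_n}\cdot\sqrt{\rho_n}\,u_n$ is bounded in $L^\infty_tL^2$. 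When $\Omega=\R^2$, the vacuum hypotheses \eqref{hyp:vacuum_1}--\eqref{hyp:vacuum_2} serve precisely to control $u_n$ itself (and not merely the momentum) in $L^2_{t,x}$, as required by point (ii) of Definition \ref{d:weak_2}.

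The heart of the argument is the compactness needed to pass to the limit in the nonlinear terms $\rho_n u_n$ and $\rho_n u_n\otimes u_n$. I would proceed in two stages, using the \emph{time--space} compactness lemma of P.-L.~Lions (the same tool underlying the compensated compactness method of \cite{L-M}). The mass equation gives $\d_t\rho_n=-\div(\rho_n u_n)$ bounded in a negative-order space, so $(\rho_n)_n$ is equicontinuous in time with values in $L^q_{\rm loc}$ endowed with its weak topology; the bound on $\nabla u_n$ gives spatial (Rellich) compactness of $u_n$. Combining these two facts yields $\rho_n u_n\upra\rho u$, which is exactly where the Coriolis term is handled, $\frac1\veps\,\rho_n u_n^\perp\upra\frac1\veps\,\rho\,u^\perp$. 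Iterating the same lemma — now with $\rho_n u_n$, whose time-compactness follows from the momentum equation (all of whose terms, the Coriolis one included, are uniformly bounded), tested against the space-compact factor $u_n$ — gives $\rho_n u_n\otimes u_n\upra\rho u\otimes u$. Once all limits are identified, one passes to the limit in the weak formulations \eqref{eq:weak-mass} and \eqref{eq:weak-momentum}, and recovers \eqref{est:energy} for the limit by weak lower semicontinuity of the norms; globality of the solution is inherited from the globality of all the bounds.

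I expect the main obstacle to be precisely this passage to the limit in the convective term in the possible presence of vacuum, where the velocity field is not individually defined on $\{\rho=0\}$ and $\rho\,u\otimes u$ must be given a meaning through the momentum alone. The key is that the two-stage scheme above never uses any spatial regularity of the density — none is available, the mass equation being a pure transport with no smoothing — but only its time-compactness coming from the evolution equations. The remaining delicate point is to upgrade the weak-in-space convergence $\rho_n\to\rho$ in $\mc C\bigl([0,T[\,;L^q_{w}\bigr)$ to the strong continuity $\rho\in\mc C\bigl([0,T[\,;L^q_{\rm loc}\bigr)$ demanded by Definition \ref{d:weak_2}(i): this is obtained through the DiPerna--Lions theory of renormalized solutions, using $\div u=0$ and the conservation of the $L^q$ norms.
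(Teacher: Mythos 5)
Your proposal is correct and coincides in substance with the paper's own treatment: the paper simply invokes the existence theory of Chapter 2 of \cite{Lions_1} for the non-homogeneous incompressible Navier--Stokes equations, remarking exactly as you do that the Coriolis term vanishes identically in the energy estimate (being orthogonal to $u$) and that passing to the limit in it is no harder than for the other linear terms. Your write-up merely expands the details of Lions' construction (approximation, bounds including the vacuum hypotheses \eqref{hyp:vacuum_1}--\eqref{hyp:vacuum_2} for the $L^2$ control of $u$ on $\R^2$, two-stage time--space compactness, DiPerna--Lions renormalization for the time-continuity of the density) that the paper leaves to the citation.
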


For a proof of the previous statement, we refer to Chapter 2 of \cite{Lions_1}. We remark that the presence of the Coriolis term can easily  be  handled: indeed, it  vanishes identically vanishes
in the energy estimates (since it is orthogonal to $u$), and passing to the limit in it in the compactness argument asks for a similar effort as in treating the time derivative, for instance.

\subsection{Statement of the results} \label{ss:results}

We now consider a family of initial data $\bigl(\rho_{0,\veps},m_{0,\veps}\bigr)_{\veps\in\,]0,1]}$ satisfying all the assumptions stated in Subsection \ref{ss:hyp} above.
For any fixed~$\veps\in\,]0,1]$, Theorem \ref{t:exist} provides  a global in time weak solution~$(\rho_\veps,u_\veps)$ to system \eqref{eq:dd-NSC}.
Our main purpose is to characterize the dynamics in the limit for~$\veps$ going to $0$, namely the system of equations verified by the limit points of the family
$\bigl(\rho_\veps,u_\veps\bigr)_\veps.$

It turns out that two different scenarios may occur:
\begin{itemize}
 \item[(i)] the initial density is a small perturbation of a constant state, namely $\rho_0\equiv1$ (or another positive constant);
\item[(ii)] the reference state of density is truly non-constant, meaning (roughly) that $\nabla\rho_0\neq0$: the precise assumption which is needed is given in condition
\eqref{eq:non-crit} below.
\end{itemize}

In the former case, since the constant density configuration is transported by the velocity field, we are led to studying the incompressible and homogeneous limits simultaneously.
Although the non-homogeneity $\rho_{0,\veps}$ requires  some effort in order to be handled, a not so difficult adaptation of the arguments used in \cite{G-SR_2006}
allows us to prove convergence to $2$-D homogeneous incompressible Navier-Stokes equations.
More precisely, we have the following statement, which will be proved in Section \ref{s:hom}. Remark that we can formulate assumptions directly on the initial velocities $u_{0,\veps}$
(recall Remark \ref{r:vel_0} above), since   the density does not vanish.

\begin{thm} \label{t:hom}
Let us set $\rho_0\,=\,1$ and consider a family of initial data $\bigl(\rho_{0,\veps},u_{0,\veps}\bigr)_\veps$ satisfying the assumptions fixed in Subsection~{\rm\ref{ss:hyp}}.
Let $\bigl(\rho_\veps\,,\,u_\veps\bigr)_\veps$ be a family of corresponding weak solutions to system
\eqref{eq:dd-NSC} in $\R_+\times\Omega$, as given by Theorem~{\rm\ref{t:exist}}.
Let us define $u_0$ and $r_0$ as done in \eqref{eq:conv-initial}-\eqref{eq:conv-u}, and set~$r_\veps\,:=\,\veps^{-1}\left(\rho_\veps-1\right)$.

Then, there exist $r\in L^\infty\bigl(\R_+;L^2(\Omega)\cap L^\infty(\Omega)\bigr)$ and $u\,\in\,L^\infty\bigl(\R_+;L^2(\Omega)\bigr)\,\cap\,L^2_{\rm loc}\bigl(\R_+;H^1(\Omega)\bigr)$,
with $\div u=0$ in the distributional sense, such that, up to the extraction of a subsequence, for any time $T>0$ fixed, one has the following convergence properties:
\begin{itemize}
 \item[(a)] $r_\veps\,\stackrel{*}{\rightharpoonup}\,r$ in $L^\infty\bigl([0,T];L^2(\Omega)\cap L^\infty(\Omega)\bigr)$;
 \item[(b)] $u_\veps\,\stackrel{*}{\rightharpoonup}\,u$ in $L^\infty\bigl([0,T];L^2(\Omega)\bigr)\,\cap\,L^2\bigl([0,T];H^1(\Omega)\bigr)$.
\end{itemize}
Moreover, $u$ is a global in time weak solution of the homogeneous Navier-Stokes equations with initial datum $u_0$, while $r$ satisfies (still in the weak sense) a pure transport equation by $u$,
with initial datum $r_0$. Namely, the following equations hold true in the weak sense:
\begin{equation} \label{eq:hom_lim}
\left\{\begin{array}{l}
        \d_tr\,+\,{\mbox{\rm div} }\,(r \,  u)\,=\,0 \\[1ex]
        \d_tu\,+\,\div(u\otimes u)\,+\,\nabla\Pi\,+\,r\,u^\perp\,-\,\nu\,\Delta u\,=\,0 \\[1ex]
         \div u\,=\,0\,,
       \end{array}
\right.
\end{equation}
with initial data $r_{|t=0}\,=\,r_0$ and $u_{|t=0}\,=\,u_0$, for a suitable pressure function $\Pi$. \\
If, in addition, $r_0\in H^{1+\beta}(\Omega)$, for some $\beta>0$, and $u_0\in H^1(\Omega)$, then the solution $(r,u,\Pi)$ to system \eqref{eq:hom_lim}, related
to the initial datum $(r_0,u_0)$, is unique. Hence, the convergence holds for the whole sequence $\bigl(r_\veps,u_\veps\bigr)_\veps$.
\end{thm}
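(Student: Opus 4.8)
The plan is to combine uniform \emph{a priori} estimates with a compactness argument in the spirit of \cite{G-SR_2006}, the crucial simplification for $\rho_0\equiv1$ being that the penalised Coriolis operator degenerates to a gradient when tested against divergence-free fields. First I would record that, since $\div u_\veps=0$, the quantity $\rho_\veps-1$ is purely transported, so $r_\veps=\veps^{-1}(\rho_\veps-1)$ solves $\d_t r_\veps+\div(r_\veps u_\veps)=0$; hence all its $L^q$ norms are conserved and the bound on $(r_{0,\veps})_\veps$ propagates to a uniform bound for $(r_\veps)_\veps$ in $L^\infty(\R_+;L^2(\Omega)\cap L^\infty(\Omega))$. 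In particular $\|\rho_\veps-1\|_{L^\infty}=\veps\|r_\veps\|_{L^\infty}\to0$, so (recall Remark \ref{r:dens_0}) $\rho_\veps$ stays between two positive constants, and the energy inequality \eqref{est:energy} yields uniform bounds for $(u_\veps)_\veps$ in $L^\infty(\R_+;L^2)$ and for $(\nabla u_\veps)_\veps$ in $L^2(\R_+;L^2)$. Weak-$*$ compactness then gives, up to extraction, the limits $r$ and $u$ together with the convergences (a)--(b), and $\div u=0$ passes to the limit trivially.

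\textbf{Step 2 (strong convergence of the velocity).} To handle the quadratic term I need strong convergence of $u_\veps$. Writing $\rho_\veps u_\veps^\perp=u_\veps^\perp+\veps\,r_\veps u_\veps^\perp$ and observing that $\curl(u_\veps^\perp)=\div u_\veps=0$, the field $u_\veps^\perp$ is a gradient and is therefore annihilated by the Leray projector $\mathbb P$. Consequently the singular term disappears from $\d_t\mathbb P(\rho_\veps u_\veps)$, which is thus uniformly bounded in some $L^2_{\rm loc}(\R_+;H^{-N}_{\rm loc})$; combined with the $H^1$ spatial bound, an Aubin--Lions--Simon argument gives strong convergence of $\mathbb P(\rho_\veps u_\veps)$ in $L^2_{\rm loc}$. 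Since $u_\veps=\mathbb P(\rho_\veps u_\veps)-\veps\,\mathbb P(r_\veps u_\veps)$ with an $O(\veps)$ correction, I obtain $u_\veps\to u$ strongly in $L^2_{\rm loc}(\R_+\times\Omega)$. On $\T^2$ the mean of $u_\veps$ must be treated apart: integrating the momentum equation shows it solves the rotation ODE $\tfrac{d}{dt}\overline{m}_\veps=-\veps^{-1}\overline{m}_\veps^{\,\perp}$, so it oscillates at frequency $1/\veps$ and averages weakly to zero, which suffices.

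\textbf{Step 3 (passage to the limit).} With strong $L^2_{\rm loc}$ convergence of $u_\veps$ and the $2$-D interpolation $\|u_\veps\|_{L^4}\lesssim\|u_\veps\|_{L^2}^{1/2}\|\nabla u_\veps\|_{L^2}^{1/2}$, I pass to the limit in each term of \eqref{eq:weak-mass}--\eqref{eq:weak-momentum}: $\rho_\veps u_\veps\otimes u_\veps\to u\otimes u$ (using $\rho_\veps\to1$ uniformly), $r_\veps u_\veps\to r\,u$ and $r_\veps u_\veps^\perp\to r\,u^\perp$ (weak-$*$ times strong), while $\nu\nabla u_\veps\rightharpoonup\nu\nabla u$. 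Testing the momentum equation against divergence-free $\psi$, the singular part $\veps^{-1}\int u_\veps^\perp\cdot\psi$ vanishes identically (as $u_\veps^\perp$ is a gradient and $\div\psi=0$), leaving only $\int r_\veps u_\veps^\perp\cdot\psi\to\int r\,u^\perp\cdot\psi$, which is exactly the remnant term of the density oscillations. Since $\rho_{0,\veps}\to1$ forces $m_0=u_0$ in \eqref{eq:conv-initial}, the limit $(r,u)$ satisfies the weak form of \eqref{eq:hom_lim}, the pressure $\nabla\Pi$ being recovered from the non-solenoidal part by de Rham's theorem.

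\textbf{Step 4 (uniqueness and full convergence).} Under $r_0\in H^{1+\beta}$, $u_0\in H^1$, I would use that $u$ inherits the parabolic regularity $L^\infty_t H^1\cap L^2_t H^2$ of $2$-D Navier--Stokes, so the transport equation propagates $r\in L^\infty_t H^{1+\beta}$; in particular $r\in L^\infty_{t,x}$ and $\nabla r\in L^\infty_t L^{2/(1-\beta)}$. For two solutions I set $\delta u=u_1-u_2$, $\delta r=r_1-r_2$ and estimate $\delta u$ in $L^2$ (with $H^1$ dissipation) and $\delta r$ in $L^2$: the coupling term $\int\delta r\,u_1^\perp\cdot\delta u$ and the transport forcing $\int(\delta u\cdot\nabla r_2)\,\delta r$ are controlled by Gagliardo--Nirenberg interpolation, the surplus regularity $\beta>0$ furnishing exactly the integrability ($\nabla r_2\in L^{2/(1-\beta)}$, $r_i\in L^\infty$) needed to absorb $\|\nabla\delta u\|_{L^2}$ into the viscous term and to close a Gronwall inequality. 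Uniqueness then upgrades the convergence to the whole family. I expect this final step to be the main obstacle: the transport equation gains no regularity, so propagating just enough smoothness of $r$ and matching it against the borderline $2$-D velocity bounds is delicate, and is precisely why the strict threshold $\beta>0$ is required.
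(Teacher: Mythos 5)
Your proposal is correct, but its core step --- the treatment of the convective term --- takes a genuinely different route from the paper. Both arguments rest on the same cancellation, namely that $\div u_\veps=0$ makes $u_\veps^\perp$ curl-free, but you exploit it through the Leray projector: since $\mathbb{P}(u_\veps^\perp)=0$, the $\veps^{-1}$ term disappears from $\d_t\mathbb{P}(\rho_\veps u_\veps)$, and an Aubin--Lions--Simon argument gives strong $L^2_{\rm loc}$ convergence of $u_\veps$ itself (using $\rho_\veps u_\veps=u_\veps+O(\veps)$ in $L^2$), after which all nonlinear terms pass to the limit classically. The paper (Subsections \ref{ss:approx}--\ref{ss:comp-comp}) never establishes, nor claims, strong convergence of $u_\veps$: it regularizes by the cut-offs $S_M$, writes $\div(u_{\veps,M}\otimes u_{\veps,M})=\frac12\nabla|u_{\veps,M}|^2+\omega_{\veps,M}\,u_{\veps,M}^\perp$, and extracts time-compactness only of the regularized vorticity from $\d_t\eta_{\veps,M}=\curl\wtilde{f}_{\veps,M}$ (the curl-level avatar of your projection identity), concluding by strong $\times$ weak convergence. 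Your approach buys brevity and a stronger conclusion --- genuine strong compactness of the velocities, modulo the torus mean, which you rightly isolate: the mean only oscillates at frequency $1/\veps$, but its quadratic self-interaction is spatially constant, hence invisible against $\nabla\psi$ with $\div\psi=0$. The paper's approach buys robustness: for non-constant $\rho_0$ one has $\mathbb{P}(\rho_0\,u_\veps^\perp)\neq0$, strong convergence genuinely fails (Rossby waves), and only the compensated-compactness scheme survives into Section \ref{s:dens}, which is why the authors rehearse it already in the slightly non-homogeneous case. Two points to tighten: (a) you apply Aubin--Lions to $\mathbb{P}(\rho_\veps u_\veps)=u_\veps+\veps\,\mathbb{P}(r_\veps u_\veps)$, whose $H^1$ spatial bound holds only for the first summand; argue instead via time-equicontinuity in $H^{-1}$ plus Ascoli--Arzel\`a in $H^{-\delta}_{\rm loc}$, then interpolate with the $L^2_T(H^1)$ bound on $u_\veps$, exactly as the paper does for $\rho_\veps$ in \eqref{cv:strong-rho}; (b) in Step 4, with $u\in L^2_T(H^2)$ the transport equation propagates only $H^{1+\g}$ regularity for $\g<\beta$ (Proposition 5.2 of \cite{D_2004}, as invoked for \eqref{est:hom_r_H}), not the full $H^{1+\beta}$ you assert --- this is harmless, since your Gagliardo--Nirenberg absorption only needs some positive surplus of regularity, and your stability scheme otherwise coincides with Proposition \ref{p:stab}.
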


The previous result is not really surprising, since it allows us to recover for slightly non-homogeneous fluids (almost) the same result as for homogeneous ones (see also Remark
\ref{r:1_vort} below). However, its proof is interesting since it will suggest how to argue in the truly non-constant density case, where we work in a very low regularity framework.

Let us comment also on the uniqueness for the limit system. System \eqref{eq:hom_lim} is studied in Subsection \ref{ss:hom_limit} below: Theorem \ref{t:hom_r-u}  
specifies the functional class where the solution is unique. We   also show energy and stability estimates for those equations.

\begin{rem} \label{r:1_vort}
Compared to the homogeneous case, in the limit equation \eqref{eq:hom_lim} we notice the presence of the additional term $r\,u^\perp$,
which is essentially due to variations of the density, as explained in the Introduction.
Notice that one can pass to the vorticity formulation of the previous equations: denoting the vorticity of~$u$ by~$\omega:=\curl u :=\d_1u^2-\d_2u^1$ and using also the first relation
in \eqref{eq:hom_lim}, we find
$$
\d_t\bigl(\omega\,-\,r\bigr)\,+\,u\cdot\nabla\omega\,-\,\nu\,\Delta\omega\,=\,0\,.
$$
Using the Biot-Savart law $u\,=\,-\,(-\Delta)^{-1}\nabla^\perp\omega$, we can remark the strong analogy of the previous equation with the one usually found
for compressible fluids, see e.g.  \cite{F-G-GV-N}, \cite{F-G-N} (see also \cite{F_MA} for a modified formulation, due to capillary effects in the limit).
There, the vorticity formulation is more convenient in passing to the limit, since one disposes of the additional relation $u=\nabla^\perp r$
(namely $r$ is a stream function for $u$), which gives $\omega=\Delta r$.
\end{rem}

\medbreak
Let us now turn our attention to the case of an effectively variable reference density. Similarly to what happens for compressible fluids (see e.g. \cite{F-G-GV-N}),
having a non-constant density in the limit dynamics imposes a strong constraint on weak-limit points of the family of solutions: the consequence of this fact is that the
convective term vanishes in the limit and the final equations become linear. Analogous effects can be noticed in
the case of variations of the rotation axis (see e.g. \cite{G-SR_2006} and \cite{F_JMFM}).

For technical reasons, we need to assume that the reference density $\rho_0$ has non-degenerate critical points, in the sense of \cite{G-SR_2006}.
Namely, we suppose
\begin{equation} \label{eq:non-crit}
\lim_{\delta\ra0}\;\mu\left(\left\{x\,\in\,\Omega\;\Bigl|\;\bigl|\nabla\rho_0(x)\bigr|\,\leq\,\delta\right\}\right)\,=\,0\,,
\end{equation}
where  $\mu(\mc O)$ denotes the Lebesgue measure of a set $\mc O\,\subset\R^2$.

\begin{thm} \label{t:dens}
Assume the reference density $\rho_0$ satisfies condition \eqref{eq:non-crit}, and consider a family of initial data $\bigl(\rho_{0,\veps},u_{0,\veps}\bigr)_\veps$ satisfying
the assumptions fixed in Subsection~{\rm\ref{ss:hyp}}. Let $\bigl(\rho_\veps\,,\,u_\veps\bigr)_\veps$ be a family of corresponding weak solutions to system
\eqref{eq:dd-NSC} in $\R_+\times\Omega$, as given by Theorem~{\rm\ref{t:exist}}.
Let us define~$m_0$ and $r_0$ as  in \eqref{eq:conv-initial}, and set $\s_\veps\,:=\,\veps^{-1}\left(\rho_\veps-\rho_0\right)$.

Then, there exist $\s\in L^\infty\bigl(\R_+;H^{-2-\delta}(\Omega)\bigr)$, for $\delta>0$ arbitrarily small, and a vector field~$u$ in the
space~$L^2_{\rm loc}\bigl(\R_+;H^1(\Omega)\bigr)$,
with $\div u=\div\bigl(\rho_0\,u\bigr)=0$, such that, up to the extraction of a subsequence, for any  fixed time $T>0$, one has the following convergence properties:
\begin{itemize}
 \item[(a)] $\rho_\veps\,\stackrel{*}{\rightharpoonup}\, {\rho}_0$ in $L^\infty\bigl([0,T];L^2(\Omega)\cap L^\infty(\Omega)\bigr)$;
 \item[(b)] $u_\veps\,\rightharpoonup\,u$ in $L^2\bigl([0,T];H^1(\Omega)\bigr)$;
 \item[(c)] $\s_\veps\,\stackrel{*}{\rightharpoonup}\,\s$ in $L^\infty\bigl([0,T];H^{-2-\delta}(\Omega)\bigr)$ for all $\delta>0$.
\end{itemize}
Moreover, set $\omega\,:=\,\curl u$ and $\eta\,:=\,\curl\bigl(\rho_0\,u\bigr)$: then there exists a distribution $\Gamma\,\in\,\mc D'\bigl(\R_+\times\Omega\bigr)$ such that
the equation 
\begin{equation} \label{eq:dens_lim}
\d_t (\eta\,-\,\s )\,-\,\nu\,\Delta\omega\,+\,\curl\bigl(\rho_0\,\nabla\Gamma\bigr)\,=\,0\,,
\end{equation}
supplemented with the initial condition $(\eta\,-\,\s)_{|t=0}\,=\,\curl m_0\,-\,r_0$, is verified in the weak sense.
\end{thm}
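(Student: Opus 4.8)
The plan is to derive uniform bounds from the energy inequality and the transport structure, to recast the momentum equation in vorticity form so as to \emph{remove the singular terms}, and only then to pass to the limit by a compensated compactness argument.

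\textbf{Uniform bounds and the convergences (a)--(b).} The energy inequality \eqref{est:energy} gives $\bigl(\sqrt{\rho_\veps}\,u_\veps\bigr)_\veps\subset L^\infty\bigl(\R_+;L^2\bigr)$ and $\bigl(\nabla u_\veps\bigr)_\veps\subset L^2\bigl(\R_+;L^2\bigr)$, while the fact that $\rho_\veps$ is transported by the divergence-free field $u_\veps$ yields $\bigl(\rho_\veps\bigr)_\veps\subset L^\infty\bigl(\R_+;L^2\cap L^\infty\bigr)$, and hence also $\bigl(\rho_\veps u_\veps\bigr)_\veps\subset L^\infty\bigl(\R_+;L^2\bigr)$ with no vacuum obstruction. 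Using the vacuum hypotheses \eqref{hyp:vacuum_1}--\eqref{hyp:vacuum_2} to control the part of $u_\veps$ supported where $\rho_\veps$ is small, one promotes the gradient bound to $\bigl(u_\veps\bigr)_\veps\subset L^2_{\rm loc}\bigl(\R_+;H^1\bigr)$, which gives (b). Since $\rho_\veps-\rho_0=\veps\,\s_\veps$ with $\s_\veps$ uniformly bounded in a negative space (see below), one has $\veps\,\s_\veps\ra0$ and the weak-$*$ limit of $\rho_\veps$ is $\rho_0$, which is (a). Finally, rewriting the mass equation as $\d_t\rho_\veps=-\div(\rho_\veps u_\veps)$ shows $\bigl(\d_t\rho_\veps\bigr)_\veps\subset L^\infty(H^{-1})$; together with the $L^\infty(L^2)$ bound, an Aubin--Lions argument produces the \emph{strong} convergence $\rho_\veps\ra\rho_0$ in $\mc C_{\rm loc}\bigl(\R_+;L^q_{\rm loc}\bigr)$, which is what will let me pass to the limit in the products $\rho_\veps u_\veps$ and $\rho_\veps\,u_\veps\otimes u_\veps$.

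\textbf{The vorticity identity and the convergence (c).} The decisive point is the algebraic identity $\curl(\rho\,u^\perp)=\div(\rho\,u)$; combined with the mass equation $\div(\rho_\veps u_\veps)=-\d_t\rho_\veps=-\veps\,\d_t\s_\veps$, it turns the singular Coriolis term into $\veps^{-1}\curl(\rho_\veps u_\veps^\perp)=-\d_t\s_\veps$. Taking the $\curl$ of the momentum equation therefore annihilates the pressure and eliminates the factor $1/\veps$, leaving, with $\eta_\veps:=\curl(\rho_\veps u_\veps)$ and $\omega_\veps:=\curl u_\veps$, the \emph{non-singular} equation
\begin{equation} \label{eq:vort-plan}
\d_t\bigl(\eta_\veps-\s_\veps\bigr)\,+\,\curl\div\bigl(\rho_\veps\,u_\veps\otimes u_\veps\bigr)\,-\,\nu\,\Delta\omega_\veps\,=\,0\,.
\end{equation}
From \eqref{eq:vort-plan} I read off the bound on $\s_\veps$: by the two-dimensional embedding $H^1\hra L^4$ one has $\bigl(\rho_\veps\,u_\veps\otimes u_\veps\bigr)_\veps\subset L^1_{\rm loc}(L^2)$, so the convective term is bounded in $L^1_{\rm loc}(H^{-2})$ and the viscous term in $L^2_{\rm loc}(H^{-2})$; as the initial datum $\curl m_{0,\veps}-r_{0,\veps}$ is bounded in $H^{-1}$, integrating in time gives $\bigl(\eta_\veps-\s_\veps\bigr)_\veps\subset L^\infty_{\rm loc}(H^{-2})$, and since $\eta_\veps$ is bounded in $L^\infty(H^{-1})$ we conclude $\bigl(\s_\veps\bigr)_\veps\subset L^\infty(H^{-2})$. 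Interpolating this very weak control with the transport information carried by the mass equation upgrades it, for every $\delta>0$, to a bound (together with the time-equicontinuity needed later) in $L^\infty\bigl(\R_+;H^{-2-\delta}\bigr)$, which yields (c); this balancing is the delicate technical step of the estimate.

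\textbf{Passage to the limit and the main obstacle.} The constraints satisfied by the limit come from the singular perturbation operator (Section \ref{s:sing-pert}): multiplying the momentum equation by $\veps$ and letting $\veps\ra0$ forces $\nabla\Pi+\rho_0\,u^\perp=0$, whence $\div u=0$ and, taking the $\curl$, $\div(\rho_0\,u)=\curl(\rho_0 u^\perp)=0$. In \eqref{eq:vort-plan} the first term converges to $\d_t(\eta-\s)$, using $\rho_\veps u_\veps\rightharpoonup\rho_0 u$ (strong density times weak velocity) together with (c), and the viscous term converges to $\nu\,\Delta\omega$ by (b). \emph{The main obstacle is the convective term} $\curl\div(\rho_\veps u_\veps\otimes u_\veps)$, which is quadratic in the only weakly convergent field $u_\veps$ and hence does not pass to the limit term by term. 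Here I would run the compensated compactness argument of P.-L. Lions and N. Masmoudi \cite{L-M}, in the form employed in \cite{G-SR_2006} and \cite{F-G-GV-N}: because the penalized operator carries the non-constant coefficient $\rho_0$, the Rossby waves cannot be filtered by Fourier analysis, and one exploits instead the $\div$--$\curl$ structure together with the constraint $\div(\rho_0 u)=0$ to show that the weak limit of $\div(\rho_\veps u_\veps\otimes u_\veps)$ lies in the orthogonal complement of the constraint space $\{w:\div w=\div(\rho_0 w)=0\}$, that is, it is of the form $\nabla\Pi^{(1)}+\rho_0\nabla\Gamma$ for suitable distributions $\Pi^{(1)}$ and $\Gamma$. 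Taking the $\curl$ kills $\nabla\Pi^{(1)}$ and leaves exactly $\curl(\rho_0\nabla\Gamma)$, so that \eqref{eq:vort-plan} passes to \eqref{eq:dens_lim}; the initial condition $(\eta-\s)_{|t=0}=\curl m_0-r_0$ follows by passing to the limit in the datum via \eqref{eq:conv-initial}. Since the whole argument is phrased in terms of the momentum $\rho_\veps u_\veps$ rather than $u_\veps$, the possible presence of vacuum creates no additional difficulty.
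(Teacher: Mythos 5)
Your architecture matches the paper's: the identity $\curl(\rho\,u^\perp)=\div(\rho\,u)$ combined with the mass equation is exactly how the paper obtains its non-singular vorticity relation \eqref{eq:vort_form}, your uniform bounds and the constraint analysis agree with Subsections \ref{ss:bounds}--\ref{ss:constraints}, and your bookkeeping for (c) (flux in $L^1_T(L^2)$ via $H^1\hra L^4$, viscous term in $L^2_T(H^{-2})$, then $\eta_\veps\in L^\infty_T(H^{-1})$ minus $\eta_\veps-\s_\veps\in L^\infty_T(H^{-2})$) is a legitimate, even marginally sharper, variant of the paper's $f_\veps\in L^2_T(H^{-1-\alpha})$ route. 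The genuine gap is the convective term: you cite the Lions--Masmoudi compensated compactness method ``in the form employed in \cite{G-SR_2006}'' but never execute it, and here it is not an off-the-shelf application --- it is the bulk of the proof (Section \ref{s:dens}). Concretely, your argument never uses hypothesis \eqref{eq:non-crit}, which is indispensable: the paper introduces the cut-off $b_M(x)=b\bigl(2^{M/2}\nabla\rho_0(x)\bigr)$ to isolate the degenerate set of $\nabla\rho_0$ (whose measure vanishes precisely by \eqref{eq:non-crit}), decomposes $u_{\veps,M}$ on the renormalized basis $\{\nabla\rho_0,\nabla^\perp\rho_0\}$ where exact cancellations occur, and replaces $u_{\veps,M}\cdot\nabla\rho_0$ by $\div V_{\veps,M}$ up to errors of size $O(\veps^\theta)$ --- this last step resting on the interpolation gain of Proposition \ref{p:s-uniform} and the commutator decomposition of Proposition \ref{p:u-V}, neither of which appears in your plan. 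The surviving product $\eta_{\veps,M}\,\div V_{\veps,M}$ is then annihilated by the wave system \eqref{reg:approx-w}, which rewrites it as $-\veps\,\frac{d}{dt}\bigl((\eta_{\veps,M}-\s_{\veps,M})\s_{\veps,M}+|\s_{\veps,M}|^2/2\bigr)+\veps\,\curl f_{\veps,M}\,\s_{\veps,M}$: that identity is the actual ``compensation'', and your orthogonal-complement heuristic (``the weak limit lies in the annihilator of $\{w:\div w=\div(\rho_0 w)=0\}$, hence equals $\nabla\Pi^{(1)}+\rho_0\nabla\Gamma$'') is a description of the conclusion, not a proof of it; it also tacitly assumes enough regularity on the limit flux for $\Gamma$ to exist, which the paper extracts from uniform $L^1_T(L^1)$ bounds on the $\Gamma_{\veps,M}$ (Remark \ref{r:Gamma}).

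Two smaller corrections. First, your ``Aubin--Lions'' claim of strong convergence $\rho_\veps\ra\rho_0$ in $\mc C_{\rm loc}\bigl(\R_+;L^q_{\rm loc}\bigr)$ does not follow from the bounds you have: $\rho_\veps$ carries no spatial-derivative control, so $L^\infty_T(L^2\cap L^\infty)$ plus $\d_t\rho_\veps\in L^\infty_T(H^{-1})$ only yields compactness in negative Sobolev spaces, which is exactly \eqref{cv:strong-rho}; strong $L^q$ compactness would require the DiPerna--Lions renormalization machinery, which you neither invoke nor need, since the negative-space convergence combined with $u_\veps\rightharpoonup u$ in $L^2_T(H^1)$ already suffices for the products $\rho_\veps u_\veps$ (Lemma \ref{l:rho-u}) and for replacing $\rho_\veps$ by $\rho_0$ in the flux (Lemma \ref{l:dens_conv}). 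Second, your closing remark on vacuum is reversed relative to the paper: the argument tolerates vacuum precisely because it is phrased in terms of $\rho_0\,u_{\veps,M}\otimes u_{\veps,M}$ rather than the momentum --- working with $\frac{1}{\rho_0}\,V_{\veps,M}\otimes V_{\veps,M}$ would require $\rho_0\geq\rho_*>0$ (Remark \ref{r:comp-comp}).
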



\begin{rem} \label{r:wek-conv}
Let us stress the fact that we are able to prove a true weak convergence of system~\eqref{eq:dd-NSC} to equation \eqref{eq:dens_lim}. More precisely,
the convergence holds true whenever  the mass and momentum equations in \eqref{eq:dd-NSC} are tested respectively on scalar test functions $\phi$ and on vector-valued divergence free test functions
$\psi\,=\,\nabla^\perp\phi$  (without requiring any other constraint). 
\end{rem}

\begin{rem} \label{r:eta-omega}
Let us point out that equation \eqref{eq:dens_lim} can be formulated in terms of $\omega$ and $\sigma$ only, using the following relation:
$$
\eta\,=\,-\,\div\left(\rho_0\,\nabla(-\Delta)^{-1}\omega\right)\,,
$$
which easily follows from writing $\rho_0\,u\,=\,\nabla^\perp\lam_1$ and $u\,=\,\nabla^\perp\lam_2$ (which both hold true thanks to the divergence-free conditions on the limit). Indeed,
on the one hand, taking the $\curl$ of both equations yields $\eta\,=\,\Delta\lam_1$ and $\omega\,=\,\Delta\lam_2$; on the other hand, the relation $\rho_0\,\nabla\lam_2\,=\,\nabla\lam_1$
implies that $\div\bigl(\rho_0\,\nabla\lam_2\bigr)\,=\,-\div\bigl(\rho_0\,\nabla(-\Delta)^{-1}\omega\bigr)\,=\,\Delta\lam_1\,=\,\eta$.
\end{rem}

\begin{rem} \label{r:non-hom_lim}
In terms of the limit density fluctuation $\sigma$ and the limit velocity field $u$, the final equations~(\ref{eq:dens_lim}) can be written formally as
\begin{equation} \label{eq:dens_lim-full}
\left\{\begin{array}{l}
        \d_t\s\,+\,u\cdot\nabla\s\,=\,0 \\[1ex]
        \rho_0\,\d_tu\,+\,\nabla\Pi\,+\,\rho_0\,\nabla\Gamma\,+\,\s\,u^\perp\,-\,\nu\,\Delta u\,=\,0 \\[1ex]
         \div u\,=\,\div\bigl(\rho_0\,u\bigr)\,=\,0\,.   
       \end{array}
\right. 
\end{equation}
We remark that the term $\rho_0\,\nabla\Gamma$ in the second relation can be understood as the Lagrangian multiplier associated with the constraint $\div\bigl(\rho_0\,u\bigr)\,=\,0$.
Roughly speaking, this term comes from the Coriolis operator when considered at the highest order, in the limit of fast rotation: see also Remark \ref{r:constr-cor} about this point.

Nonetheless, contrary to the case of a constant reference density, here we do not have enough regularity on the density oscillations $(\s_\veps)_\veps$ to prove the convergence
on the velocity equations and to find an equation   for the $\sigma_\veps$'s themselves. So the derivation of the previous system is just formal and, in order to rigorously pass to the limit,
we are forced to resort to the vorticity formulation.

We postpone to Subsection \ref{ss:dens-full} (see Theorem~\ref{t:dens-full} therein) the statement of a conditional result where, under suitable assumptions, we are able to show convergence
to (a modified version of) the full system \eqref{eq:dens_lim-full}.
\end{rem}

\section{Study of the singular perturbation} \label{s:sing-pert}

In the present section we establish important properties on the family of weak solutions we are considering. First we derive uniform bounds, and 
then we look for conditions their limit points have to satisfy; finally, we return to $\bigl(\rho_\veps,u_\veps\bigr)_\veps$ and we infer additional properties, especially in the case of
non-constant $\rho_0$.

\subsection{Uniform bounds} \label{ss:bounds}

We establish here first uniform bounds on the family of weak solutions $\bigl(\rho_\veps,u_\veps\bigr)_\veps$ to system \eqref{eq:dd-NSC}.
The formal arguments we   use are fully justified for smooth solutions, and the  bounds obtained pass to weak solutions thanks to a standard approximation procedure
(see for instance Chapter~2 of~\cite{Lions_1}).

\subsubsection{Bounds on the density functions} \label{sss:b-dens}

To begin with, let us focus on the density functions $\rho_\veps$. By Theorem 2.1 of \cite{Lions_1}, their $L^\infty$ bounds are preserved since they satisfy a pure transport equation
by a divergence-free vector field. Therefore we get, for any $\veps\in\,]0,1]$ and for almost every $(t,x)\in\R_+\times\Omega$,
\begin{equation} \label{est:dens_inf}
0\,\leq\,\rho_{\veps}(t,x)\,\leq\,2\,\rho^*\,.
\end{equation}
In particular, $(\rho_\veps)_\veps$ is uniformly bounded in the space $L^\infty\bigl(\R_+\times\Omega\bigr)$, and hence one gathers the existence of a positive function
$\rho\,\in\,L^\infty\bigl(\R_+\times\Omega\bigr)$, which satisfies the same bounds as \eqref{est:dens_inf}, and such that (up to the extraction of a subsequence)
\begin{equation} \label{cv:weak-rho}
\rho_\veps\;\stackrel{*}{\rightharpoonup}\;\rho \quad\mbox{ in }\quad L^\infty\bigl(\R_+;L^1_{\rm loc}(\Omega)\cap L^\infty(\Omega)\bigr)\,.
\end{equation}

Let us focus for a while on the slightly non-homogeneous case, i.e. when $\rho_0\,\equiv\,1$. Defining the quantity $r_\veps\,:=\,\veps^{-1}\,(\rho_\veps-1)$ as
in the statement of Theorem \ref{t:hom} above, the mass equation can be rewritten as
\begin{equation} \label{eq:r_e}
\d_tr_\veps\,+\,\,\mbox{div} \,(  r_\veps\, u_\veps) \,=\,0\;,\qquad\qquad (r_\veps)_{|t=0}\,=\,r_{0,\veps}\,.
\end{equation}
From this equation and the assumption on the initial data~\eqref{hyp:rho_0}, we deduce the uniform bounds
\begin{equation} \label{est:1_dens-L^p}
\bigl(r_\veps\bigr)_\veps\,\subset\,L^\infty\bigl(\R_+;L^2(\Omega)\cap L^\infty(\Omega)\bigr)\,,
\end{equation}
and therefore there exists a $r\,\in\,L^\infty\bigl(\R_+;L^2(\Omega)\cap L^\infty(\Omega)\bigr)$ such that, up to an extraction, $r_\veps\,\stackrel{*}{\rightharpoonup}\,r$
in this space.

Let us now consider the general non-homogeneous case. Suppose that $u_\veps$ is smooth. Then we can introduce its flow $\psi_\veps(t,x)\,\equiv\,\psi_{\veps,t}(x)$, defined by the formula
$$
\psi_{\veps,t}(x)\,:=\,x\,+\,\int^t_0u_\veps\bigl(\tau,\psi_{\veps,\tau}(x)\bigr)\,d\tau
$$
for all $(t,x)\in\R_+\times\Omega$. Then, by the transport equation for the density, we deduce that
$$
\rho_\veps(t,x)\,=\,\rho_{0,\veps}\left(\psi_{\veps,t}^{-1}(x)\right)\,=\,\wtilde{\rho}_\veps(t,x)\,+\,\veps\,r_\veps(t,x) \, , 
$$
where, in analogy to what was done above, we have defined
$$
\wtilde{\rho}_\veps(t,x)\,=\,\rho_{0}\left(\psi_{\veps,t}^{-1}(x)\right)\qquad\qquad\mbox{ and }\qquad\qquad
r_\veps(t,x)\,=\,r_{0,\veps}\left(\psi_{\veps,t}^{-1}(x)\right)\,.
$$
Namely both $\rho_0$ and $r_{0,\veps}$ are transported by $u_\veps$; correspondingly, $\wtilde{\rho}_\veps$ and $r_\veps$ verify a pure transport equation
$\d_ta\,+\,u_\veps\cdot\nabla a\,=\,0$, from which we deduce
$$
0\,\leq\,\wtilde{\rho}_\veps\,\leq\,\rho^*\qquad\qquad\mbox{ and }\qquad\qquad \bigl(r_\veps\bigr)_\veps\,\subset\,L^\infty\bigl(\R_+;L^2(\Omega)\cap L^\infty(\Omega)\bigr)\,.
$$
The previous uniform bounds are inherited by weak solutions, even if $u_\veps$ is no longer smooth.
However, in light of Proposition \ref{p:constr} below, it turns out that,
apart from the case $\rho_0\equiv1$ (and thus~$\wtilde{\rho}_\veps(t)\equiv1$ for any time $t\geq0$), the decomposition $\rho_\veps\,=\,\wtilde{\rho}_\veps+\veps r_\veps$
is not suitable for the analysis of the singular perturbation problem. We refer to Subsection \ref{ss:further} for more comments about this point.

\subsubsection{Estimates for the velocity fields, and consequences} \label{sss:b-vel}

We turn now our attention to the momentum equation. Thanks to~(\ref{est:energy}) we have the bounds
$$
\left\|\sqrt{\rho_\veps(t)}\,u_\veps(t)\right\|^2_{L^2(\Omega)}\,+\,2\,\nu\int^t_0\left\|\nabla u_\veps(\tau)\right\|_{L^2(\Omega)}^2\,d\tau\,\leq\,
C\,\left\||m_{0,\veps}|^2/\rho_{0,\veps}\right\|_{L^1} 
$$
for all $t>0$. Notice that, by assumption on the initial data, the right-hand side of the previous inequality is uniformly bounded. 
Hence, keeping in mind property \eqref{est:dens_inf} on the density, we easily deduce that
\begin{equation} \label{ub:u_L^p}
\bigl(\sqrt{\rho_\veps}\,u_\veps\bigr)_\veps\;\subset\;L^\infty\bigl(\R_+;L^2(\Omega)\bigr)\qquad\mbox{ and }\qquad
\bigl(\nabla u_\veps\bigr)_\veps\;\subset\;L^2\bigl(\R_+;L^2(\Omega)\bigr)\,,
\end{equation}
uniformly in $\veps\in\,]0,1]$.

Let us focus on the case $\Omega=\R^2$ for a while. Conditions \eqref{hyp:vacuum_1} and \eqref{hyp:vacuum_2} are preserved for $\rho_\veps$, as a consequence of the pure transport equation by a
divergence-free velocity field. So, referring to point 8 in Remark 2.1 of \cite{Lions_1}, we also infer the inclusion
\begin{equation} \label{ub:u_L^2}
\bigl(u_\veps\bigr)_\veps\,\subset\,L^2\bigl([0,T[\,\times\Omega\bigr)
\end{equation}
for any fixed time $T>0$. 
For notational convenience, in the following we denote, for any~$p$ and any Banach space~$X$,
$$
L^p_T(X)\,:=\,L^p\bigl([0,T[\,;X(\Omega)\bigr) \,.
$$
Then we deduce that $\bigl(u_\veps\bigr)_\veps$ is a bounded family in~$L^2_T(H^1)$ for any $T>0$.
On the other hand, in the case $\Omega=\T^2$ this property follows by combining \eqref{ub:u_L^p}, Sobolev embeddings and Gagliardo-Nirenberg inequality (see Proposition \ref{p:Gagl-Nir}
in the Appendix).

Therefore, there exists~$u\,\in\,L^2_{\rm loc}\bigl(\R_+;H^1(\Omega)\bigr)$ such that,
up to extraction of a subsequence, there holds~$u_\veps\,\rightharpoonup\,u$ in this space, when $\veps\ra0$.

\medbreak
Thanks to the previous properties, we can also establish strong convergence of the densities.
More precisely, we write the mass equation under the form $ \d_t\rho_\veps\,=\,-\,\div\bigl(\rho_\veps\,u_\veps\bigr)$: this implies that, for all fixed $T>0$,
$\bigl(\d_t\rho_\veps\bigr)_\veps$ is uniformly bounded in $L^\infty\bigl([0,T];H^{-1}(\Omega)\bigr)$, and
so~$\bigl(\rho_\veps\bigr)_\veps$ is   bounded in $W^{1,\infty}\bigl([0,T];H^{-1}_{\rm loc}(\Omega)\bigr)$ uniformly in $\veps$ (the local condition in $H^{-1}_{\rm loc}(\Omega)$ comes from the fact that
$\rho_{0,\veps}$ is just $L^2_{\rm loc}(\Omega)$). Then, by the Ascoli-Arzel\`a Theorem we gather
that the family~$(\rho_\veps)_\veps$ is compact in $L^\infty\bigl([0,T];H^{-1}_{\rm loc}(\Omega)\bigr)$, and hence, keeping in mind \eqref{cv:weak-rho}, by interpolation
we deduce the strong convergence property (up to passing to a suitable subsequence)
\begin{equation} \label{cv:strong-rho}
\rho_\veps\,\longrightarrow\,\rho\qquad\qquad\mbox{ in }\qquad \mc{C}^{0,1-\eta}\bigl([0,T];H^{-1+\eta}_{\rm loc}(\Omega)\bigr)
\end{equation}
for all $0\leq\eta<1$, where we have denoted by $\mc C^{0,\g}$ the space of $\g$-H\"older continuous functions, with~$\mc C^{0,\g}$ replaced by $W^{1,\infty}$ when $\g=1$.

\begin{rem} \label{r:r_strong}
We notice that a feature analogous to \eqref{cv:strong-rho} can be deduced also on the family $(r_\veps)_\veps$, in the slightly non-homogeneous case.
\end{rem}

Finally, let us establish a very simple convergence property. Since it will be repeatedly used in what follows, we choose to devote a statement to it.
\begin{lemma} \label{l:rho-u}
Let $\bigl(\rho_\veps,u_\veps\bigr)_\veps$ be a family of weak solutions to system \eqref{eq:dd-NSC}, associated  with initial data~$\bigl(\rho_{0,\veps},u_{0,\veps}\bigr)$ satisfying
the assumptions fixed in Subsection~{\rm\ref{ss:hyp}}. Let $(\rho,u)$ be a limit point of the sequence $\bigl(\rho_\veps,u_\veps\bigr)_\veps$, as identified in Subsection~{\rm\ref{ss:bounds}}.

Then the product $\bigl(\rho_\veps\,u_\veps\bigr)_\veps$ converges to $\rho\,u$ in the distributional sense, and more precisely in the weak
topology of $L^2\bigl([0,T];H^{-\kappa}_{\rm loc}(\Omega)\bigr)$, for all $0<\kappa<1$.
\end{lemma}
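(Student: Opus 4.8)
The plan is to combine the strong convergence of the densities with the weak convergence of the velocity fields through a standard \emph{strong-weak} product argument. First I would record the uniform bounds: by \eqref{est:dens_inf} the family $(\rho_\veps)_\veps$ is bounded in $L^\infty\bigl(\R_+\times\Omega\bigr)$, while \eqref{ub:u_L^p} together with the embedding $L^2_T(H^1)\hookrightarrow L^2_T(L^2_{\rm loc})$ shows that $(u_\veps)_\veps$ is bounded in $L^2_T(L^2_{\rm loc})$. Hence the product $(\rho_\veps u_\veps)_\veps$ is bounded in $L^2_T(L^2_{\rm loc})$, and in particular in $L^2_T(H^{-\kappa}_{\rm loc})$ for every $\kappa>0$, via the inclusion $L^2\hookrightarrow H^{-\kappa}$. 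Since on each compact $K\subset\Omega$ the family is bounded in the reflexive space $L^2\bigl([0,T];H^{-\kappa}(K)\bigr)$, any weakly convergent subsequence must have the distributional limit as its weak limit; so it suffices to prove that $\rho_\veps u_\veps\to\rho u$ in $\mc D'\bigl(\,]0,T[\,\times\Omega\bigr)$, and the announced weak convergence then follows by uniqueness of the limit.

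To identify the distributional limit, fix a test function $\psi\in\mc D\bigl(\,]0,T[\,\times\Omega\bigr)$ and write
\begin{equation*}
\int_0^T\!\!\int_\Omega\bigl(\rho_\veps u_\veps-\rho u\bigr)\cdot\psi\,dx\,dt
\,=\,\int_0^T\!\!\int_\Omega(\rho_\veps-\rho)\,u_\veps\cdot\psi\,dx\,dt
\,+\,\int_0^T\!\!\int_\Omega\rho\,(u_\veps-u)\cdot\psi\,dx\,dt\,.
\end{equation*}
The second integral tends to $0$: indeed $\rho\,\psi\in L^2_T(L^2)$ (because $\rho$ is bounded and $\psi$ has compact support), and $u_\veps\rightharpoonup u$ weakly in $L^2_T(L^2_{\rm loc})$ as a direct consequence of the weak convergence in $L^2_T(H^1)$. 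For the first integral I would fix $s\in\,]0,1[\,$ and read it as the time integral of the spatial duality pairing $\langle\rho_\veps-\rho,\,u_\veps\,\psi\rangle_{H^{-s}\times H^{s}}$; by Cauchy-Schwarz in time this is bounded by $\|\rho_\veps-\rho\|_{L^2_T(H^{-s}_{\rm loc})}\,\|u_\veps\,\psi\|_{L^2_T(H^{s})}$. The second factor is uniformly bounded, since multiplication by the fixed smooth compactly supported $\psi$ is continuous on $H^1$ and $H^1\hookrightarrow H^s$, so that $\|u_\veps\,\psi\|_{L^2_T(H^s)}\lesssim\|u_\veps\|_{L^2_T(H^1)}$. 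The first factor tends to $0$ by the strong convergence \eqref{cv:strong-rho} with the choice $\eta=1-s$, whence the whole first integral vanishes in the limit.

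The point requiring care — and the conceptual heart of the statement — is precisely this first integral: neither factor of the product converges strongly in a space allowing a naive passage to the limit, since the density converges strongly only in spaces of negative regularity and the velocity only weakly. The mechanism that saves the day is the compensation offered by the duality $H^{-s}\times H^{s}$ with $0<s<1$: the gain of regularity of $u_\veps$ (which lies in $H^1$) over the loss suffered by $\rho_\veps$ (which converges strongly in $H^{-s}$ with $s<1$) leaves a strictly positive margin, and it is exactly this margin that makes the pairing continuous and the \emph{strong}-times-\emph{bounded} estimate work. Collecting the two contributions yields $\rho_\veps u_\veps\to\rho u$ in $\mc D'$, and hence, by the uniform bound recorded above, weakly in $L^2_T(H^{-\kappa}_{\rm loc})$ for every $\kappa\in\,]0,1[\,$, as claimed.
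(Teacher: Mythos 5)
Your proof is correct, and while it rests on the same two analytic inputs as the paper's proof --- the strong convergence \eqref{cv:strong-rho} of the densities in negative local Sobolev spaces and the weak convergence of the velocities in $L^2_T(H^1)$ --- the implementation is genuinely different. The paper disposes of the product in one stroke by invoking Corollary \ref{c:product} (ii), the paraproduct-based bilinear law $H^{-\eta}\times H^{1}\to H^{-\eta-\delta}$, which transfers strong-times-weak convergence of the factors directly into weak convergence of the product in $L^2_T(H^{-\eta-\delta})$. You avoid paradifferential calculus altogether: the splitting $\rho_\veps u_\veps-\rho u=(\rho_\veps-\rho)\,u_\veps+\rho\,(u_\veps-u)$ reduces matters to the elementary $H^{-s}\times H^{s}$ duality, together with the fact (interpolation between $L^2$ and $H^1$) that multiplication by a fixed test function is bounded on $H^{s}$; you then upgrade the resulting $\mc D'$ convergence to the asserted weak convergence through the uniform bound of $\bigl(\rho_\veps u_\veps\bigr)_\veps$ in $L^2_T(L^2_{\rm loc})$ --- which exploits the $L^\infty$ bound \eqref{est:dens_inf} on the densities --- combined with uniqueness of distributional limits. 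Each route buys something: the paper's is a one-liner given the toolbox of Appendix \ref{app:LP}, and the product laws of Corollary \ref{c:product} are in any case indispensable later for products such as $\veps^{-\theta}\,s_\veps\,u_\veps$ (Proposition \ref{p:s-uniform}) or $\sigma_\veps\,u_\veps$ (Proposition \ref{p:full-sigma_e}), where no uniform $L^\infty$ bound is available and your boundedness trick would fail; your argument, on the other hand, is self-contained and, as a byproduct of the $L^2_T(L^2_{\rm loc})$ bound, yields the slightly stronger conclusion that the convergence actually holds weakly in $L^2_T(L^2_{\rm loc})$, not merely in $L^2_T(H^{-\kappa}_{\rm loc})$. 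The only step you leave implicit is the routine localization: to read $\int^T_0\!\int_\Omega(\rho_\veps-\rho)\,u_\veps\cdot\psi$ as a time integral of $H^{-s}\times H^{s}$ pairings you should insert a cutoff $\chi\in\mc D(\Omega)$ with $\chi\equiv1$ near $\Supp\psi$ and pair $\chi\,(\rho_\veps-\rho)$ with $u_\veps\cdot\psi$, which is exactly what the subscript ``loc'' in \eqref{cv:strong-rho} permits.
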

\begin{proof}
We have proved in \eqref{cv:strong-rho} that the family $(\rho_\veps)_\veps$ is compact in e.g. $\mc{C}^{0,\eta}\bigl([0,T];H^{-\eta}_{\rm loc}(\Omega)\bigr)$ for some~$0<\eta<1$,
and it strongly converges to $\rho$ in the previous space. On the other hand,~$(u_\veps)_\veps$ is weakly convergent in $L^2_T(H^1)$ to $u$.
Therefore, by Corollary \ref{c:product} (ii) we get $\rho_\veps\,u_\veps\,\rightharpoonup\,\rho\,u$ in  the space~$L^2_T(H^{-\eta-\delta})$ and the result follows.
\end{proof}

\subsection{Constraints on the limit} \label{ss:constraints}

In the previous part we have proved uniform bounds on the family of weak solutions $\bigl(\rho_\veps,u_\veps\bigr)_\veps$, which allow us to identify (up to extraction)
weak limits $(\rho,u)$. In the present subsection, we collect some properties these limit points have to satisfy. 
We point out that these conditions do not fully characterize   the limit dynamics. 
\begin{prop} \label{p:constr}
Let $\bigl(\rho_\veps,u_\veps\bigr)_\veps$ be a family of weak solutions to system \eqref{eq:dd-NSC}, associated with  initial data $\bigl(\rho_{0,\veps},u_{0,\veps}\bigr)$ satisfying
the assumptions fixed in Subsection~{\rm\ref{ss:hyp}}. Let $(\rho,u)$ be a limit point of the sequence $\bigl(\rho_\veps,u_\veps\bigr)_\veps$, as identified in Subsection~{\rm\ref{ss:bounds}}.

Then one deduces the relations $\rho(t)\,\equiv\,\rho_0$ for all $t\geq0$ and $\;\div u\,=\,\div\bigl(\rho_0\,u\bigr)\,=\,0$ in the sense of~$\mc{D}'\bigl(\R_+\times\Omega\bigr)$.
In particular, we have that~$u(t)\cdot\nabla\rho_0\,=\,0$ for almost every $t\geq0$.
\end{prop}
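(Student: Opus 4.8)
The plan is to extract the three constraints in sequence: first the incompressibility $\div u = 0$, then the ``Coriolis constraint'' that $\div(\rho\,u)=0$, then the time-independence of $\rho$ (hence $\rho\equiv\rho_0$), and finally the orthogonality $u\cdot\nabla\rho_0=0$. The first one is immediate: since $u_\veps\rightharpoonup u$ weakly in $L^2_T(H^1)$ and each $u_\veps$ is divergence free in $\mc D'$, the linear constraint $\div u=0$ passes to the weak limit.

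The heart of the matter is the analysis of the penalized terms. I would take the weak momentum formulation \eqref{eq:weak-momentum}, test it against a divergence-free $\psi=\nabla^\perp\phi$ with $\phi\in\mc D\bigl((0,T)\times\Omega\bigr)$ (so that the initial-datum term drops out), and multiply the whole identity by $\veps$. The key observation is that every contribution except the Coriolis one then carries a factor $\veps$ in front of a quantity bounded uniformly in $\veps$: the momentum $\rho_\veps u_\veps=\sqrt{\rho_\veps}\,\bigl(\sqrt{\rho_\veps}\,u_\veps\bigr)$ is bounded in $L^\infty_T(L^2)$ by \eqref{est:dens_inf} and \eqref{ub:u_L^p}, the convective tensor $\rho_\veps u_\veps\otimes u_\veps$ is controlled by $\rho_\veps|u_\veps|^2\in L^\infty_T(L^1)$ (or via Gagliardo--Nirenberg, Proposition \ref{p:Gagl-Nir}), $\nabla u_\veps$ is bounded in $L^2_T(L^2)$, and $m_{0,\veps}$ in $L^2$. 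Hence all these terms vanish as $\veps\ra0$, while the Coriolis term $\frac1\veps\rho_\veps u_\veps^\perp\cdot\psi$ becomes $\rho_\veps u_\veps^\perp\cdot\psi$, which converges thanks to Lemma \ref{l:rho-u} (giving $\rho_\veps u_\veps^\perp\rightharpoonup\rho\,u^\perp$). In the limit we obtain $\int_0^T\int_\Omega\rho\,u^\perp\cdot\nabla^\perp\phi\,dx\,dt=0$ for all such $\phi$; integrating by parts this reads $\curl(\rho\,u^\perp)=0$, and since in two dimensions $\curl(v^\perp)=\div v$, we conclude $\div(\rho\,u)=0$ in $\mc D'$. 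Equivalently, $\rho\,u^\perp$ is a gradient $\nabla\Pi$, which matches Remark \ref{r:constr-cor}.

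I would then feed this back into the limiting mass equation. Passing to the limit in \eqref{eq:weak-mass} using the strong convergence \eqref{cv:strong-rho} of $\rho_\veps$ together with $\rho_\veps u_\veps\rightharpoonup\rho\,u$ from Lemma \ref{l:rho-u}, the limit satisfies $\d_t\rho+\div(\rho\,u)=0$; but the divergence term now vanishes, so $\d_t\rho=0$ in $\mc D'$, i.e. $\rho$ is independent of time. Since \eqref{cv:strong-rho} also yields $\rho(0)=\lim_\veps\rho_{0,\veps}=\rho_0$ (because $\rho_{0,\veps}=\rho_0+\veps r_{0,\veps}\ra\rho_0$ by \eqref{hyp:rho_0} and \eqref{eq:conv-initial}), we obtain $\rho(t)\equiv\rho_0$ for all $t\geq0$. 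Consequently $\div(\rho_0\,u)=\div(\rho\,u)=0$, and combining this with $\div u=0$ gives $u\cdot\nabla\rho_0=\div(\rho_0\,u)-\rho_0\,\div u=0$ almost everywhere.

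The main obstacle I anticipate is the rigorous verification that every non-Coriolis term in the scaled momentum equation really tends to zero: one must exhibit, uniformly in $\veps$, a fixed Banach space of distributions in which each of $\rho_\veps u_\veps$, $\rho_\veps u_\veps\otimes u_\veps$ and $\nabla u_\veps$ is bounded, which for the convective term in dimension two forces the use of the Gagliardo--Nirenberg control of $u_\veps$ (or, more cheaply, the $L^\infty_T(L^1)$ bound on $\rho_\veps|u_\veps|^2$ coming from the energy inequality). A secondary delicate point is that $\rho_\veps u_\veps^\perp\rightharpoonup\rho\,u^\perp$ is \emph{not} a product of two merely weakly convergent sequences but genuinely relies on the strong convergence of the density in \eqref{cv:strong-rho}, as packaged in Lemma \ref{l:rho-u}; and the step from ``orthogonal to all divergence-free test fields'' to ``$\div(\rho\,u)=0$'' is precisely where the two-dimensional identity $\curl(v^\perp)=\div v$ does the work.
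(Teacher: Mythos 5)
Your proposal is correct and follows essentially the same route as the paper's own proof: multiplying the weak momentum formulation by $\veps$, using the uniform energy bounds to kill all terms except the Coriolis one, invoking Lemma \ref{l:rho-u} to pass to the limit there (yielding $\div(\rho\,u)=0$), and then feeding this constraint back into the limiting mass equation to get $\d_t\rho=0$, $\rho\equiv\rho_0$, and finally $u\cdot\nabla\rho_0=0$. Your only cosmetic deviations — testing directly against $\psi=\nabla^\perp\phi$ to drop the initial-datum term rather than keeping it and noting the factor $\veps$ makes it vanish, and stating $\curl(v^\perp)=\div v$ explicitly instead of first writing $\rho\,u^\perp=\nabla\pi$ and taking the curl — change nothing of substance.
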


\begin{proof}
For $T>0$ fixed, let us consider a smooth function $\psi\in\mc{D}\bigl([0,T[\,\times\Omega\bigr)$ such that $\div\psi=0$, and let us test the momentum equation on $\veps\psi$. We get
\begin{eqnarray*}
& & \hspace{-1cm}
\int^T_0\int_{\Omega}\biggl(-\veps\,\rho_\veps\,u_\veps\cdot\d_t\psi\,-\,\veps\,\rho_\veps\,u_\veps\otimes u_\veps:\nabla\psi\,+ \\
& & \qquad\qquad\qquad
+\,\rho_\veps\,u_\veps^\perp\cdot\psi\,+\,\veps\,\nu\,\nabla u_\veps:\nabla\psi\biggr)\,dx\,dt\,=\,\veps\,\int_{\Omega}\rho_{0,\veps}\,u_{0,\veps}\cdot\psi(0)\,dx\,.
\end{eqnarray*}

By inequalities established in Subsection \ref{ss:bounds}, we know that $\bigl(\rho_\veps\,u_\veps\bigr)_\veps$ is uniformly bounded in e.g. $L^\infty_T(L^2)$,
and so are $\bigl(\rho_\veps\,u_\veps\otimes u_\veps\bigr)_\veps$ and $\bigl(\nabla u_\veps\bigr)_\veps$ respectively in $L^\infty_T(L^1)$ and in~$L^2_T(L^2)$.
Then, keeping in mind the assumptions on the initial data it is easy to see that, apart from the Coriolis operator, all the other terms in the previous relation
converge to $0$ in the limit $\veps\ra0$.

Therefore in   light of Lemma \ref{l:rho-u} above, we infer that
$$
\lim_{\veps\ra0}\,\int_0^T\int_\Omega\rho_\veps\,u_\veps^\perp\cdot\psi\,dx\,dt\;=\;\int_0^T\int_\Omega\rho\,u^\perp\cdot\psi\,dx\,dt\;=\;0
$$
for all test functions $\psi$ such that $\div\psi=0$. This property tells us that
\begin{equation} \label{eq:perp}
\rho\,u^\perp\,=\,\nabla\pi\,,
\end{equation}
for some suitable function $\pi$. Taking the $\rm curl$ of the previous relation, we immediately deduce that~$\div(\rho\,u)\,=\,0$.
Obviously,  the divergence-free condition on $u$, namely $\div u=0$, also has to be satisfied in the weak sense.

Let us now consider the mass equation. Thanks to the previous bounds and Lemma~\ref{l:rho-u}, we can pass to the limit in its weak formulation with no difficulty: we thus find
(still in the weak sense) the relation
$$
\d_t\rho\,+\,\mbox{div} (\rho\, u)\,=\,0\,,
$$
with initial condition $\rho_{|t=0}\,=\,\rho_0$. But the constraint $\div(\rho\,u)=0$ in $\mc D'$ forces us to have $\d_t\rho\,\equiv\,0$, and therefore
$$
\rho(t,x)\,\equiv\,\rho_0(x)\qquad\qquad\mbox{ for all }\quad (t,x)\,\in\,\R_+\times\Omega\,.
$$
To conclude, since now we have enough regularity, we can unravel the property $\div(\rho_0\,u)=0$ and write it as $u\cdot\nabla\rho_0\,=\,0$.
Hence, the last sentence of the statement follows by noting that $u\cdot\nabla\rho_0$ belongs to $L^2_T(H^1)$.
\end{proof}

Before continuing our study, some remarks are in order.

\begin{rem} \label{r:constr-1}
For $\rho_0=1$, the only constraint on the limit velocity field is the relation $u^\perp\,=\,\nabla\pi$, which follows simply from the divergence-free condition.
On the other hand, it is easy to pass to the limit in the equation for the $r_\veps$'s (see also Section \ref{s:hom}).
\end{rem}

\begin{rem} \label{r:constr-cor}
In the fully non-homogeneous case, in the limit $\veps\ra0$, the Coriolis operator can be interpreted as a Lagrangian multiplier associated with the constraint
$\div\bigl(\rho_0\,u\bigr)\,=\,0$.
%
\end{rem}

\subsection{Further properties and remarks} \label{ss:further}

Before proving the convergence results, let us focus on the fully non-homogeneous case for a while, and establish further important properties
for the density function.

\subsubsection{The ``good unknown'' for the density} \label{sss:good}

As already pointed out at the end of Paragraph \ref{sss:b-dens}, the decomposition $\rho_\veps\,=\,\wtilde{\rho}_\veps+\veps r_\veps$,
where $\wtilde{\rho}_\veps$ and $r_\veps$ are obtained transporting respectively $\rho_0$ and $r_{0,\veps}$ by $u_\veps$, is not suitable for our analysis.

In view of Proposition \ref{p:constr}, the right \textsl{ansatz} to formulate is rather
$$
\rho_\veps(t,x)\;=\;\rho_0(x)\,+\,s_\veps(t,x) \, .
$$
First of all, the family $\bigl(s_\veps\bigr)_\veps$ is uniformly bounded in $L^\infty(\R_+\times\Omega)$, because both $\bigl(\rho_\veps\bigr)_\veps$ and $\rho_0$ are.
Moreover, it satisfies (in the weak sense) the equation
\begin{equation} \label{eq:s_e}
\d_ts_\veps\,+\,\mbox{div} ( s_\veps u_\veps ) \,=\,-\,\div\bigl(\rho_0\,u_\veps\bigr)\,=\,-\,u_\veps\cdot\nabla\rho_0\,,
\end{equation}
with initial datum $(s_\veps)_{|t=0}\,=\,\veps\,r_{0,\veps}\,\in\,L^2\cap L^\infty$. From \eqref{ub:u_L^p} and~(\ref{ub:u_L^2}) we infer that
$u_\veps\cdot\nabla\rho_0$ is uniformly bounded, for any $T>0$, in the space $L^2_T(L^2)$ and, by Sobolev embeddings, also in~$L^2_T(L^q)$ for all $2\leq q<+\infty$.
Therefore, since $\div u_\veps\,=\,0$, we deduce the uniform bounds
\begin{equation} \label{ub:s_L^p}
\bigl(s_\veps\bigr)_\veps\;\subset\;L^\infty\bigl([0,T];L^{2}(\Omega)\cap L^\infty(\Omega)\bigr)
\end{equation}
for all $T>0$. 

\begin{rem} \label{r:r-s}
Remark that $s_\veps\,\equiv\,\veps\,r_\veps$ in the case $\rho_0\equiv1$; similar uniform bounds have already been established above for the $r_\veps$'s, see \eqref{est:1_dens-L^p}
and Remark \ref{r:r_strong}.
\end{rem}

Furthermore, writing $ \d_ts_\veps\,=\,-\,\div\bigl(\rho_\veps\,u_\veps\bigr)$, we deduce that $\bigl(s_\veps\bigr)_\veps\,\subset\,W^{1,\infty}_T\bigl(H^{-1}(\Omega)\bigr)$.
Therefore, on the one hand, arguing exactly as in the last part of Paragraph \ref{sss:b-vel}, we can establish strong convergence properties for $s_\veps\,\longrightarrow\,0$
analogous to \eqref{cv:strong-rho} (keep in mind also Proposition \ref{p:constr}). 
On the other hand, we gather in particular the uniform embeddings
\begin{equation} \label{ub:s-C^g}
\bigl(s_\veps\bigr)_\veps\,\subset\,\mc C^{0,\g}\bigl([0,T];H^{-\g}(\Omega)\bigr)
\end{equation}
for all $\g\in[0,1]$.

Let us now define
\begin{equation} \label{def:svf}
\sigma_\veps\,:=\,\frac{1}{\veps}\,s_\veps\;,\qquad V_\veps\,:=\,\rho_\veps\,u_\veps\qquad\mbox{ and }\qquad
f_\veps\,:=\,-\,\div\bigl(\rho_\veps\,u_\veps\otimes u_\veps\bigr)\,+\,\nu\,\Delta u_\veps\,.
\end{equation}
We recall that, by uniform bounds, we have $\bigl(V_\veps\bigr)_\veps\,\subset\,L^\infty_T(L^2)\cap L^2_T(L^q)$ for all $2\leq q<+\infty$.
In the same way, we have $\rho_\veps\,u_\veps\otimes u_\veps$ uniformly bounded in $L^2_T(L^a)$, with $1/a\,=\,1/2+1/q$ and hence $a\in[1,2[\,$. 
By duality of Sobolev embeddings, we have that~$L^a\,\hra\,H^{-\alpha}$, where $\alpha\,=1-2/a'$ has to belong to $[0,1[\,$,  and $1/a+1/a'=1$. Hence we find
$\alpha\,=\,2\bigl(1/a\,-\,1/2\bigr)$ for all $a\in\,]1,2[\,$, that is to say we can write $\alpha\,=\,2/q$ with $2<q<+\infty$.
From this 
we deduce that
\begin{equation} \label{ub:f_e}
\bigl(f_\veps\bigr)_\veps\,\subset\,L^2\bigl([0,T];H^{-1-\alpha}(\Omega)\bigr)
\end{equation}
uniformly in $\veps\in\,]0,1]$, for any $\alpha\in]0,1[\,$.

\begin{rem} \label{r:ub-sigma_e}
By \eqref{ub:s_L^p}, we know that, for each $\veps$, $\sigma_\veps$ belongs to $L^\infty_T(L^{2}\cap L^\infty)$, but \textsl{a priori} we have \emph{no uniform bounds} at our disposal
for these quantities.
\end{rem}

With the previous notation, system \eqref{eq:dd-NSC} can be rewritten as
\begin{equation} \label{eq:waves}
\left\{\begin{array}{l}
        \veps\,\d_t\s_\veps\,+\,\div V_\veps\,=\,0 \\[1ex]
        \veps\,\d_tV_\veps\,+\,\nabla\Pi_\veps\,+\,V^\perp_\veps\,=\,\veps\,f_\veps\,,
       \end{array}
\right.
\end{equation}
with initial data $(\sigma_\veps)_{|t=0}\,=\,r_{0,\veps}\,\in\,L^2\cap L^\infty$ and $(V_\veps)_{|t=0}\,=\,m_{0,\veps}\,\in\,L^2$.
Taking the $\curl$ of the second equation in the sense of distributions, and then computing the difference with the first one, we arrive at the relation
\begin{equation} \label{eq:vort_form}
\d_t\bigl(\eta_\veps\,-\,\s_\veps\bigr)\,=\,\curl f_\veps\,,
\end{equation}
where we have set $\eta_\veps\,:=\,\curl(V_\veps)\,=\,\d_1V^2_\veps\,-\,\d_2V^1_\veps$. In view of \eqref{ub:f_e} and the properties on the initial data, we discover that
$\bigl(\eta_\veps-\s_\veps\bigr)_\veps$ is uniformly bounded in $\mc{C}^{0,1/2}_T(H^{-2-\alpha})$, and therefore, since by definition $\bigl(\eta_\veps\bigr)_\veps\subset L^\infty_T(H^{-1})$,
\begin{equation} \label{ub:sigma_e}
\bigl(\s_\veps\bigr)_\veps\,\subset\,L^\infty_T(H^{-2-\delta})
\end{equation}
uniformly in $\veps$, for all $\delta\in\,]0,1[\,$. In particular, there exists a $\sigma\,\in\,L^\infty_{\rm loc}\bigl(\R_+;H^{-2-\delta}(\Omega)\bigr)$ such
that~$\sigma_\veps\,\stackrel{*}{\rightharpoonup}\,\sigma$ in this space.

\begin{rem} \label{r:sigma_e}
Property \eqref{ub:sigma_e} is remarkable, because it establishes uniform bounds (even though in very negative spaces) for 
$\bigl(\rho_\veps-\rho_0\bigr)/\veps$. This uniform control is not obvious in the fully non-homogeneous case, if one just looks at the mass equation (pure transport);
to find it, we have used rather deeply the structure of our system.
\end{rem}

We are now going to exploit further this feature, and to establish a geometric property for the solutions to~\eqref{eq:dd-NSC}.

\subsubsection{A geometric property for the velocity fields} \label{sss:dispersion}

In this section we want to derive another remarkable feature of the dynamics.
Namely, dividing equation \eqref{eq:s_e} by $\veps$ we can formally write
$$
\d_t\s_\veps\,+\,\mbox{div} ( \s_\veps\, u_\veps )\,=\,-\,\frac{1}{\veps}\,u_\veps\cdot\nabla\rho_0\,.
$$
Thus, we somehow expect that $u_\veps\cdot\nabla\rho_0\,\longrightarrow\,0$ (in some sense) for $\veps\ra0$, with rate $O(\veps)$.
This is a strong geometric property for the family of $u_\veps$'s, which asymptotically align along the direction given by $\nabla^\perp\rho_0$ (keep also in mind
Proposition \ref{p:constr}), with some rate of convergence.
Unluckily, our bounds are not good enough to deduce that $\mbox{div} ( \s_\veps u_\veps )$ is uniformly controlled in a suitable space (keep in mind Remark \ref{r:ub-sigma_e} and
property \eqref{ub:sigma_e} above). Nonetheless, we are able to establish the following proposition, which will make this result a little more quantitative.
\begin{prop} \label{p:s-uniform}
Given~$\g\in\,]0,1[\,$, there exist
$$
0<\beta<\g\,,\qquad\g<k<1\,\qquad\mbox{ and }\qquad\theta\in\,]0,1[
$$
such that the uniform embeddings
$$ 
\left(\frac{1}{\veps^\theta}\;s_\veps\right)_\veps\;\subset\;\mc C^{0,\beta}\bigl([0,T];H^{-k}(\Omega)\bigr)\qquad\mbox{ and }\qquad
\left(\frac{1}{\veps^\theta}\;s_\veps\,u_\veps\right)_\veps\;\subset\;L^2\bigl([0,T];H^{-k-\delta}(\Omega)\bigr)
$$ 
hold true for all $\delta>0$ arbitrarily small.

In particular, $\bigl(\veps^{-\theta}\,s_\veps\bigr)_\veps\,\ra\,0$ (strong convergence) in $L^\infty\bigl([0,T];H^{-k-\delta}_{\rm loc}(\Omega)\bigr)$
and $\veps^{-\theta}\,s_\veps\,u_\veps\,\rightharpoonup\,0$ (weak convergence) in $L^2\bigl([0,T];H^{-k-\delta}_{\rm loc}(\Omega)\bigr)$, for all $\delta>0$.
\end{prop}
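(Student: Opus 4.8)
The plan is to obtain both embeddings purely by interpolation, playing against each other the two kinds of information we already have on $s_\veps$: on the one hand the \emph{order-one} but low-regularity bounds \eqref{ub:s_L^p}--\eqref{ub:s-C^g}, and on the other hand the \emph{order-}$\veps$ bound hidden in \eqref{ub:sigma_e}. Indeed, recalling $\sigma_\veps=\veps^{-1}s_\veps$ from \eqref{def:svf}, inequality \eqref{ub:sigma_e} says precisely that $s_\veps=\veps\,\sigma_\veps$ is of size $O(\veps)$ in $L^\infty_T(H^{-2-\delta})$, for each fixed $\delta\in\,]0,1[\,$. Any gain of a power of $\veps$ must be paid by moving to a more negative Sobolev index, and the whole content of the statement is to quantify this trade-off.

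For the first embedding I would fix $\gamma\in\,]0,1[\,$ and a small $\delta\in\,]0,1[\,$, and interpolate the bound \eqref{ub:s-C^g} (of size $O(1)$ in $\mc C^{0,\gamma}\bigl([0,T];H^{-\gamma}\bigr)$) against $\|s_\veps\|_{L^\infty_T(H^{-2-\delta})}\lesssim\veps$. The supremum-in-time part is a pointwise interpolation,
$$
\|s_\veps(t)\|_{H^{-k}}\,\lesssim\,\|s_\veps(t)\|_{H^{-\gamma}}^{1-\lambda}\,\|s_\veps(t)\|_{H^{-2-\delta}}^{\lambda}\,\lesssim\,\veps^{\lambda}\,,
$$
while for the Hölder seminorm one interpolates the increments $s_\veps(t)-s_\veps(t')$ in the same two spaces (both endpoints being $O(\veps)$ in $H^{-2-\delta}$), which yields
$$
\|s_\veps(t)-s_\veps(t')\|_{H^{-k}}\,\lesssim\,\bigl(|t-t'|^{\gamma}\bigr)^{1-\lambda}\,\veps^{\lambda}\,,
$$
where $k=\gamma+\lambda\,(2+\delta-\gamma)$ is the interpolated index. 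Dividing by $\veps^{\lambda}$ gives the first claim with $\theta:=\lambda$, $\beta:=\gamma(1-\lambda)$ and $k:=\gamma+\lambda(2+\delta-\gamma)$. The constraints $0<\beta<\gamma$, $\theta\in\,]0,1[\,$ and $\gamma<k$ hold for any $\lambda\in\,]0,1[\,$, whereas $k<1$ amounts to $\lambda<(1-\gamma)/(2+\delta-\gamma)$; since $\gamma<1$ the right-hand side is positive and such a $\lambda$ exists.

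To handle the product I would combine the bound $\bigl(\veps^{-\theta}s_\veps\bigr)_\veps\subset L^\infty_T(H^{-k})$ just obtained with $(u_\veps)_\veps\subset L^2_T(H^1)$ (see Subsection~\ref{ss:bounds}). Because $k<1$, the sum of the regularity indices $-k$ and $1$ is positive, so the two-dimensional product rule (Corollary~\ref{c:product}) gives $H^{-k}\cdot H^1\hra H^{-k-\delta}$ for every $\delta>0$, with $\bigl\|\veps^{-\theta}s_\veps(t)\,u_\veps(t)\bigr\|_{H^{-k-\delta}}\lesssim\bigl\|\veps^{-\theta}s_\veps(t)\bigr\|_{H^{-k}}\|u_\veps(t)\|_{H^1}$, whose right-hand side lies in $L^2_T$; this is the second uniform embedding. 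For the ``in particular'' part, writing $\veps^{-\theta}s_\veps=\veps^{1-\theta}\sigma_\veps$ and using $1-\theta>0$ together with \eqref{ub:sigma_e} shows $\veps^{-\theta}s_\veps\to0$ in $L^\infty_T(H^{-2-\delta})$; interpolating this convergence against the uniform $L^\infty_T(H^{-k})$ bound gives strong convergence to $0$ in $L^\infty_T(H^{-k-\delta}_{\rm loc})$. Finally $\veps^{-\theta}s_\veps\,u_\veps\rightharpoonup0$ follows from this strong convergence and $u_\veps\rightharpoonup u$ in $L^2_T(H^1)$, exactly as in Lemma~\ref{l:rho-u}, through Corollary~\ref{c:product}~(ii).

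The delicate point is the simultaneous requirement $\theta>0$ and $k<1$: the first says we genuinely gain a power of $\veps$, the second is precisely the threshold that lets the product $s_\veps u_\veps$ be paired against $u_\veps\in H^1$ in dimension two. The interpolation window $\gamma<k<1$ is nonempty only because $\gamma<1$, i.e. because \eqref{ub:s-C^g} is available with a Hölder/Sobolev exponent strictly below the critical value $1$; this is what ultimately powers the whole argument, and identifying \eqref{ub:sigma_e} as the right ``small'' endpoint (rather than the naive transport bound, cf. Remark~\ref{r:sigma_e}) is the one nonroutine input.
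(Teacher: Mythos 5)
Your proposal is correct and follows essentially the same route as the paper: an interpolation between the $O(1)$ bound \eqref{ub:s-C^g} in $\mc C^{0,\g}_T(H^{-\g})$ and the $O(\veps)$ bound \eqref{ub:sigma_e} in $L^\infty_T(H^{-2-\delta})$ (applied to the increments $s_\veps(t)-s_\veps(t')$, with the same exponent relations $\beta=(1-\theta)\g$ and $k=\theta(2+\delta)+(1-\theta)\g$), followed by the product rules of Corollary \ref{c:product} and a strong-times-weak argument for the product. The only, harmless, variant is the final step: where the paper deduces the strong convergence of $\bigl(\veps^{-\theta}s_\veps\bigr)_\veps$ by Ascoli--Arzel\`a compactness, you obtain it directly and quantitatively by interpolating the uniform $L^\infty_T(H^{-k})$ bound against the $O(\veps^{1-\theta})$ decay of $\veps^{1-\theta}\s_\veps$ in $L^\infty_T(H^{-2-\delta})$, which even yields convergence in the global (not just local) space with an explicit rate.
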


\begin{proof}
We start the proof by recalling that, thanks to~\eqref{ub:s-C^g}, $\bigl(s_\veps\bigr)_\veps\,\subset\,\mc C^{0,\g}_T\bigl(H^{-\g}(\Omega)\bigr)$ for all $\g\in[0,1]$,
while, by~\eqref{ub:sigma_e},~$\bigl(\s_\veps\bigr)_\veps\,\subset\,L^\infty_T(H^{-2-\delta})$ for $\delta>0$ arbitrarily small, where $\s_\veps\,=\,s_\veps/\veps$.

Then, by interpolation of the previous uniform bounds, it easy to find (just estimate the difference $\|s_\veps(t)-s_\veps(\tau)\|_{H^{-k}}$ for $0\leq\tau<t\leq T$)
$$
\|s_\veps\|_{\mc C^{0,\beta}_T(H^{-k})}\,\leq\,C\,\|s_\veps\|_{L^\infty_T(H^{-2-\lam})}^\theta\;\|s_\veps\|^{1-\theta}_{\mc C^{0,\g}_T(H^{-\g})}\,,
$$
under the conditions $\beta\,=\,(1-\theta)\g$ and $k\,=\,\theta(2+\lam)\,+\,(1-\theta)\g$, for some $\theta\in\,]0,1[\,$.
For instance, one can make the explicit choices $\g=1/2$, $\lam=1/n$ for some $n\in\N$ and $k=3/4$, and determine consequently also the values of $\theta$ and $\beta$.
The important point here is that there exist $0<\beta<\g$ and $\g<k<1$ and a suitable corresponding $\theta\in\,]0,1[\,$, for which one has
$$
\veps^{1-\theta}\,\s_\veps\,=\, {\veps^{-\theta}}\,s_\veps\quad\in\;\mc C^{0,\beta}\bigl([0,T];H^{-k}(\Omega)\bigr)\,,
$$
and this property holds uniformly for $0<\veps\leq1$. In particular, by the Ascoli-Arzel\`a theorem, we deduce that this family is compact
in e.g. $L^\infty\bigl([0,T];H^{-k-\lam}_{\rm loc}(\Omega)\bigr)$, for all $\lam>0$, and hence it  converges strongly to $0$ in this space.

Next, let us remark that, by Corollary \ref{c:product} (iii) of the Appendix, the product is continuous on $H^{-k}\times H^1$, with values in $H^{-k-\delta}$, for arbitrarily small $\delta>0$.
By these considerations, we immediately see that the product $\veps^{-\theta}\,u_\veps\,s_\veps$ is well-defined and uniformly bounded in $L^2_T(H^{-k-\delta})$ with respect to $\veps$, for all
$\delta>0$:
$$ 
\bigl({\veps^{-\theta}}\;s_\veps\,u_\veps \bigr)_\veps\;\subset\;L^2\bigl([0,T];H^{-k-\delta}(\Omega)\bigr)\,.
$$ 
Furthermore, $\veps^{-\theta}\,s_\veps\,u_\veps\,\rightharpoonup\,0$ in the space $L^2_T\bigl(H^{-k-\delta}_{\rm loc}(\Omega)\bigr)$, by the strong convergence property established above
on $\bigl(\veps^{-\theta}\,s_\veps\bigr)_\veps$.
\end{proof}

Let us end the discussion of this paragraph by pointing out further interesting properties, which however reveal to be not strong enough to be used in the analysis of the following sections.

Thanks to the properties established above, it makes sense (for instance in $\mc{D}'$) to write the equation
$$ 
\d_t\left(\veps^{-\theta}\,s_\veps\right)\,+\,
\mbox{div} \left( \veps^{-\theta}\,s_\veps u_\veps \right)
 \,=\,-\, {\veps^{-\theta}}\,u_\veps\cdot\nabla\rho_0\,,
$$ 
which has to be interpreted in a weak sense: for all test functions $\phi\in\mc{D}\bigl([0,T[\,\times\Omega\bigr)$, one has
\begin{equation} \label{eq:disp-w}
-\int^T_0\!\!\int_\Omega\veps^{-\theta}\,s_\veps\,\d_t\phi\,-\int^T_0\!\!\int_\Omega \veps^{-\theta}\,s_\veps\,u_\veps\cdot\nabla\phi\,=\,
\int_\Omega \veps^{1-\theta}\,r_{0,\veps}\,\phi(0)\,-\int^T_0\!\!\int_\Omega {\veps^{-\theta}}\,\rho_0\,u_\veps\cdot\nabla\phi\,.
\end{equation}
By density and the uniform bounds on $\veps^{-\theta}\,s_\veps$ established above, we notice that this expression makes sense and is continuous actually for all
$\phi\,\in\,W^{1,1}_T(H^{k+\delta+1})$, for any $\delta>0$: regularity in time comes from the first term (notice that we are losing the gain of $\beta$ derivatives
in time for~$\veps^{-\theta}\,s_\veps$), regularity in space (i.e. $k+\delta+1$) from the second one in the left-hand side.
Therefore we deduce the uniform bound 
\begin{equation} \label{ub:u-r_0} 
\left({\veps^{-\theta}}\,u_\veps\cdot\nabla\rho_0\right)_\veps\;\subset\;W^{-1,\infty}_T(H^{-k})\,+\,L^2_T(H^{-k-\delta-1})\;\hookrightarrow\;
W^{-1,\infty}\bigl([0,T];H^{-k-\delta-1}(\Omega)\bigr)\,,
\end{equation}
for all $\delta>0$ arbitrarily small.
Actually, this term has to  converge weakly to $0$ in the previous space, because all the other terms in \eqref{eq:disp-w} go to $0$ in $\mc{D}'$.
So, we have gained a ``dispersion'' of order~$\veps^\theta$ (for some $\theta>0$) for the quantity $\bigl(u_\veps\cdot\nabla\rho_0\bigr)_\veps$.

We will not use directly this property in the convergence proof (for which we refer to Section~\ref{s:dens}), but rather the uniform controls provided by Proposition \ref{p:s-uniform}.

\bigskip

As it may appear clear from the discussion of this subsection, the main difficulty in the analysis when
$\rho_0$ is non-constant relies on the fact that we do not have at hand an explicit equation for the density oscillations $\s_\veps$,
nor do we have good enough uniform estimates for them: we are forced to use the vorticity associated with the velocity fields, and we get bounds in very rough spaces.

We will see in Section \ref{s:dens} how to handle these obstructions. For the time being, let us show the convergence in the slightly non-homogeneous case.

\section{Combining homogeneous and fast rotation limits} \label{s:hom}

We consider here the case $\rho_0\,\equiv\,1$, and we complete the proof of Theorem \ref{t:hom}: namely, we prove a convergence result in the weak formulation of
equations \eqref{eq:dd-NSC} and we identify the target system. We point out that, in this case, our fast rotation limit combines with a homogeneous limit,
because~$\rho_\veps\longrightarrow1$ in the limit $\veps\ra0$.

\subsection{Preliminary convergence properties} \label{ss:prelim-conv}
When $\rho_0\equiv1$, we have an explicit equation \eqref{eq:r_e} for the density oscillations $r_\veps:=(\rho_\veps-1)/\veps$, from which we have
deduced also suitable uniform bounds \eqref{est:1_dens-L^p}.
Moreover, in   light of Remark \ref{r:r_strong}, one can argue exactly as in
Lemma \ref{l:rho-u} and easily pass to the limit in the weak formulation of equation~\eqref{eq:r_e}. Thus, for $\veps$ going to $0$, we find
$$
\d_tr\,+\,
\mbox{div} (r\, u )
 \,=\,0\;,\qquad\qquad\mbox{ with }\qquad r_{|t=0}\,=\,r_0\,,
$$
where $r_0$ is the limit point of the sequence $\bigl(r_{0,\veps}\bigr)_\veps$ identified in \eqref{eq:conv-initial}.

Let us now consider the momentum equation. For $T>0$ fixed, let us consider a smooth function~$\psi\in\mc{D}\bigl([0,T[\,\times\Omega\bigr)$ such that $\div\psi=0$.
Using it in \eqref{eq:weak-momentum} we find
$$ 
\int^T_0\!\!\int_{\Omega}\biggl(-\rho_\veps u_\veps\cdot\d_t\psi\,-\,\rho_\veps u_\veps\otimes u_\veps:\nabla\psi\,+\,\frac{1}{\veps}\,\rho_\veps u_\veps^\perp\cdot\psi\,+\,
\nu\nabla u_\veps:\nabla\psi\biggr)\,dx\,dt\,=\,\int_{\Omega}\rho_{0,\veps}u_{0,\veps}\cdot\psi(0)\,dx\,.
$$ 

Decomposing $\rho_\veps\,=\,1\,+\,\veps\,r_\veps$ and making use of the uniform bounds established in Subsection~\ref{ss:bounds}, it is an easy matter to pass to the limit in the $\d_t$ term
and in the viscosity term. Obviously, thanks to the assumptions on the initial data and especially properties \eqref{eq:conv-initial}, we have that
$\rho_{0,\veps}\,u_{0,\veps}\,\rightharpoonup\,u_0$ in $L^2$.

As for the Coriolis term, we can write
$$
\frac{1}{\veps}\int^T_0\int_\Omega\rho_\veps\,u_\veps^\perp\cdot\psi\,=\,\frac{1}{\veps}\int^T_0\int_\Omega u_\veps^\perp\cdot\psi\,+\,\int^T_0\int_\Omega r_\veps\,u_\veps^\perp\cdot\psi\,.
$$
Now, keeping in mind Remark \ref{r:constr-1}, we have that $\psi\,=\,\nabla^\perp\phi$: hence, the first term on the right-hand side identically vanishes (since $u_\veps$ is divergence-free),
while the second converges, in view of Lemma \ref{l:rho-u} again. In the end, we find
$$
\frac{1}{\veps}\int^T_0\int_\Omega\rho_\veps\,u_\veps^\perp\cdot\psi\,\longrightarrow\,\int^T_0\int_\Omega r\,u^\perp\cdot\psi\,.
$$
Notice that, alternatively, we could have used equation \eqref{eq:r_e} tested against $\phi$, to arrive at the relation
$\displaystyle \veps^{-1}\int^T_0\!\!\int_\Omega\rho_\veps u_\veps^\perp\cdot\psi\,=\,-\int^T_0\!\!\int_\Omega r_\veps\d_t\phi$.
This would have led us to exploit the vorticity formulation of the momentum equation, see also Remark \ref{r:1_vort}.

\medskip

It remains   to study the convergence of the convective term $\rho_\veps\,u_\veps\otimes u_\veps$.

We proceed in three steps: first of all, since we are in the slightly non-homogeneous regime,
we reduce our study to the constant density case. Then  we approximate the velocities by smooth vector fields, which
verify the same system up to small perturbations. These two steps are carried out in the next paragraph. Finally, in the third step  we use a compensated compactness argument, as in \cite{G-SR_2006}:
we perform integrations by parts (which are possible because we are dealing now with smooth vector fields) and, using the structure of the equations, we are finally able to pass to the limit.

\subsection{Some approximation lemmas} \label{ss:approx}
The following result is an obvious consequence of the fact that~$\rho_\veps\,=\,1\,+\,\veps\,r_\veps$,  along with the uniform bounds for $(r_\veps)_\veps$ in $L^\infty_T(L^\infty)$,
given by \eqref{est:1_dens-L^p}, combined with uniform bounds for $(u_\veps)_\veps$ in e.g. $L^\infty_T(L^2)$, see relation \eqref{ub:u_L^p} above.
\begin{lemma} \label{l:1_conv}
For any test function $\psi\,\in\,\mc{D}\bigl([0,T[\,\times\Omega\bigr)$, one has
$$
\lim_{\veps\ra0^+}\left|\int^T_0\int_\Omega\rho_\veps u_\veps\otimes u_\veps:\nabla\psi\,dx\,dt\,-\,\int^T_0\int_\Omega u_\veps\otimes u_\veps:\nabla\psi\,dx\,dt\right|\,=\,0\,.
$$
\end{lemma}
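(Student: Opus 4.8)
The plan is to exploit directly the ansatz $\rho_\veps=1+\veps\,r_\veps$, valid in the slightly non-homogeneous regime, which turns the difference of the two integrals into an explicit remainder of size $O(\veps)$. First I would write
\[
\int^T_0\!\!\int_\Omega\rho_\veps\,u_\veps\otimes u_\veps:\nabla\psi\,dx\,dt\,-\,\int^T_0\!\!\int_\Omega u_\veps\otimes u_\veps:\nabla\psi\,dx\,dt\,=\,\veps\int^T_0\!\!\int_\Omega r_\veps\,u_\veps\otimes u_\veps:\nabla\psi\,dx\,dt\,,
\]
so that the whole matter reduces to showing that the last integral stays bounded uniformly in $\veps$.

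Next I would estimate that integral in absolute value, using the pointwise bound $|u_\veps\otimes u_\veps:\nabla\psi|\leq C\,|u_\veps|^2\,\|\nabla\psi\|_{L^\infty}$ and pulling out the sup norm of $r_\veps$:
\[
\left|\int^T_0\!\!\int_\Omega r_\veps\,u_\veps\otimes u_\veps:\nabla\psi\,dx\,dt\right|\,\leq\,C\,\|r_\veps\|_{L^\infty_T(L^\infty)}\,\|\nabla\psi\|_{L^\infty([0,T]\times\Omega)}\int^T_0\!\!\int_{\Omega}|u_\veps|^2\,dx\,dt\,.
\]
Here $\nabla\psi$ is bounded with compact support, since $\psi\in\mc D\bigl([0,T[\,\times\Omega\bigr)$. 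The factor $\|r_\veps\|_{L^\infty_T(L^\infty)}$ is controlled uniformly in $\veps$ by \eqref{est:1_dens-L^p}, while the uniform bound $\bigl(u_\veps\bigr)_\veps\subset L^\infty_T(L^2)$ from \eqref{ub:u_L^p} yields $\int^T_0\|u_\veps(t)\|_{L^2}^2\,dt\leq C\,T$; hence the right-hand side above is bounded by a constant independent of $\veps$.

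Combining the two displays, the difference under examination is bounded by $C\,\veps$, and therefore tends to $0$ as $\veps\ra0^+$, which is exactly the claim. There is no genuine obstacle in this argument: the only point worth emphasizing is that the mere $L^\infty_T(L^2)$ control of $u_\veps$ (rather than any Sobolev regularity) already suffices once it is paired with the $L^\infty$ bound on $r_\veps$, so that neither compactness nor the structure of the equations is needed at this stage. This is precisely why the statement is recorded as an elementary reduction, preparing the ground for the genuine difficulty, namely the compensated compactness treatment of $u_\veps\otimes u_\veps$ in the subsequent step.
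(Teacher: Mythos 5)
Your argument is correct and coincides with the paper's own (one-line) justification: factor out $\rho_\veps-1=\veps\,r_\veps$, then use the uniform $L^\infty_T(L^\infty)$ bound \eqref{est:1_dens-L^p} on $r_\veps$, the uniform $L^\infty_T(L^2)$ control of $u_\veps$ (which in this vacuum-free regime, where $\rho_\veps$ is bounded below, follows from \eqref{ub:u_L^p}), and the boundedness of $\nabla\psi$, to conclude the difference is $O(\veps)$.
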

To perform the second step, for any $M\in\N$ we introduce the vector fields $u_{\veps,M}\,:=\,S_Mu_\veps$, where $S_M$ is the low frequency cut-off operator
defined by relation \eqref{eq:S_j} in the Appendix. Notice that, for any fixed $M$, we have
\begin{equation} \label{ub:u_e-M}
\bigl\|u_{\veps,M}\bigr\|_{L^\infty_T(H^k)\cap L^2_T(H^{k+1})}\,\leq\,C(T,k,M)\,,
\end{equation}
for any $T>0$ and any $k>0$. The constant $C(T,k,M)$ depends   on~$T$, $k$ and $M$, but   is uniform in $\veps>0$.
\begin{lemma} \label{l:uxu_conv}
For any test function $\psi\,\in\,\mc{D}\bigl([0,T[\,\times\Omega\bigr)$, one has
$$
\lim_{M\ra+\infty}\;\limsup_{\veps\ra0^+}\;
\left|\int^T_0\int_\Omega u_\veps\otimes u_\veps:\nabla\psi\,dx\,dt\,-\,\int^T_0\int_\Omega u_{\veps,M}\otimes u_{\veps,M}:\nabla\psi\,dx\,dt\right|\,=\,0\,.
$$
\end{lemma}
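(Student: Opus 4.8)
The plan is to prove Lemma \ref{l:uxu_conv} by writing the difference of the two quadratic forms as a sum of bilinear terms and controlling each of them uniformly in $\veps$ by a quantity that tends to $0$ as $M\to+\infty$. Concretely, setting $w_{\veps,M}:=u_\veps-u_{\veps,M}=(\Id-S_M)u_\veps$, I would use the algebraic identity
\begin{equation*}
u_\veps\otimes u_\veps\,-\,u_{\veps,M}\otimes u_{\veps,M}\,=\,w_{\veps,M}\otimes u_\veps\,+\,u_{\veps,M}\otimes w_{\veps,M}\,,
\end{equation*}
so that the integral to estimate splits into two pieces, each involving the high-frequency remainder $w_{\veps,M}$ paired against a factor ($u_\veps$ or $u_{\veps,M}$) that is uniformly bounded in the energy space. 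Since $\psi$ is a fixed smooth compactly supported test function, $\nabla\psi$ lives in every $L^p$, which gives room to absorb the integrability.

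The key estimate is then a bound on $w_{\veps,M}$ that is uniform in $\veps$ and small in $M$. First I would recall the uniform bounds of Subsection \ref{ss:bounds}: $(u_\veps)_\veps\subset L^2_T(H^1)$ and $(u_\veps)_\veps\subset L^\infty_T(L^2)$ (the latter via \eqref{ub:u_L^p} together with the absence of vacuum, or directly in the slightly non-homogeneous regime relevant here). By the properties of the Littlewood-Paley cut-off $S_M$ (see the Appendix), the operator $\Id-S_M$ gains regularity on high frequencies: one has, for instance,
\begin{equation*}
\bigl\|(\Id-S_M)u_\veps\bigr\|_{L^2_T(L^2)}\,\leq\,C\,2^{-M}\,\bigl\|u_\veps\bigr\|_{L^2_T(H^1)}\,,
\end{equation*}
which is bounded by $C\,2^{-M}$ uniformly in $\veps$. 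Thus in the two bilinear terms I would estimate the remainder factor $w_{\veps,M}$ in $L^2_T(L^2)$ by $C\,2^{-M}$, the companion factor $u_\veps$ (resp. $u_{\veps,M}$) in $L^2_T(L^2)$ or $L^\infty_T(L^2)$ uniformly in $\veps$ and $M$ by the energy bounds, and the smooth $\nabla\psi$ in the remaining dual norm. Taking $\limsup_{\veps\to0}$ leaves a bound of the form $C(\psi,T)\,2^{-M}$, and letting $M\to+\infty$ yields the claim.

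The main obstacle to watch for is making the pairing of the three factors integrable without appealing to any $\veps$-dependent quantity. A clean way is to use H\"older with exponents dictated by the Gagliardo-Nirenberg inequality (Proposition \ref{p:Gagl-Nir}): in two dimensions $u_\veps\in L^2_T(H^1)$ embeds into $L^2_T(L^q)$ for all $q<+\infty$ and, by interpolation with $L^\infty_T(L^2)$, into $L^4_T(L^4)$ with a norm bounded uniformly in $\veps$. This makes the product $u_\veps\otimes u_\veps$ uniformly bounded in $L^2_T(L^2)$, so that pairing the remainder $w_{\veps,M}\in L^2_T(L^2)$ (small in $M$) against a factor in $L^\infty_T(L^\infty)$-type test-function norms is straightforward; alternatively one estimates each bilinear piece directly as $\|w_{\veps,M}\|_{L^4_T(L^4)}\,\|u_\veps\|_{L^4_T(L^4)}\,\|\nabla\psi\|_{L^2_T(L^2)}$ and uses that $(\Id-S_M)$ also gains a fractional power on the interpolated norm, so $\|w_{\veps,M}\|_{L^4_T(L^4)}\to0$ as $M\to+\infty$ uniformly in $\veps$. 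Either route works; the only care needed is to ensure every norm invoked is one of the \emph{uniform-in-$\veps$} bounds already established, and that the frequency-localization smallness is quantified by a negative power of $2^M$ so the double limit collapses as required.
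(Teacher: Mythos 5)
Your proposal is correct and follows essentially the same route as the paper: the same bilinear splitting $u_\veps\otimes u_\veps-u_{\veps,M}\otimes u_{\veps,M}=(\Id-S_M)u_\veps\otimes u_\veps+u_{\veps,M}\otimes(\Id-S_M)u_\veps$, with the high-frequency remainder made small uniformly in $\veps$ via the uniform $L^2_T(H^1)$ bound (the paper states the smallness as $\bigl\|(\Id-S_M)u_\veps\bigr\|_{L^2_T(H^{1-\delta})}\leq C\,2^{-\delta M}$, your $L^2_T(L^2)$ version with gain $2^{-M}$ is an equally valid variant), paired against the energy bounds for the companion factor and the smooth compactly supported $\nabla\psi$. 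Your primary H\"older pairing suffices as written; only the alternative route would need care, since uniform smallness of $\bigl\|(\Id-S_M)u_\veps\bigr\|_{L^4_T(L^4)}$ does not follow directly from interpolating the available bounds (smallness in $L^2_T(L^4)$ via $H^{1/2}\hookrightarrow L^4$ is the clean statement), but this variant is not needed for the proof.
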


\begin{proof}
We start by estimating (see also \eqref{eq:LP-Sob} and Lemma \ref{l:Id-S} below), for any $\delta>0$, the difference
\begin{equation} \label{est:1-S_M-u}
\left\|(\Id\,-\,S_M)u_\veps\right\|_{L^2_T(H^{1-\delta})}\,\leq\,2^{-\delta M}\,\left\|u_\veps\right\|_{L^2_T(H^1)}\,\leq\,
C\,2^{-\delta M}\,,
\end{equation}
for a constant $C>0$, uniform in $\veps$.
Therefore, keeping in mind also bounds \eqref{ub:u_L^p} for the family of velocity fields~$(u_\veps)_\veps$, we gather that both integrals
$$
I_1\,:=\,\int^T_0\int_\Omega (\Id-S_M)u_{\veps}\otimes u_{\veps}:\nabla\psi\qquad\mbox{ and }\qquad I_2\,:=\,\int^T_0\int_\Omega u_{\veps,M}\otimes (\Id-S_M)u_{\veps}:\nabla\psi 
$$
converge to $0$ for $M\ra+\infty$, uniformly with respect to the parameter $\veps\in\,]0,1]$.
This completes the proof of the statement.
\end{proof}

\subsection{The compensated compactness argument} \label{ss:comp-comp}
From now on, the argument is absolutely analogous to the one used in \cite{G-SR_2006}: let us sketch it for the reader's convenience.
Thanks to Lemmas \ref{l:1_conv} and \ref{l:uxu_conv}, we are reduced to studying, for any fixed~$M\in\N$, the convergence (with respect to $\veps$) of the integral
$$
-\int^T_0\int_\Omega u_{\veps,M}\otimes u_{\veps,M}:\nabla\psi\,dx\,dt\,=\,\int^T_0\int_\Omega\div\bigl(u_{\veps,M}\otimes u_{\veps,M}\bigr)\cdot\psi\,dx\,dt\,,
$$
where we have integrated by parts since now each $u_{\veps,M}$ is smooth in the space variable.
Remark that, since partial derivatives commute with operator $S_M$, we have $\div u_{\veps,M}=0$: then 
\begin{equation} \label{eq:convective}
\div\bigl(u_{\veps,M}\otimes u_{\veps,M}\bigr)\,=\,u_{\veps,M}\cdot\nabla u_{\veps,M}\,=\,\frac{1}{2}\,\nabla\left|u_{\veps,M}\right|^2\,+\,\omega_{\veps,M}\,u_{\veps,M}^\perp\,,
\end{equation}
where $\omega_{\veps,M}\,:=\,\curl u_{\veps,M}$ is the vorticity of $u_{\veps,M}$. Notice that the former term in the last equality  vanishes identically
when tested against any test function $\psi$ such that   $\div\psi=0$.

As for the vorticity term, we start by reformulating the momentum equation in \eqref{eq:dd-NSC} in a similar way to what  was done in Paragraph \ref{sss:good}. More precisely, it is equivalent
to write (still in the weak sense)
$$
\veps\,\d_tV_\veps\,+\,\nabla\Pi_\veps\,+\,u_\veps^\perp\,=\,\veps\,f_\veps\,+\,\veps\,g_\veps\,,
$$
where $V_\veps$ and $f_\veps$ are defined in \eqref{def:svf} and we have set $g_\veps\,:=\,-\,r_\veps\,u_\veps^\perp$. Remark that the family $(g_\veps)_\veps$ is uniformly bounded in $L^\infty_T(L^2)$.
Applying the operator $S_M$ to the previous equation we find
$$
\veps\,\d_tV_{\veps,M}\,+\,\nabla\Pi_{\veps,M}\,+\,u_{\veps,M}^\perp\,=\,\veps\,\wtilde{f}_{\veps,M}\,, \qquad\qquad\mbox{ with }\qquad \wtilde{f}_\veps\,:=\,f_\veps+g_\veps\,.
$$
We   point out that, by uniform bounds and relation \eqref{ub:f_e}, for any $M$ and $k$ fixed and for all $T>0$, one has
$$
\left\|V_{\veps,M}\right\|_{L^\infty_T(H^k)}\,+\,\left\|\wtilde{f}_{\veps,M}\right\|_{L^2_T(H^k)}\,\leq\,C(T,k,M)\,,
$$
where the constant $C(T,k,M)$ does not depend on $\veps$. Furthermore, taking the $\curl$ of the   equation  and denoting $\eta_{\veps,M}\,=\,\curl V_{\veps,M}$, we get
$$
\d_t\eta_{\veps,M}\,=\,\curl\wtilde{f}_{\veps,M}\,.
$$
This last relation tells us that, for any fixed $M\in\N$ and $k\in\R$, the family $\bigl(\eta_{\veps,M}\bigr)_\veps$ is compact (in~$\veps$) in e.g. $L^\infty_T(H^k_{\rm loc})$,
and thus it converges strongly (up to extraction of a subsequence) to a tempered distribution $\eta_M$ in this space. But since
we already know   the convergence $V_\veps\,\rightharpoonup\,u$ in e.g. $L^\infty_T(L^2)$, it follows that~$\eta_\veps\,\rightharpoonup\,\omega\,=\,\curl u$ in e.g. $\mc D'$, hence~$\eta_M\,\equiv\,\omega_M$.

Finally writing~$\rho_\veps\,=\,1\,+\,\veps\,r_\veps$, we have  the identity
$$
\eta_{\veps,M}\,=\,\curl S_M(V_{\veps})\,=\,\omega_{\veps,M}\,+\,\veps\,\curl S_M(r_\veps\,u_\veps)\,,
$$
where  for any $M$ and $k$, the family $\bigl(\curl S_M(r_\veps\,u_\veps)\bigr)_\veps$ is uniformly bounded in $L^\infty_T(H^k)$.
From this relation and the above analysis, we deduce the strong convergence (still up to an extraction)
$$
\omega_{\veps,M}\,\longrightarrow\,\omega_M\qquad\qquad\mbox{ in }\qquad L^\infty_T(H^k_{\rm loc})
$$
for $\veps\ra0$. Using this property in equation \eqref{eq:convective}, we see that, up to passing to a suitable subsequence, for any $T>0$ and
any $M\in\N$, we have the convergence
$$
\lim_{\veps\ra0}\int^T_0\int_\Omega u_{\veps,M}\otimes u_{\veps,M}:\nabla\psi\,dx\,dt\,=\,\int^T_0\int_\Omega \omega_{M}\,u^\perp_{M}\cdot\psi\,dx\,dt\,.
$$
On the other hand, by uniform bounds and arguing as in \eqref{est:1-S_M-u}, we have the strong convergence
~$u_M\,\longrightarrow\,u$ in $L^2_T(H^{1})$ in the limit for $M\ra+\infty$ (by Lemma \ref{l:Id-S} below and Lebesgue dominated convergence theorem). Therefore, performing
equalities \eqref{eq:convective} backwards we can pass to the limit also in $M$. In the end, putting these properties together with Lemmas \ref{l:1_conv} and \ref{l:uxu_conv}, we have shown that
$$
\int^T_0\int_\Omega\rho_\veps u_\veps\otimes u_\veps:\nabla\psi\,dx\,dt\;\longrightarrow\;\int^T_0\int_\Omega u\otimes u:\nabla\psi\,dx\,dt
$$
for $\veps\ra0$, for all smooth divergence-free test functions $\psi$.

\medbreak
In order to complete the proof of Theorem \ref{t:hom}, it remains us to show that the whole sequence~$\bigl(\rho_\veps,u_\veps\bigr)_\veps$ converge. This fact is a straightforward consequence
of a uniqueness property for the target system, which is established in the next subsection.

\subsection{Study of the limit system} \label{ss:hom_limit}

The analysis we have just carried out provides us with the existence of weak solutions to the target system \eqref{eq:hom_lim}. In this subsection we want   to establish a uniqueness
result for equations \eqref{eq:hom_lim}. In particular, this result will imply the convergence of the whole sequence in Theorem~\ref{t:hom}.

Let us remark that, in performing stability estimates, a loss of one derivative appears, due to the hyperbolicity of the ``density'' equation
(i.e. the equation for $r$); now, propagating regularity for the gradient of $r$ asks for additional smoothness on $\bigl(r_0,u_0\bigr)$. In this respect, although $r$ and $u$ are coupled
\textsl{via} a zero order term, the system looks very much like a $2$-D incompressible density-dependent Navier-Stokes equations, for which uniqueness can be established for more regular initial
data (we refer for instance to Theorem 3.41 of \cite{B-C-D} and to Proposition 5.1 of \cite{D_2004}).

For simplicity of the exposition, we will prove stability estimates in energy spaces, the extension to more general functional frameworks going beyond the scope of the present paper.

\medbreak
The main result of this subsection reads as follows.
\begin{thm} \label{t:hom_r-u}
Given $\beta>0$, let $r_0\,\in\,H^{1+\beta}(\Omega)$ and $u_0\,\in\,H^1(\Omega)$ such that $\div u_0=0$.

Then there exists a unique weak solution $(r,u)$ to system \eqref{eq:hom_lim} with initial datum $(r_0,u_0)$, which satisfies the following properties:
\begin{enumerate}[(i)]
 \item the density $r$ belongs to $\mc C\bigl(\R_+;H^{1+\g}(\Omega)\bigr)$ for all $0\leq\g<\beta$; 
 \item for all $T>0$, $u $ belongs to $\mc C\bigl([0,T];H^1(\Omega)\bigr)\,\cap\,L^2\bigl([0,T];H^2(\Omega)\bigr)$.
\end{enumerate}
\end{thm}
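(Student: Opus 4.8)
The plan is to split the statement into an existence-with-regularity part and a uniqueness part. Existence in the announced class will come from an approximation scheme combined with two \emph{a priori} estimates propagated in a bootstrap (parabolic smoothing for $u$, transport for $r$); uniqueness will follow from a stability estimate in a functional setting specifically engineered to absorb the loss of one derivative produced by the (hyperbolic) density equation. Throughout I would rely on the transport and product estimates recalled in the Appendix, as well as on the classical theory of two-dimensional density-dependent Navier--Stokes equations (\cite{B-C-D}, \cite{D_2004}).

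For existence I would regularise \eqref{eq:hom_lim} (Friedrichs truncation of the frequencies, or a Galerkin scheme) and propagate the two regularities separately. On the velocity side, the equation for $u$ is a forced two-dimensional Navier--Stokes system; testing it against $-\Delta u$ and using the Gagliardo--Nirenberg/Ladyzhenskaya inequality (Proposition~\ref{p:Gagl-Nir}) $\|v\|_{L^4}^2\le C\,\|v\|_{L^2}\|\nabla v\|_{L^2}$, together with the bound $r\in L^\infty_T(L^\infty)$ coming from the transport structure, yields through Gronwall the control $u\in L^\infty_T(H^1)\cap L^2_T(H^2)$. On the density side, since $\div u=0$ the mass equation reads $\d_tr+u\cdot\nabla r=0$, so the transport estimates give $\|r(t)\|_{H^{1+\g}}\le\|r_0\|_{H^{1+\g}}\exp\bigl(C\int_0^t\|\nabla u\|_{L^\infty}\bigr)$. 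To make $\int_0^T\|\nabla u\|_{L^\infty}$ finite I would bootstrap: the product laws in dimension two give $u\cdot\nabla u$ and $r\,u^\perp$ in $L^2_T(H^{\g})$, whence maximal regularity for the heat semigroup upgrades $u$ to $L^2_T(H^{2+\g})$, and then $\nabla u\in L^2_T(H^{1+\g})\hookrightarrow L^2_T(L^\infty)$ (here $1+\g>d/2=1$). This closes the propagation of $H^{1+\g}$ for every $\g<\beta$; passing to the limit and reading off time continuity from the equations ($u\in\mc C(H^1)$ from $\d_tu\in L^2_T(L^2)$, and $r\in\mc C(H^{1+\g})$ from the transport theory) gives a solution in the stated class.

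For uniqueness, given two solutions $(r_1,u_1)$ and $(r_2,u_2)$ with the same data, I set $\de r:=r_1-r_2$ and $\de u:=u_1-u_2$, which solve
\begin{equation*}
\d_t\de r + u_1\cdot\nabla\de r = -\,\div(r_2\,\de u)\,,\qquad
\d_t\de u + u_1\cdot\nabla\de u - \nu\,\Delta\de u + \nabla\de\Pi = -\,\de u\cdot\nabla u_2 - \de r\,u_1^\perp - r_2\,\de u^\perp\,,
\end{equation*}
with $\div\de u=0$ and zero initial data; the decisive point is that, because $\div\de u=0$, the source in the density equation is a \emph{divergence}. I would then estimate $\de u$ in the energy space, controlling $\|\de u\|_{L^\infty_T(L^2)}^2+\nu\|\nabla\de u\|_{L^2_T(L^2)}^2$, and measure $\de r$ in the \emph{negative} space $H^{-1}$, where $\|\div(r_2\,\de u)\|_{H^{-1}}\le\|r_2\|_{L^\infty}\|\de u\|_{L^2}$ so that the coupling no longer costs a derivative on $r_2$. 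The remaining coupling term is handled by duality, $|\langle\de r,\,u_1^\perp\cdot\de u\rangle|\le\|\de r\|_{H^{-1}}\,\|u_1\,\de u\|_{H^1}$, with $\|u_1\,\de u\|_{H^1}\le\|u_1\|_{L^\infty}\|\de u\|_{H^1}+\|\nabla u_1\|_{L^4}\|\de u\|_{L^4}$; the coefficients $\|u_1\|_{L^\infty}$ and $\|\nabla u_1\|_{L^4}$ are time-integrable thanks to $u_1\in L^2_T(H^2)$, while $\|\de u\|_{H^1}$ and $\|\de u\|_{L^4}$ are absorbed by the viscous term through Young's inequality. Adding the two estimates and running Gronwall on $\|\de u\|_{L^2}^2+\|\de r\|_{H^{-1}}^2$ forces $\de u\equiv0$ and $\de r\equiv0$, whence uniqueness and, with it, convergence of the whole sequence in Theorem~\ref{t:hom}.

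The main obstacle is exactly this derivative loss: since transport cannot regularise $r$, a naive $L^2$ estimate on $\de r$ would require $\nabla r_2\in L^\infty$, which is unavailable when $\beta$ is small. Placing $\de r$ in $H^{-1}$ and exploiting $\div\de u=0$ removes this difficulty but shifts it onto the control of the drift $u_1$ in the $H^{-1}$ transport inequality; this is where the self-improved regularity $u_1\in L^2_T(H^{2+\g})$, giving $\nabla u_1\in L^1_T(L^\infty)$, closes the argument by plain Gronwall (should one wish to work only with $u_1\in L^2_T(H^2)$, the borderline drift $\nabla u_1\in L^2_T(\mathrm{BMO})$ would instead force an Osgood logarithmic argument, exactly as in the uniqueness theory of two-dimensional density-dependent flows, \cite{B-C-D} Theorem~3.41 and \cite{D_2004} Proposition~5.1). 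This is precisely the point at which the hypothesis $\beta>0$ and the choice of energy spaces make the whole scheme consistent.
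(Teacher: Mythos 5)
Your existence bootstrap contains a circularity, and it is precisely the step the paper's proof is built to avoid. You first quote the classical no-loss transport estimate $\|r(t)\|_{H^{1+\g}}\le\|r_0\|_{H^{1+\g}}\exp\bigl(C\int_0^t\|\nabla u\|_{L^\infty}\bigr)$, which requires a Lipschitz drift, and to make $\nabla u\in L^2_T(L^\infty)$ available you claim that ``the product laws give $r\,u^\perp\in L^2_T(H^{\g})$'' so that maximal regularity upgrades $u$ to $L^2_T(H^{2+\g})$. But at that stage of the argument the only information on $r$ is $r\in L^\infty_T(L^2\cap L^\infty)$, and boundedness carries no positive Sobolev regularity: $L^2\cap L^\infty\not\subset H^{\g}$ for any $\g>0$, so the product $r\,u^\perp$ lands only in $L^\infty_T(L^2)$ and the upgrade cannot start. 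To feed the bootstrap you would need $r\in L^\infty_T(H^{\g})$ for some $\g>0$ \emph{first}, i.e. you would need to propagate positive regularity along a flow whose gradient is merely in $L^2_T(H^1)$ (borderline ${\rm BMO}$ in dimension two, not $L^\infty$) --- and this is impossible without a loss of regularity. This is not a technicality: it is the reason the theorem claims only $\g<\beta$ strictly. The missing ingredient is the losing-regularity transport estimate, Proposition 5.2 of \cite{D_2004} (see also Theorem 3.33 of \cite{B-C-D}), which is exactly what the paper invokes: it requires only $\nabla u\in L^2_T(H^1)$ --- precisely the bound you obtain by testing against $-\Delta u$ with Gagliardo--Nirenberg, which part of your proof matches the paper's --- and yields directly $r\in\mc C_T(H^{1+\g})$ for all $\g<\beta$, with a bound of the form $\exp\bigl(C\bigl(\int_0^t\|\nabla u\|_{H^1}\bigr)^{2}\bigr)\|r_0\|_{H^{1+\beta}}$. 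Once this lemma is granted, your bootstrap does become legitimate (now $r\,u^\perp\in L^2_T(H^{1+\g})$ since $H^{1+\g}$ is an algebra), but it is then superfluous for existence.

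The uniqueness scheme you propose is genuinely different from the paper's, and the comparison is instructive --- but as written it inherits the same gap. The paper (Proposition \ref{p:stab}) measures $\de r$ in $L^2$ and $\de u$ in the energy space, and pays the derivative loss in $\de u\cdot\nabla r_1$ with the \emph{propagated regularity of the density}, via $\|\de u\cdot\nabla r_1\|_{L^2}\le\|\de u\|_{H^1}\,\|r_1\|_{H^{1+\beta/2}}$ (Corollary \ref{c:product} (iii)); no negative norms and no Lipschitz drift ever enter, and Gronwall closes using only $\|u_1\|_{L^\infty}+\|u_1\|_{L^4}$, controlled by $L^\infty_T(H^1)\cap L^2_T(H^2)$. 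Your scheme instead measures $\de r$ in $H^{-1}$, exploiting that the coupling source $\div(r_2\,\de u)$ is a divergence so that only $\|r_2\|_{L^\infty}$ is needed --- an elegant idea that would, in principle, require less regularity on the density. However, the $H^{-1}$ transport inequality for $\de r$ shifts the loss onto the drift and needs $\nabla u_1\in L^1_T(L^\infty)$, which you can only obtain through the broken bootstrap (your parenthetical Osgood alternative with $\nabla u_1\in L^2_T({\rm BMO})$ is plausible but is only gestured at, not carried out). So, concretely: replace the circular bootstrap by the citation of Proposition 5.2 of \cite{D_2004}; after that, either your $H^{-1}$ route (now that $u_1\in L^2_T(H^{2+\g})$ is honestly available) or the paper's $L^2$-for-$\de r$ route closes uniqueness.
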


The rest of this section is devoted to the proof of the previous statement. More precisely, we will focus   on the uniqueness part, the existence part being quite standard
(as a byproduct of Theorem \ref{t:hom} and propagation of regularity);  for the sake of completeness, we nevertheless show \textsl{a priori} estimates in Paragraph~\ref{sss:ex} before turning to the uniqueness result in Paragraph~\ref{sss:stabunique}.

\subsubsection{A priori estimates} \label{sss:ex}
In Theorem \ref{t:hom_r-u} we require   additional smoothness on the initial data, compared to what  can be expected from the singular limit problem.
We do not give details about the existence of solutions at that level of regularity but   focus   on how to get \textsl{a priori} estimates for system \eqref{eq:hom_lim}.

First of all, since by assumption $r_0\in L^2\cap L^\infty$ is transported by a divergence-free velocity field, for all $t\geq0$ we get
\begin{equation} \label{est:hom_r}
 \|r(t)\|_{L^q}\,=\,\|r_0\|_{L^q}\qquad\qquad\qquad\qquad \mbox{for all }\;q\,\in\,[2,+\infty]\,.
\end{equation}

As for $u$, we need to get higher regularity estimates: for this, we shall follow the same steps as for the proof of the existence of weak solutions to the non-homogeneous
Navier-Stokes equations at $H^1$ level of regularity. We refer to pages 31-32 of \cite{Lions_1} for details and original references of that result
(see also comments in Theorem 3.41 of \cite{B-C-D}, and Proposition 5.1 of \cite{D_2004}).
Actually, the situation here is simpler, since our equation for $u$ is a homogeneous Navier-Stokes equation:~$r$ appears just on the Coriolis
term, which can be treated as a forcing term at this level. As a consequence, we shall not attempt to prove refined estimates but only what is useful to infer the existence of a unique solution.

Before going on, let us point out that we will not keep track of the dependence of the various constants on the viscosity coefficient $\nu$, which is
positive and fixed.

First of all, if we multiply the second equation in \eqref{eq:hom_lim} by $u$, we integrate over $\Omega$ and then in time, we immediately get
\begin{equation} \label{est:hom_u}
\|u(t)\|^2_{L^2}\,+\,\nu\,\int^t_0\|\nabla u(\tau)\|^2_{L^2}\,d\tau\,\leq\,C\,\|u_0\|^2_{L^2}\,.
\end{equation}
Now  let us   multiply the equation by~$-\Delta u$. We find, thanks to H\"older's inequality,
$$
\begin{aligned}
\frac12\, \frac d{dt}\, \|\nabla u\|_{L^2}^2\, +\, \nu\, \|\Delta u\|_{L^2}^2\,& =\,- \int u \cdot \nabla u \cdot(-\Delta u)\, dx\, -\,  \int r u^\perp\cdot (-\Delta u)\, dx \\
& \leq\, \|u\|_{L^4}\, \|\nabla u\|_{L^4} \, \|\Delta u\|_{L^2}\, +\, \|r\|_{L^\infty}\,\|u\|_{L^2}\, \|\Delta u\|_{L^2} \,.
\end{aligned}
$$
By \eqref{est:hom_r} and \eqref{est:hom_u}, along with Gagliardo-Nirenberg's inequality, we infer that
$$
 \frac12\,\frac d{dt}\, \|\nabla u\|_{L^2}^2\, +\, \frac\nu2\, \|\Delta u\|_{L^2}^2\, \leq\, 
\|u_0\|^{1/2}_{L^2}\, \|\nabla u\|_{L^2}\,  \|\Delta u\|_{L^2}^{3/2}\, +\,C\, \|r_0\|^2_{L^\infty}\,\|u_0\|_{L^2}^2\,,
$$
hence finally
$$
   \frac d{dt}\, \|\nabla u\|_{L^2}^2\, +\, \nu\, \|\Delta u\|_{L^2}^2\,  \leq\, C\, \|u _0\|_{L^2}^2 \|\nabla u\|_{L^2}^4\,
 +\, \frac \nu2 \,  \|\Delta u\|_{L^2}^2\,+\, C\, \|r_0\|^2_{L^\infty}\,\|u_0\|_{L^2}^2\,.
$$
Gronwall's inequality gives, using \eqref{est:hom_u} again,
\begin{equation}\label{est:hom_du} 
 \|\nabla u(t)\|_{L^2}^2\, +\, \int_0^t  \|\Delta u(\tau)\|_{L^2}^2\, d\tau\, \leq\, C\, \Big(  \|\nabla u_0\|_{L^2}^2\,
 +\,T\,  \|r_0\|^2_{L^\infty}\,\|u_0\|_{L^2}^2 \Big)\,\exp\left(C\|u_0\|_{L^2}^4\right)\,.
\end{equation}

Now that we have obtained the property $u\,\in\,L^2\bigl([0,T];H^2\bigr)$, we can apply Proposition 5.2 of \cite{D_2004} (see also Theorem 3.33 of \cite{B-C-D}): we deduce that
$r\,\in\,\mc C\bigl([0,T];H^{1+\g}\bigr)$ for all $\g<\beta$, and it satisfies the estimate
\begin{align}
\|r(t)\|_{H^{1+\g}}\,&\leq\,C\,\exp\left(C\Big(\int^t_0\|\nabla u\|_{H^1}\,d\tau\Big)^{2}\right)\,\|r_0\|_{H^{1+\beta}} \label{est:hom_r_H} \\
&\leq\,C\,\exp\left(C\,T\,\Bigl(\|u_0\|^2_{L^2}+\left(\|\nabla u_0\|_{L^2}^2\,+\,T\,  \|r_0\|^2_{L^\infty}\,\|u_0\|_{L^2}^2\right)\,e^{C\|u_0\|_{L^2}^4}\Bigr)\right)\,
\|r_0\|_{H^{1+\beta}} \nonumber
\end{align}
for all $t\in[0,T]$, for a suitable constant $C$ depending on $\beta$ and $\g$.

\medbreak
These properties having been established, let us turn our attention to the \emph{uniqueness} issue.

\subsubsection{Stability estimates and uniqueness}\label{sss:stabunique}
Uniqueness of solutions to system \eqref{eq:hom_lim} is an immediate consequence of the next proposition, which provides a stability estimate.
In the proof, we limit ourselves to presenting the formal estimates, omitting a standard regularization procedure to make the computations rigorous.

\begin{prop} \label{p:stab}
Fix $\beta>0$. Consider two initial densities $r_{0,1}$ and $r_{0,2}$ in $H^{1+\beta}(\Omega)$, and two initial divergence-free velocity fields $u_{0,1}$ and $u_{0,2}$ in $H^1(\Omega)$.  
Let $(r_1,u_1)$ and $(r_2,u_2)$ be two solutions to system \eqref{eq:hom_lim} on $[0,T]\times\Omega$, related to the initial data $(r_{0,1},u_{0,1})$ and $(r_{0,2},u_{0,2})$
respectively. Define $\de r\,:=\,r_1-r_2$ and~$\de u\,:=\,u_1-u_2$, and let $\de r_0$ and $\de u_0$ be the same quantities computed on the initial data.

Then there exists 
a constant $C_0(T)$, depending just on the norms of the initial data and on the fixed time $T>0$, such that the following estimates hold true
for all $t\in[0,T]$:
$$\|\de r(t)\|^2_{L^2}\,+\,\|\de u(t)\|^2_{H^1} \,\leq\,
C_0(T)\;e^{C_0(T)}\;\bigl(\|\de r_0\|^2_{L^2}\,+\,\|\de u_0\|^2_{H^1}\bigr)\,.
$$
\end{prop}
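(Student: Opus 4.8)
The plan is to derive the evolution system satisfied by the differences $\delta r:=r_1-r_2$ and $\delta u:=u_1-u_2$, and then to run energy estimates that exploit the higher regularity of $(r_i,u_i)$ recorded in Paragraph~\ref{sss:ex}. Subtracting the two copies of \eqref{eq:hom_lim} and using $\div u_1=\div u_2=0$, I would write the density difference as the transport equation $\partial_t\delta r+u_1\cdot\nabla\delta r=-\,\delta u\cdot\nabla r_2$, and the velocity difference as $\partial_t\delta u+u_1\cdot\nabla\delta u+\nabla\delta\Pi-\nu\Delta\delta u=-\,\delta u\cdot\nabla u_2-\delta r\,u_1^\perp-r_2\,\delta u^\perp$, together with $\div\delta u=0$. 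The coupling terms appearing on the right-hand sides are exactly what must be controlled, and their treatment is where all the regularity assumptions enter.

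First I would perform the $L^2$ estimates. Testing the $\delta r$-equation by $\delta r$, the transport term $u_1\cdot\nabla\delta r$ integrates to zero (divergence-free field), leaving the single delicate term $-\int(\delta u\cdot\nabla r_2)\,\delta r\,dx$; this is where the hyperbolicity of the density equation costs a derivative. I would fix $\gamma\in\,]0,\beta[\,$ and use $r_2\in H^{1+\gamma}$ (Theorem~\ref{t:hom_r-u}(i)), so that $\nabla r_2\in H^\gamma\hookrightarrow L^{2/(1-\gamma)}$; pairing this by H\"older with $\delta u\in L^{2/\gamma}$ (legitimate in dimension two, where $H^1\hookrightarrow L^q$ for every finite $q$) and $\delta r\in L^2$ bounds the term by $C\,\|r_2\|_{H^{1+\gamma}}\,\|\delta u\|_{H^1}\,\|\delta r\|_{L^2}$. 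Testing the $\delta u$-equation by $\delta u$, the pressure drops since $\div\delta u=0$, the Coriolis contribution $\int r_2\,\delta u^\perp\cdot\delta u$ vanishes by pointwise orthogonality, and the terms $\int(\delta u\cdot\nabla u_2)\cdot\delta u$ and $\int\delta r\,u_1^\perp\cdot\delta u$ are handled via Gagliardo-Nirenberg (Proposition~\ref{p:Gagl-Nir}) and Young's inequality, part of their cost being absorbed by the viscous term $\nu\|\nabla\delta u\|_{L^2}^2$.

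Next I would upgrade $\delta u$ to $H^1$, which is needed both for the statement and in order to close the bound on $\delta u\cdot\nabla r_2$ above. Testing the $\delta u$-equation by $-\Delta\delta u$ yields $\tfrac12\frac{d}{dt}\|\nabla\delta u\|_{L^2}^2+\nu\|\Delta\delta u\|_{L^2}^2$ on the left (the pressure again disappears, as $\Delta\delta u$ is divergence-free) against four terms on the right. The convective pieces $\int(u_1\cdot\nabla\delta u)\cdot\Delta\delta u$ and $\int(\delta u\cdot\nabla u_2)\cdot\Delta\delta u$ are estimated by H\"older and Gagliardo-Nirenberg, the second one forcing the factor $\|u_2\|_{H^2}$; the Coriolis terms $\int r_2\,\delta u^\perp\cdot\Delta\delta u$ and $\int\delta r\,u_1^\perp\cdot\Delta\delta u$ use respectively $\|r_2\|_{L^\infty}=\|r_{0,2}\|_{L^\infty}$ (conserved by transport, see \eqref{est:hom_r}) and $\|u_1\|_{L^\infty}\lesssim\|u_1\|_{H^1}^{1/2}\|u_1\|_{H^2}^{1/2}$. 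After Young's inequality all the $\|\Delta\delta u\|_{L^2}^2$ contributions are absorbed by the dissipation, and the remaining prefactors $\|u_i\|_{H^2}^2$ and $\|u_1\|_{L^\infty}^2$ are merely time-integrable, that is in $L^1([0,T])$, by the a priori bound \eqref{est:hom_du} and the two-dimensional embedding $L^\infty_T(H^1)\cap L^2_T(H^2)\hookrightarrow L^2_T(L^\infty)$.

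Finally I would set $E(t):=\|\delta r(t)\|_{L^2}^2+\|\delta u(t)\|_{H^1}^2$, add the three estimates, choose the Young constants so that the viscous terms absorb every surviving $\|\nabla\delta u\|_{L^2}^2$ and $\|\Delta\delta u\|_{L^2}^2$, and reach a differential inequality $\frac{d}{dt}E(t)\le C(t)\,E(t)$ in which $C\in L^1([0,T])$ is assembled from the norms $\|r_i\|_{H^{1+\gamma}}$, $\|r_{0,i}\|_{L^\infty}$, $\|u_i\|_{H^1}$ and $\|u_i\|_{H^2}$, all finite on $[0,T]$ by Paragraph~\ref{sss:ex}. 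Gr\"onwall's lemma in integral form then gives the stated bound with $C_0(T)$ depending only on the data norms and on $T$. The one genuine obstacle is the loss of a derivative in the density equation: the product $\delta u\cdot\nabla r_2$ cannot be closed in pure energy spaces without spending the extra regularity $r_0\in H^{1+\beta}$, which is precisely why uniqueness requires $\beta>0$; the rest is bookkeeping on how to split the viscous dissipation and to verify that the Gr\"onwall coefficient is integrable in time.
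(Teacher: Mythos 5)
Your proposal is correct, and it in fact does slightly more than the paper's own proof. The two arguments share the same backbone: you derive the same difference system (your splitting $u_1\cdot\nabla\de r=-\de u\cdot\nabla r_2$, $\de r\,u_1^\perp+r_2\,\de u^\perp$ is the mirror image of the paper's $u_2\cdot\nabla\de r=-\de u\cdot\nabla r_1$, $r_2\,\de u^\perp+\de r\,u_1^\perp$, which is immaterial); you control the derivative-loss term $\de u\cdot\nabla r_2$ by $\|\de u\|_{H^1}\,\|r_2\|_{H^{1+\g}}\,\|\de r\|_{L^2}$ -- your explicit H\"older/Sobolev pairing $\nabla r_2\in H^\g\hra L^{2/(1-\g)}$ against $\de u\in L^{2/\g}$ is exactly the content of the product law (Corollary \ref{c:product} (iii)) that the paper invokes; and your $L^2$ estimate on $\de u$ (orthogonality killing $\int r_2\,\de u^\perp\cdot\de u$, Gagliardo--Nirenberg on the convection term, $\|u_1\|_{L^\infty}\in L^2([0,T])$ from $L^\infty_T(H^1)\cap L^2_T(H^2)$, then Gr\"onwall) matches the paper line by line. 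Where you genuinely diverge is the additional $H^1$-level step: you test the $\de u$-equation by $-\Delta\de u$ and absorb all four right-hand terms into the dissipation, with time-integrable Gr\"onwall coefficients built from $\|u_i\|_{H^1}\|u_i\|_{H^2}$ and $\|r_{0,2}\|_{L^\infty}$. The paper does not carry out this step: its displayed final inequality controls only $\|\de r(t)\|^2_{L^2}+\|\de u(t)\|^2_{L^2}+\int_0^t\|\nabla\de u\|^2_{L^2}\,d\tau$, so the pointwise-in-time $\|\de u(t)\|^2_{H^1}$ appearing in the statement is never actually estimated there (consistent with the authors' caveat that ``the statement and its proof are far from being optimal'' -- the weaker bound already yields uniqueness, which is all that is used). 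Your extra estimate, modeled on the \textsl{a priori} computation of Paragraph \ref{sss:ex}, therefore closes the literal statement of the proposition, at the price of using the full strength of the regularity $u_i\in L^\infty_T(H^1)\cap L^2_T(H^2)$; each step of it checks out (the pressure drops since $\Delta\de u$ is divergence-free, $\|u_1\|^2_{L^\infty}\lesssim\|u_1\|_{H^1}\|u_1\|_{H^2}\in L^1([0,T])$, and the coefficients multiplying $E(t)=\|\de r\|^2_{L^2}+\|\de u\|^2_{H^1}$ are all in $L^1([0,T])$), so Gr\"onwall closes exactly as you say.
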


\begin{rem} \label{r:stab}
The $H^1$ assumption on the initial velocity field is needed in order to control $\nabla r$. In fact, this term arises from stability estimates for the mass equation; it represents a loss of one
derivative due to hyperbolicity of this equation.
Imposing higher smoothness on $u$ allows us to avoid the use of the $L^\infty$ norm of $\nabla r$, which has no chance of being bounded at this level of regularity (see inequality \eqref{est:delta-r}
below for further details). Note that the statement and its proof are far from being optimal but are enough for our purposes.
\end{rem}

\begin{proof}[Proof of Proposition~{\rm\ref{p:stab}}]
Let us start by considering the transport equation for the density: taking the difference of the equation for $r_1$ by the equation for $r_2$, we find that $\de r$ satisfies
$$
\d_t\de r\,+\,u_2\cdot\nabla\de r\,=\,-\,\de u\cdot\nabla r_1\,.
$$
We now take the scalar product in $L^2$ of this equation with $\de r$ and integrate in time: easy computations lead   to the estimate
$$
\|\de r(t)\|^2_{L^2}\,\leq\,C\,\left(\|\de r_0\|^2_{L^2}\,+\,\int^t_0\left\|\de u\cdot\nabla r_1\right\|^2_{L^2}\,d\tau\,+\,\int^t_0\left\|\de r\right\|^2_{L^2}\,d\tau\right)\,.
$$
Thanks to Corollary~\ref{c:product} (iii) we have
$$ \left\|\de u\cdot\nabla r_1\right\|_{L^2} \,  \leq \,\|\de u\|_{H^1}\,\|r_1\|_{H^{1+\beta/2}} \,;
$$
so we infer, for all~$0\leq t \leq T$, that
$$
\begin{aligned}
\|\de r(t)\|^2_{L^2}\,&\leq\,C\|\de r_0\|^2_{L^2}\,+\,C\,\int^t_0\|\de u\|^2_{H^1}\,\|r_1\|^2_{H^{1+\beta/2}}\,d\tau\,+\,C\int^t_0\left\|\de r\right\|^2_{L^2}\,d\tau \\
\,&\leq\,C\,\|\de r_0\|^2_{L^2}\,+\,C_0(T)\,\int^t_0\|\de u\|^2_{H^1} \,d\tau\,+\,C\int^t_0\left\|\de r\right\|^2_{L^2}\,d\tau\,,
\end{aligned}
$$
thanks to~\eqref{est:hom_r_H}, where $C_0(T)$ is a constant depending  on   norms of the initial data and on     time~$T>0$, and can change from line to line.
By Gronwall's inequality  we obtain that,  for all~$0\leq t \leq T$,
\begin{equation} \label{est:delta-r}
\|\de r(t)\|^2_{L^2}\,\leq\,  \Big( \|\de r_0\|^2_{L^2}\,+\,C_0(T)\,\int^t_0\|\de u\|^2_{H^1}\,d\tau \Big) \,\exp(C\,T)\,.
\end{equation}
Let us focus now on the momentum equations: easy computations show that $\de u$ solves
\begin{equation} \label{eq:delta-u}
\d_t\de u\,+\,u_2\cdot\nabla\de u\,+\,\de u\cdot\nabla u_1\,+\,\nabla\de\Pi\,+\,r_2\,\de u^\perp\,+\,\de r\,u_1^\perp\,-\,\nu\,\Delta\de u\,=\,0\,,
\end{equation}
where~$\de\Pi:=\Pi_1-\Pi_2$.
An~$L^2$ energy estimate provides
$$
\frac{1}{2}\,\frac{d}{dt}\|\de u\|_{L^2}^2 \,+\,\nu \|\nabla\de u\|_{L^2}^2 \,=\,-\,\int\de u\cdot\nabla u_1\,\cdot\,\de u\,dx\,-\,\int\de r\,u_1^\perp\,\cdot\,\de u\,dx\,.
$$
The terms on the right-hand side can be bounded in the following way:
\begin{eqnarray*}
\left|\int\de r\,u_1^\perp\,\cdot\,\de u \, dx \right| & \leq & C\,\|\de r\|_{L^2}\;\|u_1\|_{L^\infty}\;\|\de u\|_{L^2} \leq C\,\|u_1\|_{L^\infty}\; \left(\|\de r\|_{L^2}^2\;+ \|\de u\|_{L^2}^2\right)\\
\left|\int\de u\cdot\nabla u_1\,\cdot\,\de u \, dx \right| & \leq &  C\,\|\de u\|_{L^4}\;\|u_1\|_{L^4}\;\|\nabla\de u\|_{L^2}\leq C\,\;\|\de u\|^{1/2}_{L^2}\|\nabla\de u\|^{3/2}_{L^2}\;\|u_1\|_{L^4}\,,
\end{eqnarray*}
where, for the latter term, we have performed an integration by parts and we have used also Gagliardo-Nirenberg inequality. By Young's inequality, after an integration in time, it is easy to arrive
at the estimate
\begin{eqnarray*}
& & \hspace{-0.5cm} \|\de u(t)\|^2_{L^2}\,+\, \nu\int^t_0\|\nabla\de u(\tau)\|^2_{L^2}\,d\tau\,  \\
& & \qquad\qquad\leq\,\|\de u_0\|^2_{L^2}\,+\,C\int^t_0\|\de u\|^2_{L^2}\,\|u_1\|^4_{L^4}\,d\tau\,+\,C\int^t_0 (\|\de r\|_{L^2}^2\;+ \|\de u\|_{L^2}^2)\,\|u_1\| _{L^\infty}\,d\tau\,. \nonumber
\end{eqnarray*}
Gronwall's lemma provides
$$
\begin{aligned}
 \|\de u(t)\|^2_{L^2}\,+\,  \int^t_0\|\nabla\de u(\tau)\|^2_{L^2}\,d\tau &\leq  C\, \Big( \|\de u_0\|^2_{L^2}+
  \int^t_0\|\de r\|_{L^2}^2 \,\|u_1\| _{L^\infty}\,d\tau
 \Big)\\
 &\qquad\times\,\exp \left( C\int_0^T \big( \|u_1\|^4_{L^4}  + \|u_1\| _{L^\infty}\big)\,d\tau\right) \,.
\end{aligned}
 $$
 Hence, using the fact that
 $$
 \|u_1\| _{L^\infty} \leq C ( \|u_1\| _{L^2} +  \|u_1\| _{H^2} )\,,
 $$
 we infer
 $$
\begin{aligned}
 \|\de u(t)\|^2_{L^2}\,+\,  \int^t_0\|\nabla\de u(\tau)\|^2_{L^2}\,d\tau &\leq    C \Big( \|\de u_0\|^2_{L^2}+   \int^t_0\|\de r\|_{L^2}^2( \|u_1\| _{L^2} +  \|u_1\| _{H^2} ) \,d\tau
 \Big)\,e^{C_0(T)}\,.
 \end{aligned}
 $$
 It remains to plug that inequality into~(\ref{est:delta-r}) to find
$$
\begin{aligned}
&\|\de r(t)\|^2_{L^2}
+ \|\de u(t)\|^2_{L^2} +    \int^t_0\|\nabla\de u(\tau)\|^2_{L^2}\,d\tau   \\
&\quad \leq  \,C_0(T)\,\Big( \|\de u_0\|^2_{L^2}+ \|\de r_0\|^2_{L^2}\,+\, \int^t_0  \|\de r\|_{L^2}^2( \|u_1\| _{L^2} +  \|u_1\| _{H^2} ) \,d\tau\,+\,\int^t_0 \| \delta u \|^2_{L^2} d\tau
 \Big)\,e^{C_0(T)}
\,.
\end{aligned}
 $$
 Gronwall's lemma ends the proof of the proposition.
\end{proof}

\section{The fully non-homogeneous case} \label{s:dens}

In this section we tackle the convergence for $\veps\ra0$ in the case of a non-constant target density. Subsections \ref{ss:vort_weak} to \ref{ss:limit} 
are devoted to the proof of Theorem \ref{t:dens}. In Subsection \ref{ss:dens-full} we present a conditional result, where we are able to show convergence to
(a slightly modified version of) the full system \eqref{eq:dens_lim-full}.

\medbreak
We start by noticing that passing to the limit in the mass equation involves no special difficulty, and   can be done as in the proof of Proposition \ref{p:constr}.
So we have to show convergence in the momentum equation, i.e. in the relation
$$ 
\int^T_0\!\!\int_{\Omega}\biggl(-\rho_\veps u_\veps\cdot\d_t\psi\,-\,\rho_\veps u_\veps\otimes u_\veps:\nabla\psi\,+\,\frac{1}{\veps}\,\rho_\veps u_\veps^\perp\cdot\psi\,+\,
\nu\nabla u_\veps:\nabla\psi\biggr)\,dx\,dt\,=\,\int_{\Omega}m_{0,\veps}\cdot\psi(0)\,dx\,,
$$ 
where $\psi$ is a smooth divergence-free test function, compactly supported in $[0,T[\,\times\Omega$.

Once again, thanks to uniform bounds and Lemma \ref{l:rho-u}, it is easy to perform the limit $\veps\ra0$ in the time derivative and viscous terms.
On the contrary, the Coriolis term involves some complications: let us focus on it for a while.

\subsection{Passing to the vorticity formulation} \label{ss:vort_weak}

For convenience, let us adopt the same notation as in \eqref{def:svf} and write the previous expression under the form
\begin{equation} \label{eq:V_weak}
\int^T_0\!\!\int_{\Omega}\biggl(-V_\veps\cdot\d_t\psi\,+\,\frac{1}{\veps}\,V_\veps^\perp\cdot\psi\biggr)\,dx\,dt\,=\,
\int^T_0\lan f_\veps,\psi\ran\,dt\,+\,\int_{\Omega}V_{0,\veps}\cdot\psi(0)\,dx\,,
\end{equation}
where we denote by $\lan\cdot\,,\,\cdot\ran$ the duality product in e.g. $H^{-1-\alpha}\times H^{1+\alpha}$ (recall relation \eqref{ub:f_e} above)
and we have set $V_{0,\veps}\,:=\,m_{0,\veps}$.
As before, $\psi$ is a smooth test function having zero divergence.

Our concern here is to pass to the limit in the rotation term $\veps^{-1}\,V_\veps^\perp$. To do so, owing to the fact that $\div\psi=0$, let us write
$\psi\,=\,\nabla^\perp\phi$, for some smooth scalar function $\phi\in\mc D\bigl([0,T[\,\times\Omega\bigr)$.
Using   the mass equation tested against~$\phi$, we get (keep in mind \eqref{eq:weak-mass} and \eqref{def:svf} above)
$$
\frac{1}{\veps}\int^T_0\int_{\Omega} V_\veps^\perp\cdot\psi\,=\,\frac{1}{\veps}\int^T_0\int_\Omega V_\veps\cdot\nabla\phi\,=\,
-\int^T_0\int_\Omega\sigma_\veps\,\d_t\phi\,-\,\int_\Omega r_{0,\veps}\,\phi(0)\,.
$$

On the one hand, this trick seems to us the only way to avoid the singularity of the Coriolis term. On the other hand, it forces us to pass to the vorticity formulation;
hence, we rewrite \eqref{eq:V_weak} in the following form:
\begin{equation} \label{eq:vort_weak}
-\int^T_0\!\!\int_\Omega V_\veps\cdot\d_t\nabla^\perp\phi\,-\,\int^T_0\!\!\int_\Omega\s_\veps\,\d_t\phi\,=\,\int^T_0\lan f_\veps,\nabla^\perp\phi\ran\,+\,
\int_\Omega V_{0,\veps}\cdot\nabla^\perp\phi(0)\,+\,\int_\Omega r_{0,\veps}\,\phi(0)\,.
\end{equation}
Notice that we have done nothing but finding again equation \eqref{eq:vort_form}.

Our goal becomes then to pass to the limit in equation \eqref{eq:vort_weak}.
Recalling the definition of $f_\veps$ given in \eqref{def:svf} and the discussion at the beginning of the present section, the only difficulty
relies in proving convergence in the convective term: so  let us focus our attention on it.

\subsection{Handling the convective term} \label{ss:convective}

In   light of the previous discussion,   it remains to pass to the limit in the convective term, namely in relation
$$
\int^T_0\int_\Omega\rho_\veps\,u_\veps\otimes u_\veps\,:\,\nabla\nabla^\perp\phi\,dx\,dt\,.
$$
The strategy is similar to the one adopted above for slightly non-homogeneous fluids, see in particular Subsections \ref{ss:approx} and \ref{ss:comp-comp}.

First of all, let us write down the system of wave equations, which governs the propagation of oscillations in the dynamics. We could call them \emph{Rossby waves}:
this terminology is typically used when there are variations of the rotation axis (see e.g. \cite{CR}), but  in our context having a non-constant
$\rho_0$ produces very similar effects.

\subsubsection{Description of Rossby waves and regularization} \label{sss:rossby}

With the notation introduced in \eqref{def:svf}, we can write system \eqref{eq:dd-NSC} in the form of a wave equation~\eqref{eq:waves}, which we recall here for convenience:
\begin{equation} \label{eq:waves_2}
\left\{\begin{array}{l}
        \veps\,\d_t\s_\veps\,+\,\div V_\veps\,=\,0 \\[1ex]
        \veps\,\d_tV_\veps\,+\,\nabla\Pi_\veps\,+\,V^\perp_\veps\,=\,\veps\,f_\veps\,.
       \end{array}
\right.
\end{equation}
Recall also  the uniform bounds \eqref{ub:f_e} for $(f_\veps)_\veps$. Applying the $\curl$ operator to the second equation, we find
\begin{equation} \label{eq:waves_vort}
\left\{\begin{array}{l}
        \veps\,\d_t\s_\veps\,+\,\div V_\veps\,=\,0 \\[1ex]
        \veps\,\d_t\eta_\veps\,+\,\div V_\veps\,=\,\veps\,\curl f_\veps\,,
       \end{array}
\right.
\end{equation}
where $\eta_\veps\,=\,\curl V_\veps$ as in Subsection \ref{ss:further} above.

As in Subsection \ref{ss:approx}, we now fix $M\in\N$ and, denoting by $S_M$ the low frequency cut-off operator of a Littlewood-Paley decomposition introduced in \eqref{eq:S_j}
in the Appendix, we define
$$
\sigma_{\veps,M}\,:=\,S_M\s_\veps\;,\qquad V_{\veps,M}\,:=\,S_MV_\veps\qquad\mbox{ and }\qquad
\eta_{\veps,M}\,:=\,\curl V_{\veps,M}\,=\,S_M\eta_\veps\,.
$$
Notice that all these quantities are smooth with respect to the $x$ variable. For later use, let us immediately introduce also the smooth functions
$$
u_{\veps,M}\,:=\,S_Mu_\veps\qquad\qquad\mbox{ and }\qquad\qquad \omega_{\veps,M}\,:=\,\curl u_{\veps,M}\,=\,S_M\omega_\veps\,.
$$
Let us point out that, analogously to \eqref{ub:u_e-M}, by uniform bounds we get
$$ 
\bigl\|u_{\veps,M}\bigr\|_{L^2_T(H^{k})}\,\leq\,C(T,k,M)
$$ 
for all $k\geq1$, for some constant $C(T,k,M)$ just depending on the quantities in the brackets; in particular, for $k=1$ we have
\begin{equation} \label{ub_dens:u_2}
\bigl\|u_{\veps,M}\bigr\|_{L^2_T(H^{1})}\,\leq\,C(T)\,.
\end{equation}

The following regularization result holds true. Its proof is standard, and hence omitted. We limit ourselves to noticing here that the convergence properties \eqref{reg:convergence}
are straightforward consequences of the uniform bound \eqref{ub:sigma_e} for $\bigl(\sigma_\veps\bigr)_\veps$ and the embedding $\bigl(\eta_\veps\bigr)_\veps\,\subset\,L^\infty_T(H^{-1})$.
\begin{prop} \label{p:regular}
For any fixed time $T>0$, the following convergence properties hold, in the limit   $M\longrightarrow+\infty$:
\begin{equation} \label{reg:convergence}
\begin{cases}
\; \sup_{\veps>0}\left\|\s_\veps\,-\,\s_{\veps,M}\right\|_{L^\infty_T(H^{-s})}\,\longrightarrow\,0
\qquad\qquad\forall\; s>2 \\[1ex]
\; \sup_{\veps>0}\left\|\eta_\veps\,-\,\eta_{\veps,M}\right\|_{L^\infty_T(H^{-s})}\,\longrightarrow\,0
\qquad\qquad\forall\; s>1\,.
\end{cases}
\end{equation}
Moreover, for any $M>0$, the couple $\bigl(\s_{\veps,M}\,,\,\eta_{\veps,M}\bigr)$ satisfies the wave equation
\begin{equation} \label{reg:approx-w}
\begin{cases}
\; \veps\,\d_t\s_{\veps,M}\,+\,\div\,V_{\veps,M}\,=\,0 \\[1ex]
\; \veps\,\d_t\eta_{\veps,M}\,+\,\div\,V_{\veps,M}\,=\,\veps\,\curl f_{\veps,M}\,,
\end{cases}
\end{equation}
where $\bigl(f_{\veps,M}\bigr)_{\veps}$ is a family of smooth functions satisfying
\begin{equation} \label{reg:source}
\sup_{\veps>0}\left\|f_{\veps,M}\right\|_{L^2_T(H^{s})}\,\leq\,C(s,M,T)
\qquad\qquad\forall\; s\geq0\,,
\end{equation}
for suitable constants $C(s,M,T)$ depending only on the fixed~$s\geq0$, $M>0$ and~$T>0$.
\end{prop}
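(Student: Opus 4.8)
The plan is to exploit that $S_M$ is the Fourier multiplier localizing on frequencies $\lesssim 2^M$ (see \eqref{eq:S_j}): it therefore commutes with $\d_t$, $\div$ and $\curl$, and it enjoys the usual Littlewood-Paley smoothing and Bernstein estimates. Accordingly, I would split the proof into two independent pieces. The first is the set of approximation properties \eqref{reg:convergence}, which rest only on the uniform bound \eqref{ub:sigma_e} and on the embedding $(\eta_\veps)_\veps\subset L^\infty_T(H^{-1})$ established in Subsection \ref{ss:further}. The second is the derivation of the regularized system \eqref{reg:approx-w} together with the source bound \eqref{reg:source}.

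For the convergence statements I would invoke Lemma \ref{l:Id-S} (see also \eqref{eq:LP-Sob}), which yields, for any $b\le a$, the smoothing estimate $\|(\Id-S_M)f\|_{H^b}\le 2^{-(a-b)M}\|f\|_{H^a}$. Applying it to $f=\s_\veps$ with $a=-2-\delta$ (provided by \eqref{ub:sigma_e}) and $b=-s$, and choosing $\delta\in\,]0,1[\,$ small enough that $s>2+\delta$ (which is possible precisely when $s>2$), gives
$$
\left\|\s_\veps-\s_{\veps,M}\right\|_{H^{-s}}\,\le\,2^{-(s-2-\delta)M}\,\|\s_\veps\|_{H^{-2-\delta}}\,.
$$
Taking the supremum over $t\in[0,T]$ and over $\veps$, the right-hand side is bounded by $C\,2^{-(s-2-\delta)M}$, thanks to the uniformity in \eqref{ub:sigma_e}, hence it tends to $0$ as $M\to+\infty$. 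The estimate for $\eta_\veps-\eta_{\veps,M}$ is identical, now with $a=-1$ and $b=-s$, $s>1$, using $(\eta_\veps)_\veps\subset L^\infty_T(H^{-1})$.

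For the regularized wave system I would simply apply $S_M$ to \eqref{eq:waves_vort}. Since $S_M$ commutes with the time derivative and with $\div$ and $\curl$, the two equations transform into the first and second lines of \eqref{reg:approx-w} upon setting $f_{\veps,M}:=S_Mf_\veps$, using $\curl f_{\veps,M}=S_M\curl f_\veps$; spectral localization makes each $f_{\veps,M}$ smooth in $x$. It then remains to verify \eqref{reg:source}, for which I would use the reverse, Bernstein-type inequality expressing that $S_M$ raises regularity at the price of a power of $2^M$: for any $s\ge0$, $\|S_Mg\|_{H^s}\le C\,2^{M(s+1+\alpha)}\,\|g\|_{H^{-1-\alpha}}$. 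Applied to $g=f_\veps$ and combined with the uniform bound \eqref{ub:f_e}, namely $(f_\veps)_\veps\subset L^2_T(H^{-1-\alpha})$, this gives $\sup_{\veps>0}\|f_{\veps,M}\|_{L^2_T(H^s)}\le C(s,M,T)$, as required.

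No genuine difficulty arises in this argument: it is entirely a bookkeeping of Littlewood-Paley exponents. The only points deserving attention are that the thresholds $s>2$ and $s>1$ are dictated precisely by the regularities $H^{-2-\delta}$ of $\s_\veps$ and $H^{-1}$ of $\eta_\veps$, and that all gains and losses in powers of $2^M$ are uniform in $\veps$, which is guaranteed because the underlying bounds \eqref{ub:sigma_e} and \eqref{ub:f_e} are themselves uniform in $\veps$. This is exactly why the proof is standard and can be safely omitted.
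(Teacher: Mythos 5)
Your proof is correct and follows exactly the route the paper indicates (and deliberately omits as standard): the convergence properties \eqref{reg:convergence} via the Littlewood--Paley characterization \eqref{eq:LP-Sob} of Sobolev norms applied to the uniform bounds \eqref{ub:sigma_e} and $\bigl(\eta_\veps\bigr)_\veps\subset L^\infty_T(H^{-1})$, and the regularized system \eqref{reg:approx-w} by applying $S_M$ to \eqref{eq:waves_vort} with $f_{\veps,M}:=S_Mf_\veps$, whose bound \eqref{reg:source} follows from \eqref{ub:f_e} and Bernstein's inequality. Your explicit bookkeeping of the exponents (in particular, the thresholds $s>2$ and $s>1$ matching the regularities $H^{-2-\delta}$ and $H^{-1}$) is exactly the content the paper leaves to the reader.
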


Let us conclude this part by proving an important statement, which enables us to compare the two velocity fields~$V_{\veps,M}$ and $u_{\veps,M}$.
\begin{prop} \label{p:u-V}
For all $M\in\N$ and all $\veps\in\,]0,1]$, the following equality holds true:
$$
V_{\veps,M}\,=\,\rho_0\,u_{\veps,M}\,+\,\veps^\theta\,\z_{\veps,M}\,+\,h_{\veps,M}\,,
$$
where $0<\theta<1$ is the exponent fixed in Proposition {\rm \ref{p:s-uniform}}. Moreover, the families $\bigl(\z_{\veps,M}\bigr)_{\veps}$ and~$\bigl(h_{\veps,M}\bigr)_{\veps}$
satisfy the uniform bounds
$$
\begin{cases}
\; \sup_{\veps>0}\left\|\z_{\veps,M}\right\|_{L^2_T(H^{s} )}\,\leq\,C(s,M,T)
\qquad\qquad\forall\; s\geq0 \\[1ex]
\; \sup_{\veps>0}\left\|h_{\veps,M}\right\|_{L^2_T(H^1 )}\,\leq\,C_0(T)\,2^{-M}\,,
\end{cases}
$$
for any time $T>0$, where the constants $C(s,M)$ and $C_0$ depend only on   norms of the initial data, on   time $T$ and, respectively,
on the fixed $s$ and $M$, and on the norm $\left\|\rho_0\right\|_{W^{2,\infty}}$.
\end{prop}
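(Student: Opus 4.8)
The plan is to start from the pointwise identity
$$
V_\veps\,=\,\rho_\veps\,u_\veps\,=\,\rho_0\,u_\veps\,+\,s_\veps\,u_\veps\,,
$$
which is nothing but the \emph{ansatz} $\rho_\veps=\rho_0+s_\veps$ introduced in Paragraph \ref{sss:good}. Applying the cut-off operator $S_M$ to both sides gives
$$
V_{\veps,M}\,=\,S_M\bigl(\rho_0\,u_\veps\bigr)\,+\,S_M\bigl(s_\veps\,u_\veps\bigr)\,.
$$
The whole point is then to extract from the right-hand side the main term $\rho_0\,u_{\veps,M}$ and to control the two remainders separately, the first carrying the small factor $\veps^\theta$ and the second the factor $2^{-M}$.

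The first remainder is the easy one. First I would write $s_\veps\,u_\veps\,=\,\veps^\theta\bigl(\veps^{-\theta}s_\veps\,u_\veps\bigr)$ and set $\z_{\veps,M}:=S_M\bigl(\veps^{-\theta}s_\veps\,u_\veps\bigr)$, so that the second summand is exactly $\veps^\theta\,\z_{\veps,M}$. By Proposition \ref{p:s-uniform}, the family $\bigl(\veps^{-\theta}s_\veps\,u_\veps\bigr)_\veps$ is uniformly bounded in $L^2_T(H^{-k-\delta})$; since the low-frequency projector $S_M$ maps any negative-index Sobolev space continuously into $H^s$ for every $s\geq0$, with norm controlled by some $C(s,M)$ (the Bernstein-type smoothing of $S_M$, see Appendix \ref{app:LP}), the uniform bound $\sup_{\veps>0}\|\z_{\veps,M}\|_{L^2_T(H^s)}\leq C(s,M,T)$ follows at once for all $s\geq0$.

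The heart of the matter is the first term, which I would split as a commutator,
$$
S_M\bigl(\rho_0\,u_\veps\bigr)\,=\,\rho_0\,S_Mu_\veps\,+\,[S_M,\rho_0]\,u_\veps\,=\,\rho_0\,u_{\veps,M}\,+\,h_{\veps,M}\,,\qquad h_{\veps,M}:=[S_M,\rho_0]\,u_\veps\,,
$$
so that the claim reduces to the commutator estimate $\|h_{\veps,M}\|_{L^2_T(H^1)}\leq C_0(T)\,2^{-M}$. This is the main obstacle, and it is precisely where the hypothesis $\rho_0\in\mc C^2_b$ (i.e. $\rho_0\in W^{2,\infty}$) is used. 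Relying on the basic bound $\|[S_M,a]\,f\|_{L^2}\leq C\,2^{-M}\,\|\nabla a\|_{L^\infty}\,\|f\|_{L^2}$ from paradifferential calculus, the $L^2$ part is controlled by $C\,2^{-M}\,\|\nabla\rho_0\|_{L^\infty}\,\|u_\veps\|_{L^2}$. For the gradient I would invoke the algebraic identity
$$
\d_i\bigl([S_M,\rho_0]\,u_\veps\bigr)\,=\,[S_M,\d_i\rho_0]\,u_\veps\,+\,[S_M,\rho_0]\,\d_iu_\veps\,,
$$
and apply the same commutator bound to each piece: the first term is dominated by $C\,2^{-M}\,\|\nabla^2\rho_0\|_{L^\infty}\,\|u_\veps\|_{L^2}$ and the second by $C\,2^{-M}\,\|\nabla\rho_0\|_{L^\infty}\,\|\nabla u_\veps\|_{L^2}$. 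Summing, integrating in time, and invoking the uniform bound $\|u_\veps\|_{L^2_T(H^1)}\leq C(T)$ established in Subsection \ref{ss:bounds}, I obtain
$$
\|h_{\veps,M}\|_{L^2_T(H^1)}\,\leq\,C\,2^{-M}\,\|\rho_0\|_{W^{2,\infty}}\,\|u_\veps\|_{L^2_T(H^1)}\,\leq\,C_0(T)\,2^{-M}\,,
$$
with $C_0(T)$ depending only on the data, on $T$ and on $\|\rho_0\|_{W^{2,\infty}}$. This also makes transparent why two derivatives on $\rho_0$ are needed: the derivative falling on $\rho_0$ in the gradient identity forces the control of $\nabla^2\rho_0$. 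Collecting the three contributions yields the stated decomposition.
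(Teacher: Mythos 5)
Your proposal is correct and follows essentially the same route as the paper: the same splitting $V_{\veps,M}=\veps^\theta S_M\bigl(\veps^{-\theta}s_\veps\,u_\veps\bigr)+S_M\bigl(\rho_0\,u_\veps\bigr)$ with $\z_{\veps,M}$ controlled via Proposition \ref{p:s-uniform} together with the frequency localization of $S_M$, the same commutator definition $h_{\veps,M}=\bigl[S_M,\rho_0\bigr]u_\veps$, and the same Leibniz identity $\d_j h_{\veps,M}=\bigl[S_M,\rho_0\bigr]\d_ju_\veps+\bigl[S_M,\d_j\rho_0\bigr]u_\veps$ estimated by the commutator lemma (Lemma \ref{l:commut}) and the uniform bounds \eqref{ub:u_L^p}--\eqref{ub:u_L^2}. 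Your closing observation correctly explains why $\rho_0\in W^{2,\infty}$ is exactly what the $H^1$ estimate requires.
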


\begin{proof}
By definition, we can write
\begin{equation} \label{eq:v-dec}
V_{\veps,M}\,=\,S_M\bigl(\rho_\veps\,u_\veps\bigr)\,=\,\veps^\theta\,S_M\bigl(\veps^{-\theta}\,s_\veps\,u_\veps\bigr)\,+\,S_M\bigl(\rho_0\,u_\veps\bigr)\,,
\end{equation}
where we recall that $s_\veps\,=\,\rho_\veps-\rho_0$ expresses oscillations of the density with respect to the target profile. Thanks to Proposition \ref{p:s-uniform}, 
we deduce that, for any fixed $M\in\N$, the family  
$$
\z_{\veps,M}\,:=\,\frac{1}{\veps^\theta}\;S_M\bigl(s_\veps\,u_\veps\bigr) $$
is bounded in~$L^2_T(H^s)$, uniformly with respect to $\veps\in\,]0,1]$, for any $s\geq0$.

On the other hand, denoting by $[\mc{P},\mc{Q}]$ the commutator between two operators $\mc P$ and $\mc Q$, the following relation holds true:
$$
S_M\bigl(\rho_0\,u_\veps\bigr)\,=\,\rho_0\,u_{\veps,M}\,+\,\bigl[S_M,\rho_0\bigr]u_\veps\,.
$$
So, let us set $h_{\veps,M}\,:=\,\bigl[S_M,\rho_0\bigr]u_\veps$ and estimate its $H^1$ norm. First of all, by Lemma \ref{l:commut}
and uniform bounds \eqref{ub:u_L^2}, we immediately gather
$$
\sup_{\veps>0}\left\|h_{\veps,M}\right\|_{L^2_T(L^2)}\,\leq\,C\,2^{-M}\,\left\|\nabla\rho_0\right\|_{L^\infty}\,\|u_{\veps}\|_{L^2_T(L^2)}\,\leq\,
C'\,2^{-M}\,.
$$
Furthermore, from the equality $\d_jh_{\veps,M}\,=\,\bigl[S_M,\rho_0\bigr]\d_ju_\veps\,+\,\bigl[S_M,\d_j\rho_0\bigr]u_\veps$, the combination of Lemma \ref{l:commut} with
\eqref{ub:u_L^p} and \eqref{ub:u_L^2} implies
$$
\sup_{\veps>0}\left\|\d_jh_{\veps,M}\right\|_{L^2_T(L^2)}\,\leq\,C\,2^{-M}\,\Bigl(\left\|\nabla\rho_0\right\|_{L^\infty}\,\|\nabla u_{\veps}\|_{L^2_T(L^2)}\,+\,
\left\|\nabla^2\rho_0\right\|_{L^\infty}\,\|u_{\veps}\|_{L^2_T(L^2)}\Bigr) \leq C\,2^{-M}\\.$$
 The proposition is now completely proved.
\end{proof}

From Proposition \ref{p:u-V} we immediately infer the next statement.  Notice that we propose two (related but different) decompositions for the approximate vorticities
$\eta_{\veps,M}$: they will both be  useful in our analysis.
\begin{coroll} \label{c:rot_u-V}
For all $M\in\N$ and all $\veps\in\,]0,1]$, the following equalities hold true:
$$
\begin{cases}
\; \eta_{\veps,M}\,&=\,\eta_{\veps,M}^{(1)}\,+\,\veps^\theta\,\eta_{\veps,M}^{(2)} \\[1ex]
&=\,\rho_0\,\omega_{\veps,M}\,+\,u_{\veps,M}\cdot\nabla^\perp\rho_0\,+\,\veps^\theta\,\curl\z_{\veps,M}\,+\,\curl h_{\veps,M} \\[1ex]
\; \div V_{\veps,M}\,&=\,u_{\veps,M}\cdot\nabla\rho_0\,+\,\veps^\theta\,\div\z_{\veps,M}\,+\,\div h_{\veps,M}\,,
\end{cases}
$$
where the families~$\big(\eta_{\veps,M}^{(1)}\big)_\veps$ and~$\big(\eta_{\veps,M}^{(2)}\big)_\veps$ satisfy the following estimates:
$$
\sup_{M\in\N}\;\sup_{\veps>0}\big\|\eta_{\veps,M}^{(1)}\big\|_{L^2_T(L^2)}\,\leq\,C\quad \mbox{ and }\quad
\sup_{\veps>0}\big\|\eta^{(2)}_{\veps,M}\big\|_{L^2_T(H^{s})}\,\leq\,C(s,M)\,,
$$
for any given~$s\geq0$ fixed and where~$\bigl(\z_{\veps,M}\bigr)_{\veps}$ and $\bigl(h_{\veps,M}\bigr)_{\veps}$ are defined in Proposition~{\rm\ref{p:u-V}} and, in particular,   satisfy
the same bounds established there.   
\end{coroll}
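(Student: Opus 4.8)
The plan is to derive both identities purely by applying the $\curl$ and $\div$ operators to the decomposition
$$
V_{\veps,M}\,=\,\rho_0\,u_{\veps,M}\,+\,\veps^\theta\,\z_{\veps,M}\,+\,h_{\veps,M}
$$
supplied by Proposition \ref{p:u-V}, and then to read off the two claimed bounds from the estimates on $\z_{\veps,M}$ and $h_{\veps,M}$ stated there, together with the uniform controls of Subsection \ref{ss:bounds}. So the argument is essentially algebraic; the statement is indeed an immediate consequence of Proposition \ref{p:u-V}.

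First I would record the two elementary product identities, valid in two dimensions for any scalar $\rho_0$ and any vector field $w=(w^1,w^2)$ (recall $\nabla^\perp\rho_0=(-\d_2\rho_0,\d_1\rho_0)$):
$$
\curl(\rho_0\,w)\,=\,\rho_0\,\curl w\,+\,w\cdot\nabla^\perp\rho_0\,,\qquad\qquad \div(\rho_0\,w)\,=\,\rho_0\,\div w\,+\,w\cdot\nabla\rho_0\,.
$$
Applying them with $w=u_{\veps,M}$, and using that $\div u_{\veps,M}=S_M\,\div u_\veps=0$ (since $S_M$ commutes with derivatives and $\div u_\veps=0$), one gets $\curl(\rho_0\,u_{\veps,M})=\rho_0\,\omega_{\veps,M}+u_{\veps,M}\cdot\nabla^\perp\rho_0$ and $\div(\rho_0\,u_{\veps,M})=u_{\veps,M}\cdot\nabla\rho_0$. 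Taking $\curl$ and $\div$ of the decomposition of $V_{\veps,M}$ then produces exactly the second and third displayed lines of the statement, provided one sets $\eta^{(1)}_{\veps,M}:=\rho_0\,\omega_{\veps,M}+u_{\veps,M}\cdot\nabla^\perp\rho_0+\curl h_{\veps,M}$ and $\eta^{(2)}_{\veps,M}:=\curl\z_{\veps,M}$; this choice also yields the first line $\eta_{\veps,M}=\eta^{(1)}_{\veps,M}+\veps^\theta\,\eta^{(2)}_{\veps,M}$.

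It then remains to verify the two estimates. For $\eta^{(2)}_{\veps,M}=\curl\z_{\veps,M}$, the bound $\|\z_{\veps,M}\|_{L^2_T(H^{s+1})}\leq C(s+1,M,T)$ from Proposition \ref{p:u-V} gives at once $\|\eta^{(2)}_{\veps,M}\|_{L^2_T(H^s)}\leq C(s,M)$, uniformly in $\veps$. For $\eta^{(1)}_{\veps,M}$ I would bound the three constituent terms in $L^2_T(L^2)$: the term $\rho_0\,\omega_{\veps,M}$ by $\|\rho_0\|_{L^\infty}\|\omega_{\veps,M}\|_{L^2_T(L^2)}$, the term $u_{\veps,M}\cdot\nabla^\perp\rho_0$ by $\|\nabla\rho_0\|_{L^\infty}\|u_{\veps,M}\|_{L^2_T(L^2)}$, and $\curl h_{\veps,M}$ by $\|h_{\veps,M}\|_{L^2_T(H^1)}\leq C_0(T)\,2^{-M}$.

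The one point requiring genuine care — the only step that is not purely formal — is the uniformity \emph{in $M$} of the estimate on $\eta^{(1)}_{\veps,M}$. Here I would deliberately avoid the $M$-dependent bound \eqref{ub_dens:u_2} and instead exploit the $L^2$-continuity of the cut-off $S_M$, which holds uniformly in $M$: since $\omega_{\veps,M}=S_M\,\curl u_\veps$ and $u_{\veps,M}=S_M\,u_\veps$, one has $\|\omega_{\veps,M}\|_{L^2_T(L^2)}\leq C\,\|\nabla u_\veps\|_{L^2_T(L^2)}$ and $\|u_{\veps,M}\|_{L^2_T(L^2)}\leq C\,\|u_\veps\|_{L^2_T(L^2)}$, both uniformly bounded in $\veps$ and $M$ by \eqref{ub:u_L^p} and \eqref{ub:u_L^2}. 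Since moreover $2^{-M}\leq1$, combining the three bounds yields $\sup_{M\in\N}\sup_{\veps>0}\|\eta^{(1)}_{\veps,M}\|_{L^2_T(L^2)}\leq C$, which closes the argument.
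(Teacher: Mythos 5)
Your proof is correct and takes essentially the same route as the paper, which omits the argument as a direct consequence of Proposition~\ref{p:u-V}: your $\eta^{(1)}_{\veps,M}=\rho_0\,\omega_{\veps,M}+u_{\veps,M}\cdot\nabla^\perp\rho_0+\curl h_{\veps,M}$ coincides with the paper's definition $\curl S_M(\rho_0\,u_\veps)$, and your $\eta^{(2)}_{\veps,M}=\curl\z_{\veps,M}$ is exactly the paper's $\curl S_M\bigl(\veps^{-\theta}s_\veps\,u_\veps\bigr)$. Your handling of the only delicate point, the uniformity in $M$ of the $\eta^{(1)}_{\veps,M}$ bound via the $M$-uniform $L^2$-boundedness of $S_M$ together with \eqref{ub:u_L^p} and \eqref{ub:u_L^2} (rather than the $M$-dependent bound \eqref{ub_dens:u_2}), is precisely how the paper derives its estimate from those same references.
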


\begin{proof}
The previous statement is a straightforward consequence of Proposition \ref{p:u-V}, and hence omitted. We just notice here that the former decomposition for
$\eta_{\veps,M}$ comes directly from relation~\eqref{eq:v-dec}, by setting
$$
\eta_{\veps,M}^{(1)}\,:=\,\curl S_M\bigl(\rho_0\,u_\veps\bigr)\qquad\qquad\mbox{ and }\qquad\qquad \eta_{\veps,M}^{(2)}\,:=\,\curl S_M\left(\veps^{-\theta}\,s_\veps\,u_\veps\right)\,.
$$
The uniform bounds for these quantities then derive from \eqref{ub:u_L^p}, \eqref{ub:u_L^2} and Proposition \ref{p:s-uniform}.
\end{proof}

\subsubsection{Approximation and convergence} \label{sss:dens-cc}

In this paragraph we state and prove some approximation results, in the same spirit as the ones given in Subsection \ref{ss:approx} above, and we take the limit
in the convective term by compensated compactness arguments.

Let us start by establishing the counterpart of Lemma \ref{l:1_conv} in the case of a non-constant density profile $\rho_0$.

\begin{lemma} \label{l:dens_conv}
For any test function $\psi\,\in\,\mc{D}\bigl([0,T[\,\times\Omega\bigr)$, one has
$$
\lim_{\veps\ra0^+}\left|\int^T_0\int_\Omega\rho_\veps u_\veps\otimes u_\veps:\nabla\psi\,dx\,dt\,-\,\int^T_0\int_\Omega\rho_0\,u_\veps\otimes u_\veps:\nabla\psi\,dx\,dt\right|\,=\,0\,.
$$
\end{lemma}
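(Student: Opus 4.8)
The plan is to reduce the statement to a smallness estimate on the density oscillation and then to cash in the quantitative gain provided by Proposition~\ref{p:s-uniform}. Writing $\rho_\veps=\rho_0+s_\veps$ as in Paragraph~\ref{sss:good}, the difference of the two integrals is exactly
$$
\int^T_0\!\!\int_\Omega s_\veps\,u_\veps\otimes u_\veps:\nabla\psi\,dx\,dt\,,
$$
so it suffices to show that this quantity tends to $0$ as $\veps\ra0$. I would emphasise at the outset \emph{why} the easy argument of Lemma~\ref{l:1_conv} does not transfer: there one factors $s_\veps=\veps\,r_\veps$ and uses the uniform $L^\infty_T(L^\infty)$ bound on $r_\veps$ together with \eqref{ub:u_L^p}. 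Here, by Remark~\ref{r:ub-sigma_e}, the oscillation $\sigma_\veps=\veps^{-1}s_\veps$ carries \emph{no} uniform bound, so the explicit factor $\veps$ cannot be exploited so crudely, and a finer argument is required.

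The key idea is to group one velocity factor together with $s_\veps$ rather than to treat $s_\veps$ in isolation. Expanding the integrand componentwise as a finite sum of terms of the form $\bigl(s_\veps\,u_\veps^i\bigr)\,\bigl(u_\veps^j\,\partial_i\psi^j\bigr)$, I would invoke Proposition~\ref{p:s-uniform}: having fixed $\g\in\,]0,1[\,$ and the associated exponents $0<\theta<1$ and $\g<k<1$, that proposition supplies, for any $\delta>0$,
$$
\bigl\|s_\veps\,u_\veps\bigr\|_{L^2_T(H^{-k-\delta})}\,=\,\veps^\theta\,\bigl\|\veps^{-\theta}\,s_\veps\,u_\veps\bigr\|_{L^2_T(H^{-k-\delta})}\,\leq\,C\,\veps^\theta\,.
$$
Since $k<1$, one may choose $\delta>0$ so small that $k+\delta<1$. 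Then, as $\psi$ is smooth and compactly supported, the companion factor is controlled uniformly: multiplication by a smooth compactly supported function preserves $H^1$, and $H^1\hra H^{k+\delta}$, so that $\bigl\|u_\veps^j\,\partial_i\psi^j\bigr\|_{L^2_T(H^{k+\delta})}\leq C\,\|u_\veps\|_{L^2_T(H^1)}\leq C$ by \eqref{ub:u_L^p}.

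The conclusion would then follow by duality. Each term is the $L^2$ integral of two genuinely integrable functions (recall $s_\veps\in L^\infty$ and $u_\veps\in L^2$), hence coincides with the duality pairing between $H^{-k-\delta}$ and $H^{k+\delta}$; Cauchy--Schwarz in time gives
$$
\left|\int^T_0\!\!\int_\Omega s_\veps\,u_\veps\otimes u_\veps:\nabla\psi\,dx\,dt\right|\,\leq\,C\,\bigl\|s_\veps\,u_\veps\bigr\|_{L^2_T(H^{-k-\delta})}\,\bigl\|u_\veps\,\nabla\psi\bigr\|_{L^2_T(H^{k+\delta})}\,\leq\,C\,\veps^\theta\,\longrightarrow\,0\,.
$$
The main obstacle, and the reason a dedicated argument is needed here rather than a one-line adaptation of Lemma~\ref{l:1_conv}, is that one cannot pass to the limit in the product $\bigl(s_\veps u_\veps\bigr)\cdot\bigl(u_\veps\nabla\psi\bigr)$ by weak convergence alone: both factors converge only weakly, and the product of two weakly convergent sequences need not converge to the product of the limits. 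What rescues the argument is that Proposition~\ref{p:s-uniform} delivers not merely weak convergence but the explicit rate $\bigl\|s_\veps u_\veps\bigr\|_{L^2_T(H^{-k-\delta})}=O(\veps^\theta)$, i.e. \emph{strong} convergence to $0$ with a power of $\veps$ to spare (itself obtained there by interpolating the rough bound \eqref{ub:sigma_e} against the H\"older-in-time control \eqref{ub:s-C^g}); paired with a companion factor that is only uniformly bounded in the dual space, this strong decay is precisely what forces the limit to vanish.
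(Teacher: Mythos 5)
Your proof is correct, but it takes a genuinely different route from the paper's. The paper keeps $s_\veps$ isolated: it uses the qualitative strong convergence $s_\veps\ra0$ in $L^\infty_T(H^{-\g}_{\rm loc})$ coming from the compactness property \eqref{cv:strong-rho}, places the tensor $u_\veps\otimes u_\veps$ in $L^1_T(H^{1-\delta})$ by a paraproduct argument (Corollary \ref{c:product} (v) together with the embedding $H^1\hra B^0_{\infty,\infty}$), and then concludes from the product law of Corollary \ref{c:product} (i) that $s_\veps\,u_\veps\otimes u_\veps\rightharpoonup0$ in $L^1_T(H^{-\g-\delta})$, up to extraction of a subsequence. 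You instead regroup the trilinear term as $(s_\veps u_\veps)\cdot(u_\veps\,\nabla\psi)$ and cash in the quantitative content of Proposition \ref{p:s-uniform}, namely $\|s_\veps u_\veps\|_{L^2_T(H^{-k-\delta})}=O(\veps^\theta)$ with $k+\delta<1$, paired by elementary $H^{-k-\delta}\times H^{k+\delta}$ duality against $u_\veps\nabla\psi$, which is uniformly bounded in $L^2_T(H^{k+\delta})$ because multiplication by the smooth compactly supported $\nabla\psi$ preserves $H^1\hra H^{k+\delta}$; your justification of the duality identity (both factors lie in $L^2$ in space for a.e.\ time, so the integral coincides with the pairing) is also sound. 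What your route buys: no paraproduct bound on $u_\veps\otimes u_\veps$ is needed, no subsequence extraction (the whole family converges), and you obtain an explicit rate $O(\veps^\theta)$. What the paper's route buys: it only requires soft strong convergence of $s_\veps$ alone, not the rate --- although the paper's own proof does cite Proposition \ref{p:s-uniform} parenthetically, so the two arguments draw on overlapping material. One cosmetic imprecision: the uniform bound $\|u_\veps\|_{L^2_T(H^1)}\leq C$ that you attribute to \eqref{ub:u_L^p} in fact also relies on \eqref{ub:u_L^2} (or on the Gagliardo--Nirenberg argument on $\T^2$), as established in Paragraph \ref{sss:b-vel}; this does not affect the validity of your argument.
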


\begin{proof}
We decompose the density, according to our \textsl{ansatz}, as $\rho_\veps\,=\,\rho_0\,+\,s_\veps$.
Recall that, by \eqref{cv:strong-rho} (keep in mind also \eqref{ub:s-C^g} and Proposition \ref{p:s-uniform}), we know that, up to passing
to subsequences, $(s_\veps)_\veps$ strongly converges to $0$ in e.g. $L^\infty_T(H^{-\g}_{\rm loc})$ for any~$0<\g<1$.

On the other hand, uniform bounds for $(u_\veps)_\veps$ in $L^2_T(H^1)$ and a paraproduct decomposition immediately imply that
$u_\veps\otimes u_\veps$ is uniformly bounded in $L^1_T(H^{1-\delta})$, for all $\delta>0$. Here, we have used Corollary \ref{c:product} (v) in the Appendix, together with the continuous embedding
$H^1\hra B^{0}_{\infty,\infty}$.

Therefore, by a direct application of Corollary \ref{c:product} (i) in the Appendix, we gather that the quantity $\bigl(\rho_\veps u_\veps\otimes u_\veps\bigr)_\veps$ weakly converges to $0$
(up to a suitable extraction of a subsequence) in the space $L^1_T(H^{-\g-\delta})$, for any $\delta>0$ arbitrarily small.
\end{proof}

The next step consists in regularizing the velocity fields in the convection term: thanks to the smoothness of $\rho_0$, we can establish an analogue of Lemma \ref{l:uxu_conv},
whose proof is exactly the same so is omitted.
\begin{lemma} \label{l:uxu_dens}
For any test function $\psi\,\in\,\mc{D}\bigl([0,T[\,\times\Omega\bigr)$, one has
$$
\lim_{M\ra+\infty}\;\limsup_{\veps\ra0^+}\;
\left|\int^T_0\int_\Omega\rho_0\,u_\veps\otimes u_\veps:\nabla\psi\,dx\,dt\,-\,\int^T_0\int_\Omega\rho_0\,u_{\veps,M}\otimes u_{\veps,M}:\nabla\psi\,dx\,dt\right|\,=\,0\,.
$$
\end{lemma}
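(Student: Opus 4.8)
The plan is to reproduce the argument of Lemma~\ref{l:uxu_conv} almost verbatim, the only new feature being the smooth factor $\rho_0$, which is harmless since $\rho_0\in\mc C^2_b$ and in particular $\rho_0\in L^\infty(\Omega)$. First I would use the telescoping identity
$$
u_\veps\otimes u_\veps\,-\,u_{\veps,M}\otimes u_{\veps,M}\,=\,(\Id-S_M)u_\veps\otimes u_\veps\,+\,u_{\veps,M}\otimes(\Id-S_M)u_\veps\,,
$$
so that the difference of the two integrals splits into $J_1+J_2$, with
$$
J_1\,:=\,\int^T_0\!\!\int_\Omega(\Id-S_M)u_\veps\otimes u_\veps:\rho_0\,\nabla\psi\,dx\,dt\,,\qquad
J_2\,:=\,\int^T_0\!\!\int_\Omega u_{\veps,M}\otimes(\Id-S_M)u_\veps:\rho_0\,\nabla\psi\,dx\,dt\,.
$$
The key point is that $\rho_0\,\nabla\psi$ is again a smooth, bounded, matrix-valued function, compactly supported in time, i.e. precisely of the type against which we tested in Lemma~\ref{l:uxu_conv}.

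Next I would invoke the frequency-localization estimate (see \eqref{eq:LP-Sob} and Lemma~\ref{l:Id-S}) together with the uniform bounds of Subsection~\ref{ss:bounds}: since $\bigl(u_\veps\bigr)_\veps$ is bounded in $L^2_T(H^1)$, one has
$$
\left\|(\Id-S_M)u_\veps\right\|_{L^2_T(L^2)}\,\leq\,2^{-M}\,\left\|u_\veps\right\|_{L^2_T(H^1)}\,\leq\,C\,2^{-M}\,,
$$
uniformly in $\veps\in\,]0,1]$. Combining this smallness with the uniform bounds $\|u_\veps\|_{L^2_T(L^2)}\leq C$ and $\|u_{\veps,M}\|_{L^2_T(L^2)}=\|S_Mu_\veps\|_{L^2_T(L^2)}\leq C$ (recall \eqref{ub:u_L^2} on $\R^2$, and the consequence of \eqref{ub:u_L^p} with Gagliardo--Nirenberg on $\T^2$), Hölder's inequality in space and Cauchy--Schwarz in time give
$$
|J_1|\,+\,|J_2|\,\leq\,C\,\|\rho_0\|_{L^\infty}\,\|\nabla\psi\|_{L^\infty}\,\left\|(\Id-S_M)u_\veps\right\|_{L^2_T(L^2)}\,\leq\,C'\,2^{-M}\,,
$$
with $C'$ independent of $\veps$. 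Taking first $\limsup_{\veps\ra0^+}$ (the bound being uniform in $\veps$) and then $M\ra+\infty$ yields the claim.

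I do not expect any serious obstacle here: the argument is identical in structure to that of Lemma~\ref{l:uxu_conv}, and the only line requiring a word of justification is that multiplication by $\rho_0$ does not spoil the estimates, which is immediate from $\rho_0\in L^\infty(\Omega)$ (the full $\mc C^2_b$ regularity is not even needed at this stage). Should one prefer the low-regularity, duality-based formulation used elsewhere in this section --- for instance in Lemma~\ref{l:dens_conv}, where $u_\veps\otimes u_\veps$ is controlled in $L^1_T(H^{1-\delta})$ via Corollary~\ref{c:product} --- one may alternatively regard $\rho_0\,\nabla\psi$ as an element of $L^\infty_T(H^{1+\alpha})$ and pair it with the products through the continuity of the product map; but this refinement is superfluous, since $\psi$ is smooth and the plain Hölder estimate above already closes the argument. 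This is why the proof is omitted in the main text.
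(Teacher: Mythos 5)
Your proof is correct and follows essentially the same route as the paper, which omits the proof of this lemma precisely because it is identical to that of Lemma~\ref{l:uxu_conv}: the same telescoping decomposition, the same uniform smallness of $(\Id-S_M)u_\veps$ in a norm below $H^1$ coming from \eqref{eq:LP-Sob} and the $L^2_T(H^1)$ bound, with the factor $\rho_0$ absorbed into the test function via its $L^\infty$ bound, exactly as you observe. Your only (harmless) deviation is to take the smallness at the $L^2$ level (gain $2^{-M}$) rather than in $H^{1-\delta}$ (gain $2^{-\delta M}$) as in the paper's Lemma~\ref{l:uxu_conv}, which changes nothing in the conclusion.
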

Thanks to the previous statements, we  just have  to study, for any fixed $M\in\N$, the convergence for $\veps\ra0$ of the integral
$$
-\int^T_0\int_\Omega\rho_0\,u_{\veps,M}\otimes u_{\veps,M}:\nabla\psi\,dx\,dt\,=\,\int^T_0\int_\Omega\div\bigl(\rho_0\,u_{\veps,M}\otimes u_{\veps,M}\bigr)\cdot\psi\,dx\,dt\,,
$$
where, as previously, we have integrated by parts since now we have enough smoothness in the space variable.
We proceed in several steps.

\paragraph*{Step 1.}
By direct differentiation, we  obtain (because $\div u_{\veps,M}=0$)
\begin{eqnarray} 
\div\bigl(\rho_0\,u_{\veps,M}\otimes u_{\veps,M}\bigr) & = & \rho_0\,u_{\veps,M}\cdot\nabla u_{\veps,M}\,+\,
u_{\veps,M}\cdot\nabla\rho_0\,u_{\veps,M} \label{eq:dens_convect} \\
 & = & \frac{1}{2}\,\rho_0\,\nabla\left|u_{\veps,M}\right|^2\,+\,\rho_0\,\omega_{\veps,M}\,u_{\veps,M}^\perp\,+\,u_{\veps,M}\cdot\nabla\rho_0\,u_{\veps,M}\,. \nonumber
\end{eqnarray}

We remark that the first term in the right-hand side of the last equality gives rise to a contribution of the type $\rho_0\,\nabla\Gamma$ in the limit. Analogously, the same fact
happens for any term of the form $\Lambda_{\veps,M}\,\nabla\rho_0$ where~$\Lambda_{\veps,M}$ is any smooth enough distribution: indeed, if we take a test vector field $\psi$ of zero divergence and
we integrate by parts, we get
$$
\int^T_0\int_\Omega\Lambda_{\veps,M}\,\nabla\rho_0\cdot\psi\,dx\,dt\,=\,\int^T_0\int_\Omega\Lambda_{\veps,M}\;\div\!\left(\rho_0\,\psi\right)\,dx\,dt\,=\,
-\int^T_0\int_\Omega\rho_0\,\nabla\Lambda_{\veps,M}\cdot\psi\,dx\,dt\,.
$$
For notational convenience, from now on we will generically denote by $\Gamma_{\veps,M}$ any vector field which can be written under the form
\begin{equation} \label{eq:Gamma}
\Gamma_{\veps,M}\,=\,\rho_0\,\nabla\Lambda^{(1)}_{\veps,M}\,+\,\Lambda^{(2)}_{\veps,M}\,\nabla\rho_0\,,
\end{equation}
with
\begin{equation} \label{cv:Gamma}
\lim_{M\ra+\infty}\,\lim_{\veps\ra0^+}\,\int^T_0\int_\Omega\Gamma_{\veps,M}\,\cdot\,\psi\,dx\,dt\,=\,\lan\rho_0\,\nabla\Gamma\,,\,\psi\ran_{\mc D'\times\mc D}
\end{equation}
for all $\psi\,\in\,\mc D\bigl([0,T[\,\times\Omega\bigr)$ such that $\div\psi=0$, and where $\Gamma$ is a   distribution over $[0,T[\,\times\Omega$.

On the other hand, we will generically denote by $\mc{R}_{\veps,M}$ any remainder, i.e. any term satisfying the property
\begin{equation} \label{eq:remainder}
\lim_{M\ra+\infty}\,\limsup_{\veps\ra0^+}\,\left|\int^T_0\int_\Omega \mc{R}_{\veps,M}\,\cdot\,\psi\,dx\,dt\right|\,=\,0
\end{equation}
for all divergence free test functions $\psi\,\in\,\mc{D}\bigl([0,T[\,\times\Omega\bigr)$.

With these notations, relation \eqref{eq:dens_convect} can be written under the form
$$
\div\bigl(\rho_0\,u_{\veps,M}\otimes u_{\veps,M}\bigr)\,=\,\Gamma_{\veps,M}\,+\,\rho_0\,\omega_{\veps,M}\,u_{\veps,M}^\perp\,+\,u_{\veps,M}\cdot\nabla\rho_0\,u_{\veps,M}\,.
$$

\paragraph*{Step 2.} Let us focus on the vorticity term: by use of Corollary \ref{c:rot_u-V} and especially of the second decomposition for $\eta_{\veps,M}$, we can write
$$
\rho_0\,\omega_{\veps,M}\,u_{\veps,M}^\perp\,=\,\eta_{\veps,M}\,u_{\veps,M}^\perp\,-\,u_{\veps,M}\cdot\nabla^\perp\rho_0\,u_{\veps,M}^\perp\,-\,
\veps^\theta\,\curl\z_{\veps,M}\,u_{\veps,M}^\perp\,-\,\curl h_{\veps,M}\,u_{\veps,M}^\perp\,,
$$
where, in view of Proposition \ref{p:u-V} and of estimates \eqref{ub_dens:u_2}, the last two terms contribute as remainders, in the sense of relation \eqref{eq:remainder}.
Plugging this expression into the equation for the convective term, we arrive at
\begin{equation} \label{eq:dens_convect_2}
\div\bigl(\rho_0\,u_{\veps,M}\otimes u_{\veps,M}\bigr)\,=\,\Gamma_{\veps,M}\,+\,\mc{R}_{\veps,M}\,+\,\eta_{\veps,M}\,u^\perp_{\veps,M}\,-\,
u_{\veps,M}\cdot\nabla^\perp\rho_0\,u_{\veps,M}^\perp\,+\,u_{\veps,M}\cdot\nabla\rho_0\,u_{\veps,M}\,.
\end{equation}

\paragraph*{Step 3.}
Let us concentrate now on the term
\begin{equation} \label{def:X}
X_{\veps,M}\,:=\,-\,\left(u_{\veps,M}\cdot\nabla^\perp\rho_0\right)\,u_{\veps,M}^\perp\,+\,\left(u_{\veps,M}\cdot\nabla\rho_0\right)\,u_{\veps,M}\,.
\end{equation}
For reasons which will appear clear in a while (see also~\cite{G-SR_2006}), let us introduce a function $b\in\mc{C}^{\infty}_0(\R^2)$, with $0\leq b(y)\leq1$, such that
$b\equiv1$ on $\left\{|y|\leq1\right\}$ and $b\equiv0$ on $\left\{|y|\geq2\right\}$. For any $x\in\Omega$, we define
\begin{equation} \label{eq:b_M}
b_M(x)\,:=\,b\bigl(2^{M/2}\,\nabla\rho_0(x)\bigr)\,.
\end{equation}
To begin with, we notice that, once a $2<q<+\infty$ is fixed, for any compact set $K\subset\Omega$ we can estimate
$$
\left\|b_M\,X_{\veps,M}\right\|_{L^1([0,T]\times K)}\,\leq\,C\,\left\|X_{\veps,M}\right\|_{L^1_T(L^{q/2})}\,
\left(\mu\left\{x\in\R^2\;\bigl|\;\left|\nabla\rho_0(x)\right|\,\leq\,2^{1-M/2}\right\}\right)^{\!(q-2)/q}\,.
$$
Since we can bound $\left\|X_{\veps,M}\right\|_{L^1_T(L^{q/2})}$ by the quantity $C\,\left\|\nabla\rho_0\right\|_{L^\infty}\,\left\|u_{\veps,M}\right\|^2_{L^2_T(L^q)}$,
which is controlled by a uniform constant in view of \eqref{ub:u_L^p} and Sobolev embeddings,
assumption \eqref{eq:non-crit} tells us that the term $b_{M}\,X_{\veps,M}$ is a remainder in the sense of relation \eqref{eq:remainder}.

On the other hand, on the support of $1-b_M$, the vector $\nabla\rho_0$ is far from $0$, so that we can decompose $u_{\veps,M}$ along the basis of $\R^2$
(conveniently renormalized) $\left\{\nabla\rho_0\,,\,\nabla^\perp\rho_0\right\}$.
More precisely, we can write
\begin{eqnarray*}
(1-b_M)\,u_{\veps,M}\,=\,(1-b_M)\,\frac{1}{|\nabla\rho_0|^2}\left(\bigl(u_{\veps,M}\cdot\nabla\rho_0\bigr)\,\nabla\rho_0\,+\,
\bigl(u_{\veps,M}\cdot\nabla^\perp\rho_0\bigr)\,\nabla^\perp\rho_0\right) \\[1ex]
(1-b_M)\,u^\perp_{\veps,M}\,=\,(1-b_M)\,\frac{1}{|\nabla\rho_0|^2}\left(\bigl(u^\perp_{\veps,M}\cdot\nabla\rho_0\bigr)\,\nabla\rho_0\,+\,
\bigl(u_{\veps,M}\cdot\nabla\rho_0\bigr)\,\nabla^\perp\rho_0\right)\,.
\end{eqnarray*}

Notice that, in view of \eqref{eq:Gamma}, we can write the terms in $\nabla\rho_0$ in the generic form $\Gamma_{\veps,M}$.
Therefore, in the end, by definition \eqref{def:X} of $X_{\veps,M}$ we get
\begin{eqnarray*}
(1-b_M)\,X_{\veps,M} & = & \Gamma_{\veps,M}\,-\,(1-b_M)\,\frac{1}{|\nabla\rho_0|^2}\,\bigl(u_{\veps,M}\cdot\nabla^\perp\rho_0\bigr)\,\bigl(u_{\veps,M}\cdot\nabla\rho_0\bigr)\,\nabla^\perp\rho_0\,+ \\
& & \qquad+\,(1-b_M)\,\frac{1}{|\nabla\rho_0|^2}\,\bigl(u_{\veps,M}\cdot\nabla\rho_0\bigr)\,
\bigl(u_{\veps,M}\cdot\nabla^\perp\rho_0\bigr)\,\nabla^\perp\rho_0 \\
& = & \Gamma_{\veps,M}\,,
\end{eqnarray*}
due to the very special cancellations of the two terms in $\nabla^\perp\rho_0$.

Finally, we obtain that $X_{\veps,M}$ contributes as a term of the form \eqref{eq:Gamma}. Hence, in view of \eqref{eq:dens_convect_2}, we have
$$
\div\bigl(\rho_0\,u_{\veps,M}\otimes u_{\veps,M}\bigr)\,=\,\Gamma_{\veps,M}\,+\,\mc{R}_{\veps,M}\,+\,\eta_{\veps,M}\,u^\perp_{\veps,M}\,.
$$

\paragraph*{Step 4.}
In order to treat the last term in the right-hand side of the previous equation, we resort again to the function $b_M$ defined in \eqref{eq:b_M} above.
We also exploit the first decomposition of $\eta_{\veps,M}$ given in Corollary \ref{c:rot_u-V}.

Thanks to the properties stated in Corollary \ref{c:rot_u-V}, for any $T>0$ and any compact set $K\subset\Omega$, it is an easy matter to estimate
\begin{eqnarray*}
& & \hspace{-0.5cm}
\left\|b_M\,\eta_{\veps,M}\,u^\perp_{\veps,M}\right\|_{L^1([0,T]\times K)}\;\leq\;C\,\veps^\theta\,\left\|\eta^{(2)}_{\veps,M}\right\|_{L^2_T(L^2)}\,\left\|u_{\veps,M}\right\|_{L^2_T(L^2)}\,+ \\
& & \qquad\qquad+\,C\,\left\|\eta^{(1)}_{\veps,M}\right\|_{L^2_T(L^2)}\,\left\|u_{\veps,M}\right\|_{L^2_T(L^q)}\,
\left(\mu\left\{x\in\R^2\;\bigl|\;\left|\nabla_h\rho_0(x)\right|\,\leq\,2^{1-M/2}\right\}\right)^{\!1/m} \\
& & \qquad\qquad\qquad\qquad\qquad\leq\;C(M,T)\,\veps^\theta\,+\,C\,\left(\mu\left\{x\in\R^2\;\bigl|\;\left|\nabla_h\rho_0(x)\right|\,\leq\,2^{1-M/2}\right\}\right)^{\!1/m}\,,
\end{eqnarray*}
where $q$ and $m$, both larger than $2$, are linked by the relation $1/q\,+\,1/m\,=\,1/2$. Therefore, in view of assumption \eqref{eq:non-crit}, also this term is a remainder, in the
sense that it gives a contribution of the form \eqref{eq:remainder}.

For the other term $(1-b_M)\,\eta_{\veps,M}\,u^\perp_{\veps,M}$, we decompose the velocity field along $\left\{\nabla\rho_0\,,\,\nabla^\perp\rho_0\right\}$, as done before.
Forgetting about the term along $\nabla\rho_0$, since is a contribution of the form $\Gamma_{\veps,M}$ (recall \eqref{eq:Gamma} above), we can write
\begin{eqnarray*}
(1-b_M)\,\eta_{\veps,M}\,u^\perp_{\veps,M} & = & \Gamma_{\veps,M}\,+\,
(1-b_M)\,\frac{1}{|\nabla\rho_0|^2}\,\eta_{\veps,M}\,\bigl(u_{\veps,M}\cdot\nabla\rho_0\bigr)\,\nabla^\perp\rho_0 \\
& = & \Gamma_{\veps,M}\,+\,\frac{1-b_M}{|\nabla\rho_0|^2}\,\eta_{\veps,M}\left(\div V_{\veps,M}-\veps^\theta\,\div\z_{\veps,M}-\div h_{\veps,M}\right)\nabla^\perp\rho_0\,,
\end{eqnarray*}
where we have used also Corollary \ref{c:rot_u-V}. Notice that the terms presenting $\veps^\theta\,\div\z_{\veps,M}$ and $\div h_{\veps,M}$ give rise to remainders, in the sense of
\eqref{eq:remainder}. For the former item, this fact is obvious thanks to the presence of $\veps^\theta$. As for the latter, it is enough to decompose again
$\eta_{\veps,M}=\eta^{(1)}_{\veps,M}+\veps^\theta\eta^{(2)}_{\veps,M}$ and to use the bounds for $h_{\veps,M}$ given in Proposition \ref{p:u-V}.

Putting all these facts together, we discover that the convective term can be written as
$$
\div\bigl(\rho_0\,u_{\veps,M}\otimes u_{\veps,M}\bigr)\,=\,\Gamma_{\veps,M}\,+\,\mc{R}_{\veps,M}\,+\,
(1-b_M)\,\frac{1}{|\nabla\rho_0|^2}\,\eta_{\veps,M}\,\div V_{\veps,M}\,\nabla^\perp\rho_0\,.
$$

\paragraph*{Step 5.} As final step, we use the first relation in \eqref{reg:approx-w} to write
\begin{eqnarray*}
\eta_{\veps,M}\,\div V_{\veps,M} & = & -\,\veps\,\eta_{\veps,M}\,\d_t\s_{\veps,M}\,=\,-\,\veps\,\bigl(\eta_{\veps,M}-\s_{\veps,M}\bigr)\,\d_t\s_{\veps,M}\,-\,
\frac{\veps}{2}\,\frac{d}{dt}|\s_{\veps,M}|^2 \\
& = & -\,\veps\,\frac{d}{dt}\Bigl(\bigl(\eta_{\veps,M}-\s_{\veps,M}\bigr)\,\s_{\veps,M}\,+\,|\s_{\veps,M}|^2/2\Bigr)\,+\,\veps\,\curl f_{\veps,M}\,\s_{\veps,M}\,,
\end{eqnarray*}
where we also exploited the difference of the two equations in \eqref{reg:approx-w}, recall \eqref{eq:vort_form}.

In the end, in light also of \eqref{reg:source} and uniform bounds, the previous relations prove that
$$
(1-b_M)\,\frac{1}{|\nabla\rho_0|^2}\,\eta_{\veps,M}\,\div V_{\veps,M}\,\nabla^\perp\rho_0\,=\,\mc{R}_{\veps,M}\,,
$$
which finally implies that
\begin{equation} \label{eq:convect_fin}
\div\bigl(\rho_0\,u_{\veps,M}\otimes u_{\veps,M}\bigr)\,=\,\Gamma_{\veps,M}\,+\,\mc{R}_{\veps,M}\,.
\end{equation}

\begin{rem} \label{r:comp-comp}
We notice here that, \emph{in absence of vacuum}, we could have alternatively worked on a different approximation of the convective term, namely on
$$
\div\left(\frac{1}{\rho_0}\,V_{\veps,M}\otimes V_{\veps,M}\right)\,.
$$
An analogous compensated compactness argument (see also \cite{F-G-GV-N}) would have led us to the same conclusion: namely, the transport term vanishes in the limit, up to a remainder
of the form $\rho_0\,\nabla\Gamma$,  due to the strong constraint which is imposed by having a variable target density.

The advantage of our approach is that here we do not need the bound $\rho_0\geq\rho_*>0$.
\end{rem}

\begin{rem} \label{r:Gamma}
Notice that the terms contributing to $\Gamma_{\veps,M}$ are either quadratic in $u_{\veps,M}$ (see Steps~1 and 3), or they depend on the product
$\eta_{\veps,M}\,u_{\veps,M}$ (see Step 4). In view of Corollary \ref{c:rot_u-V}, we conclude that these terms are uniformly bounded,
both with respect to $\veps$ and $M$, in $L^1_T(L^1)$, up to a remainder of order $O(\veps^\theta)$. So
we can assume that the distribution $\Gamma$ in \eqref{cv:Gamma} is a Radon measure over $[0,+\infty[\,\times\Omega$ and that the convergence holds true
in the weak-$*$ topology of the space~$\mc M([0,T]\times\Omega)$ of Radon measures over $[0,T]\times\Omega$ (see Comments to Chapter 4 in \cite{Brezis}).
\end{rem}

\subsection{Conclusion: passing to the limit} \label{ss:limit}

We are now in position of passing to the limit in the modified weak formulation \eqref{eq:vort_weak}, completing in this way the proof to Theorem \ref{t:dens}.

We have already seen that we can pass to the limit with no difficulty in the terms $V_\veps$, $\s_{\veps}$ and in the viscosity term, as well as in the initial data.
On the other hand, putting together Lemmas~\ref{l:dens_conv} and \ref{l:uxu_dens}, 
relation \eqref{eq:convect_fin} and Remark \ref{r:Gamma}, we gather
the convergence
\begin{equation} \label{eq:conv_convect}
\int^T_0\int_\Omega\rho_\veps\,u_\veps\otimes u_\veps:\nabla\psi\,dx\,dt\;\longrightarrow\;
-\,\lan\Gamma\,,\,\div(\rho_0\,\psi)\ran_{\mc M\times\mc C}\,=\,\lan\rho_0\,\nabla\Gamma\,,\,\psi\ran_{\mc D'\times\mc D}
\end{equation}
when $\veps\ra0$, for all test functions $\psi\,\in\,\mc{D}\bigl([0,T[\,\times\Omega\bigr)$ such that $\div\psi=0$.
In the previous formula, the notation $\lan\cdot,\cdot\ran_{\mc M\times\mc C}$ denotes the duality product between Radon measures and continuous functions.

Therefore, starting from \eqref{eq:vort_weak}, where we have set $\psi\,=\,\nabla^\perp\phi$, in the limit for $\veps\ra0$  we find
\begin{align*}
&-\int^T_0\int_\Omega \rho_0\,u\cdot\d_t\nabla^\perp\phi\,-\,\int^T_0\int_\Omega\s\,\d_t\phi\,- \\
&\qquad-\,\int^T_0\int_\Omega\Gamma\,\div\bigl(\rho_0\,\nabla^\perp\phi\bigr)\,+\,\nu\int^T_0\int_\Omega\nabla u:\nabla\nabla^\perp\phi\,=\,
\int_\Omega m_0\cdot\nabla^\perp\phi(0)\,+\,\int_\Omega r_{0}\,\phi(0)\,,
\end{align*}
where $\phi\,\in\mc D\bigl([0,T[\,\times\Omega\bigr)$ is any test function, with no additional restrictions, and where with a little abuse of notation, we have identified
$$
-\,\int^T_0\int_\Omega\Gamma\,\div\bigl(\rho_0\,\nabla^\perp\phi\bigr)\,:=\,-\,\lan\Gamma\,,\,\div\bigl(\rho_0\,\nabla^\perp\phi\bigr)\ran_{\mc M\times\mc C}\,.
$$
A further integration by parts allows us to arrive (after multiplying by $-1$) at the equation
\begin{align*}
&-\int^T_0\!\!\int_\Omega\Bigl(\curl\bigl(\rho_0\,u\bigr)-\s\Bigr)\cdot\d_t\phi\,dx\,dt\,+ \\
&\qquad+\,\int^T_0\int_\Omega\Gamma\,\div\bigl(\rho_0\,\nabla^\perp\phi\bigr)\,dx\,dt\,+\,\nu\int^T_0\!\!\int_\Omega\nabla\omega:\nabla\phi\,dx\,dt\,=\,
\int_\Omega\Bigl(\curl m_0\,-\,r_0\Bigr)\phi(0)\,dx\,,
\end{align*}
which coincides with the weak formulation of equation \eqref{eq:dens_lim}, in view of \eqref{eq:conv_convect}.

\medskip 
Theorem \ref{t:dens} is thus completely proved.\qed

\subsection{A conditional convergence result} \label{ss:dens-full}

In this subsection, we state and prove a convergence result for the fully non-homogeneous case, where we are able to pass to the limit to the full system,
in which the dynamics of the density fluctuation function and the velocity field are decoupled.

This is just a conditional result, because  very strong assumptions are required on the family of weak solutions: in particular, we need to assume uniform bounds in higher norms for the family
of velocity fields (see in particular conditions (ii) and (iii) in Theorem \ref{t:dens-full} below), which cannot be deduced from classical energy estimates.

The statement is the following.
\begin{thm} \label{t:dens-full}
With the notation and under the assumptions of Theorem {\rm\ref{t:dens}}, assume moreover that $\rho_0\,\in\,W^{3,\infty}(\Omega)$ and that the following conditions hold true:
\begin{enumerate}[(i)]
 \item $\bigl(r_{0,\veps}\bigr)_\veps\,\subset\,H^{1+\beta}(\Omega)$, for some $\beta\in\,]0,1[\,$;
 \item $\bigl(u_\veps\bigr)_\veps\,\subset\,L^{\infty}_{\rm loc}\bigl(\R_+;H^{1}(\Omega)\bigr)\cap L^{2}_{\rm loc}\bigl(\R_+;H^{2}(\Omega)\bigr)$;
 \item $\bigl(u_\veps\bigr)_\veps\,\subset\,\mc C^{0,\g}_{\rm loc}\bigl(\R_+;L^{2}(\Omega)\bigr)$, for some $\g\in\,]0,1[\,$.
\end{enumerate}

Then there exist distributions $\Pi$, $\Gamma_0$ and $\Gamma_1$ over $\R_+\times\Omega$ such that the limit points $\sigma$ and $u$ satisfy the system
\begin{equation} \label{eq:dens-full}
\left\{\begin{array}{l}
        \d_t\s\,+\,u\cdot\nabla\s\,=\,\curl\bigl(\rho_0\,\nabla\Gamma_1\bigr) \\[1ex]
        \rho_0\,\d_tu\,+\,\nabla\Pi\,+\,\rho_0\,\nabla\Gamma_0\,+\,\s\,u^\perp\,-\,\nu\,\Delta u\,=\,0 \\[1ex]
         \div u\,=\,\div\bigl(\rho_0\,u\bigr)\,=\,0\,,
       \end{array}
\right. 
\end{equation}
with initial data respectively $\s_{|t=0}\,=\,r_0$ and $\bigl(\rho_0\,u\bigr)_{|t=0}\,=\,m_0$, where $r_0$ and $m_0$ have been defined in \eqref{eq:conv-initial}.
\end{thm}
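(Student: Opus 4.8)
The plan is to promote the single weak-limit identity \eqref{eq:dens_lim} of Theorem~\ref{t:dens} to the two separate equations in \eqref{eq:dens-full}, using assumptions (i)--(iii) precisely to gain the compactness and regularity on $u_\veps$ and $\sigma_\veps$ that are missing in the general case. The first step is strong convergence of the velocities: by (ii) the family $(u_\veps)_\veps$ is uniformly bounded in $L^\infty_{\rm loc}(\R_+;H^1)\cap L^2_{\rm loc}(\R_+;H^2)$ and by (iii) uniformly H\"older in time with values in $L^2$, so an Ascoli--Arzel\`a argument combined with Rellich's theorem yields $u_\veps\to u$ strongly in $L^2_T(H^s_{\rm loc})$ for every $s<2$. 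In particular $u_\veps\otimes u_\veps\to u\otimes u$, and together with $\rho_\veps\stackrel{*}{\rightharpoonup}\rho_0$ and the a.e. convergence behind \eqref{cv:strong-rho}, one gets $\rho_\veps\,u_\veps\otimes u_\veps\to\rho_0\,u\otimes u$ in $L^1_T(L^1_{\rm loc})$.

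The technical heart is an improved uniform bound on $\sigma_\veps$. Writing $s_\veps=\rho_\veps-\rho_0$ and using (i), the transport equation \eqref{eq:s_e} and the Lipschitz control on $u_\veps$ from (ii), I would propagate $H^{1+\beta}$ regularity on $s_\veps$ uniformly in $\veps$ (note $\|s_\veps(0)\|_{H^{1+\beta}}=\veps\|r_{0,\veps}\|_{H^{1+\beta}}$ and the source $u_\veps\cdot\nabla\rho_0$ is uniformly bounded in $L^2_T(H^{1+\beta})$ since $\rho_0\in W^{3,\infty}$). Then $\eta_\veps=\curl(\rho_0 u_\veps)+\curl(s_\veps u_\veps)$ is uniformly bounded in $L^2_T(H^{\beta})$, while thanks to (ii) the source $\curl f_\veps$ in the vorticity relation \eqref{eq:vort_form} is now uniformly bounded in $L^2_T(H^{-1})$, so $(\eta_\veps-\sigma_\veps)_\veps\subset\mc C^{0,1/2}_T(H^{-1})$. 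Subtracting, $(\sigma_\veps)_\veps$ is uniformly bounded in $L^2_T(H^{-1})$, far better than \eqref{ub:sigma_e}. This is exactly what makes the products meaningful: since the pairing $H^{-1}\times H^s$ with $1<s<2$ is continuous (Corollary~\ref{c:product}), the weak-$\ast$ limit of $\sigma_\veps$ against the strongly convergent $u_\veps$ gives $\sigma_\veps u_\veps^\perp\rightharpoonup\sigma u^\perp$ and $\div(\sigma_\veps u_\veps)\rightharpoonup u\cdot\nabla\sigma$.

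Next I would pass to the limit in the momentum equation tested against a \emph{general} divergence-free $\psi$. The time-derivative and viscous terms behave as in Theorem~\ref{t:dens}. For the convective term, Step~1 gives $\int\rho_\veps u_\veps\otimes u_\veps:\nabla\psi\to-\int\div(\rho_0 u\otimes u)\cdot\psi$; using the limit constraints $\div u=\div(\rho_0u)=0$ of Proposition~\ref{p:constr} (so $u\parallel\nabla^\perp\rho_0$ and $u\cdot\nabla\rho_0=0$), one checks $\div(\rho_0 u\otimes u)=\tfrac12\rho_0\nabla|u|^2+\rho_0\,\omega\,u^\perp$ is of the Lagrange-multiplier form $\rho_0\nabla\Gamma$, which is the same cancellation obtained by compensated compactness in Section~\ref{s:dens}, now reached directly from the strong convergence. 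Splitting $\tfrac1\veps\rho_\veps u_\veps^\perp=\tfrac1\veps\rho_0 u_\veps^\perp+\sigma_\veps u_\veps^\perp$, the second piece converges to $\sigma u^\perp$ by Step~2; the singular piece, tested against $\psi=\nabla^\perp\phi$ and rewritten through the mass equation as in \eqref{eq:vort_weak}, equals $\int\phi\bigl(\d_t\sigma_\veps+\div(\sigma_\veps u_\veps)\bigr)$ and hence converges, contributing (with the convective part) a term of the form $\rho_0\nabla\Gamma_0$. As the resulting identity holds for all divergence-free $\psi$, de Rham's theorem produces the genuine pressure $\nabla\Pi$ and yields the second line of \eqref{eq:dens-full}.

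Finally, the density equation is obtained most cleanly by consistency: taking $\curl$ of the momentum equation just established gives $\d_t\eta+\curl(\rho_0\nabla\Gamma_0)+u\cdot\nabla\sigma-\nu\Delta\omega=0$, and subtracting this from the vorticity equation \eqref{eq:dens_lim} forces $\d_t\sigma+u\cdot\nabla\sigma=\curl(\rho_0\nabla\Gamma_1)$ with $\Gamma_1:=\Gamma-\Gamma_0$; the conditions $\sigma_{|t=0}=r_0$ and $(\rho_0u)_{|t=0}=m_0$ follow from \eqref{eq:conv-initial} and the weak formulations. I expect the main obstacle to be Step~2: without the uniform $H^{-1}$ control on $\sigma_\veps$ the product $\sigma u^\perp$ is not even defined in the limit, and it is here that the non-energetic, $\veps$-non-uniform assumptions (i)--(iii) are indispensable. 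The second delicate point is the compensated-compactness/de Rham identification of $\Pi,\Gamma_0,\Gamma_1$ with exactly the structure of \eqref{eq:dens-full}, together with its compatibility $\Gamma=\Gamma_0+\Gamma_1$ with \eqref{eq:dens_lim}.
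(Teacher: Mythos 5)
Your overall architecture matches the paper's proof quite closely, and its technical heart is identified correctly. Step 2 is essentially Paragraph \ref{sss:full_dens-var}: the paper likewise uses (i)--(ii) to propagate transport regularity on $s_\veps$ via Proposition 5.2 of \cite{D_2004} (note one can only reach $H^{1+\alpha}$ for $\alpha<\beta$, since $u_\veps\in L^2_T(H^2)$ is not Lipschitz and that propagation result loses derivatives --- harmless for the sequel), and then integrates the vorticity relation \eqref{eq:vort_form} in time to obtain the uniform bound $(\s_\veps)_\veps\subset L^\infty_T(H^{-1})$ (Lemma \ref{l:full-s_e} and Proposition \ref{p:full-sigma_e}). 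Where you genuinely diverge is in how (iii) is exploited: you extract strong convergence $u_\veps\ra u$ in $L^2_T(H^s_{\rm loc})$, $s<2$, and pass to the limit in products as weak $\s_\veps$ against strong $u_\veps$; the paper instead uses (iii) to get compactness of $\curl(\rho_0\,u_\veps)$, hence of $\s_\veps$ itself (via Proposition \ref{p:s-uniform}), and pairs strong $\s_\veps$ against weakly convergent $u_\veps$. Your variant is correct and arguably cleaner at this point.

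There are, however, two genuine gaps. First, the convective term: your claim that at the limit $\div(\rho_0\,u\otimes u)=\frac12\,\rho_0\nabla|u|^2+\rho_0\,\omega\,u^\perp$ ``is of the Lagrange-multiplier form $\rho_0\nabla\Gamma$'' cannot be checked directly. Using $u\cdot\nabla\rho_0=0$ to write $u^\perp$ along $\nabla\rho_0$ gives $\rho_0\,\omega\,u^\perp=\Lambda\,\nabla\rho_0$ with $|\Lambda|\leq\rho_0\,|\omega|\,|u|/|\nabla\rho_0|$, and hypothesis \eqref{eq:non-crit} controls only the \emph{measure} of $\{|\nabla\rho_0|\leq\delta\}$, not the integrability of $1/|\nabla\rho_0|$: so $\Lambda$ need not be locally integrable and $\Gamma$ is not defined this way. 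The alternative --- a de Rham-type theorem for functionals vanishing under the two constraints $\div\psi=\div(\rho_0\psi)=0$ --- is not available in the paper. The paper sidesteps this entirely: in the proof of Theorem \ref{t:dens-full} it simply reuses the $\veps$-level compensated compactness conclusion \eqref{eq:conv_convect} of Section \ref{s:dens}, where the cutoff $b_M$ near the critical set of $\nabla\rho_0$ and the wave system \eqref{reg:approx-w} keep all coefficients uniformly $L^1$ and $\Gamma$ arises as a weak-$*$ limit of Radon measures. Since Theorem \ref{t:dens-full} inherits all assumptions of Theorem \ref{t:dens}, that result is available for free and should be cited rather than redone at the limit level.

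Second, there is a circularity in your bookkeeping of $\Gamma_0$ and $\Gamma_1$. After the rewriting through the mass equation, what you actually know is that $\frac1\veps\int\rho_0\,u_\veps^\perp\cdot\nabla^\perp\phi$ converges to $\pm\int\phi\,\bigl(\d_t\s+u\cdot\nabla\s\bigr)$; the assertion that this limit contributes ``a term of the form $\rho_0\nabla\Gamma_0$'' is precisely what must be proven, yet your final paragraph then \emph{uses} the momentum equation containing $\rho_0\nabla\Gamma_0$ to define $\Gamma_1:=\Gamma-\Gamma_0$. The paper closes this loop with a one-line observation absent from your proposal: writing $u_\veps=\nabla^\perp\Psi_\veps$ yields $u_\veps\cdot\nabla\rho_0=-\curl\bigl(\rho_0\,\nabla\Psi_\veps\bigr)$, whence the weak limit $g$ of $\veps^{-1}u_\veps\cdot\nabla\rho_0$ --- bounded in $W^{-1,\infty}_T(H^{-1})$ thanks to \eqref{eq:full_sigma_e} and Proposition \ref{p:full-sigma_e} --- has the structure $\curl\bigl(\rho_0\nabla\Gamma_1\bigr)$. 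The density equation is then obtained by passing to the limit in \eqref{eq:full_sigma_e} \emph{first}, and only afterwards is the momentum equation assembled, with $\Gamma_0=\Gamma+\Gamma_1$ (your sign convention for $\Gamma$ differs, which is immaterial). With these two repairs --- cite \eqref{eq:conv_convect} for the convective term, and identify $g$ via the stream function before introducing $\Gamma_0$ --- your argument becomes a complete proof, essentially the paper's, with the added correct simplification of strong velocity convergence.
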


It goes without saying that, under conditions (i)-(ii)-(iii), the convergence properties for $\bigl(u_\veps\bigr)_\veps$ and $\bigl(\s_\veps\bigr)_\veps$
stated in Theorem \ref{t:dens-full} can be improved (see also Proposition \ref{p:full-sigma_e} below).
However, our focus here is on obtaining convergence to the full system rather than~(\ref{eq:dens_lim}). In this respect, let us make some comments.

\begin{rem} \label{r:dens-full}
In order to make sense of  the equation on $\sigma_\veps$ and to pass to the limit, we need enough regularity on $\sigma_\veps$ in order to define  the product $\sigma_\veps\,u_\veps$, and to prove uniform
bounds.
Now, the only way to get information for $\sigma_\veps$ is to exploit equation \eqref{eq:vort_form}: assumptions (i) and (ii) of Theorem \ref{t:dens-full}
are designed so as  to get the desired smoothness for all terms appearing in that equation, while assumption (iii) is needed in order to
prove compactness properties for $\bigl(\sigma_\veps\bigr)_\veps$.
\end{rem}

\begin{rem} \label{r:full-assump}
It is well-known that, for $2$-D density-dependent Navier-Stokes equations \emph{in the absence of vacuum}, if the initial velocity field is in $H^1$, then there exists
a weak solution $u\,\in\,\mc C_T(H^1)\cap L^2_T(H^2)$. We refer e.g. to pages 31-32 of \cite{Lions_1} and to comments to Theorem 3.41 in \cite{B-C-D} for precise statements and
further details. Moreover, an $H^{1+\beta}$ assumption on the initial density is required for recovering uniqueness; see e.g. Section 5 of \cite{D_2004}.

Nonetheless, it seems to us not possible to get \emph{uniform bounds} for $\bigl(u_\veps\bigr)_\veps$ in $L^\infty_T(H^1)\cap L^2_T(H^2)$, even in absence of vacuum, using the strategy of proof adopted
in the above mentioned results. The problem is that the Coriolis term, which does not affect basic energy estimates, does affect these higher order type estimates. This is why we have
to assume the \textsl{a priori} bounds  (ii).
\end{rem}

\begin{rem} \label{r:Lagrangian}
As it will be apparent from the proof, see in particular Paragraph \ref{sss:full_conv} below, the regularities of the distributions $\Gamma_0$ and $\Gamma_1$ with respect
to the space variable will be enough to make sense of the product of their gradiens by $\rho_0$ in $\mc D'$.
\end{rem}

The proof of Theorem \ref{t:dens-full} consists in two main steps. In the first one, in Paragraph \ref{sss:full_dens-var}, we  study   the density variations: the new assumptions allow to improve the regularity of the $\sigma_\veps$'s. This turns out to be fundamental in
passing to the limit  $\veps\ra0$, as     shown in Paragraph \ref{sss:full_conv}

\subsubsection{Study of the density variations} \label{sss:full_dens-var}

First of all, let us consider the functions $s_\veps\,:=\,\rho_\veps\,-\,\rho_0$, which satisfy equation \eqref{eq:s_e}. We can establish the following result.
\begin{lemma} \label{l:full-s_e}
Under assumptions {\rm(i)} and {\rm(ii)} of Theorem {\rm\ref{t:dens-full}}, for all $0<\alpha<\beta$, one has the uniform bound
$$
\bigl(s_\veps\bigr)_\veps\;\subset\;L^\infty_{\rm loc}\bigl(\R_+;H^{1+\alpha}(\Omega)\bigr)\,\cap\,\mc C^{0,1/2}_{\rm loc}\bigl(\R_+;H^\alpha(\Omega)\bigr)\,.
$$
In particular, this family is compact e.g. in the space $L^\infty_T(H_{\rm loc}^{\alpha})$ for all fixed times $T>0$. 
\end{lemma}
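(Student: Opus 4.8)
The plan is to read \eqref{eq:s_e} as a transport equation with source,
$$
\d_t s_\veps\,+\,u_\veps\cdot\nabla s_\veps\,=\,g_\veps\,,\qquad g_\veps\,:=\,-\,u_\veps\cdot\nabla\rho_0\,,\qquad (s_\veps)_{|t=0}\,=\,\veps\,r_{0,\veps}\,,
$$
(using $\div u_\veps=0$), and to propagate $H^{1+\alpha}$ regularity of $s_\veps$ for $0<\alpha<\beta$ by exactly the transport estimate already invoked in \eqref{est:hom_r_H}, namely Proposition~5.2 of \cite{D_2004} (see also Theorem~3.33 of \cite{B-C-D}). The crucial point is that, \emph{in two space dimensions}, this estimate replaces the usual factor $\exp\big(C\int_0^t\|\nabla u_\veps\|_{L^\infty}\big)$ --- which is out of reach here, since $u_\veps\in L^2_T(H^2)$ does \emph{not} control $\nabla u_\veps$ in $L^1_T(L^\infty)$ --- by the logarithmic (Vishik--Danchin) factor $\exp\big(C(\int_0^t\|\nabla u_\veps\|_{H^1}\,d\tau)^2\big)$. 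I expect this to be the heart of the matter: the $H^2$ control of $u_\veps$ is borderline for $L^\infty$ of its gradient in $\R^2$, and only the refined transport estimate circumvents it.

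Granting that estimate, I would obtain
$$
\|s_\veps(t)\|_{H^{1+\alpha}}\,\le\,C\,\exp\!\Big(C\big(\textstyle\int_0^t\|\nabla u_\veps\|_{H^1}\,d\tau\big)^2\Big)\Big(\|s_\veps(0)\|_{H^{1+\alpha}}\,+\,\int_0^t\|g_\veps\|_{H^{1+\alpha}}\,d\tau\Big)\,,
$$
and bound the three ingredients uniformly in $\veps$ on $[0,T]$. First, $\int_0^t\|\nabla u_\veps\|_{H^1}\le \sqrt{T}\,\|u_\veps\|_{L^2_T(H^2)}\le C(T)$ by assumption (ii), so the exponential weight is uniformly bounded. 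Second, $\|s_\veps(0)\|_{H^{1+\alpha}}=\veps\,\|r_{0,\veps}\|_{H^{1+\alpha}}\le \veps\,C\le C$ by assumption (i) and $\alpha<\beta$. Third, since $\rho_0\in W^{3,\infty}$ the multiplication by $\nabla\rho_0\in W^{2,\infty}$ is bounded on $H^{1+\alpha}$ (as $1+\alpha\le 2$), whence $\|g_\veps\|_{H^{1+\alpha}}\le C\,\|u_\veps\|_{H^{1+\alpha}}\le C\,\|u_\veps\|_{H^2}$ and $\int_0^t\|g_\veps\|_{H^{1+\alpha}}\le \sqrt T\,\|u_\veps\|_{L^2_T(H^2)}\le C(T)$. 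This yields the uniform bound $(s_\veps)_\veps\subset L^\infty_T(H^{1+\alpha})$.

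For the time regularity, I would use $\d_t s_\veps=-\,u_\veps\cdot\nabla\rho_\veps=-\,u_\veps\cdot\nabla s_\veps+g_\veps$ and show $(\d_t s_\veps)_\veps\subset L^2_T(H^\alpha)$ uniformly. The term $g_\veps$ is already controlled in $L^2_T(H^{1+\alpha})\subset L^2_T(H^\alpha)$; for the genuinely nonlinear term, the product law $H^2(\Omega)\cdot H^\alpha(\Omega)\hookrightarrow H^\alpha(\Omega)$ (a paraproduct estimate, cf. Corollary~\ref{c:product} and the Appendix) gives $\|u_\veps\cdot\nabla s_\veps\|_{H^\alpha}\le C\,\|u_\veps\|_{H^2}\,\|s_\veps\|_{H^{1+\alpha}}$, so that $\int_0^T\|u_\veps\cdot\nabla s_\veps\|_{H^\alpha}^2\le \|u_\veps\|_{L^2_T(H^2)}^2\,\|s_\veps\|_{L^\infty_T(H^{1+\alpha})}^2\le C(T)$. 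From $\d_t s_\veps\in L^2_T(H^\alpha)$ one deduces $\|s_\veps(t)-s_\veps(\tau)\|_{H^\alpha}\le |t-\tau|^{1/2}\,\|\d_t s_\veps\|_{L^2_T(H^\alpha)}$, i.e. the uniform bound $(s_\veps)_\veps\subset \mc C^{0,1/2}_T(H^\alpha)$.

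Finally, compactness is a standard Ascoli--Arzel\`a argument: the family is uniformly bounded in $L^\infty_T(H^{1+\alpha})$ and uniformly $1/2$-H\"older in time with values in $H^\alpha$; since $H^{1+\alpha}(\Omega)$ embeds compactly into $H^\alpha_{\rm loc}(\Omega)$ by the Rellich theorem, the equicontinuity in time together with the pointwise-in-$t$ precompactness in $H^\alpha_{\rm loc}$ yields compactness of $(s_\veps)_\veps$ in $\mc C^0_T(H^\alpha_{\rm loc})$, hence in $L^\infty_T(H^\alpha_{\rm loc})$, as claimed.
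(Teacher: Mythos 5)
Your proof is correct and follows essentially the same route as the paper's: both hinge on Proposition 5.2 of \cite{D_2004} (the transport estimate with the factor $\exp\bigl(C\bigl(\int_0^t\|\nabla u_\veps\|_{H^1}\,d\tau\bigr)^2\bigr)$ and the loss of regularity $\alpha<\beta$, which you rightly identify as the heart of the matter since $u_\veps\in L^2_T(H^2)$ does not control $\nabla u_\veps$ in $L^1_T(L^\infty)$), then bound $\d_t s_\veps$ in $L^2_T(H^\alpha)$ --- the paper via the Banach-algebra property of $H^{1+\alpha}$ applied to $\div(s_\veps\,u_\veps)$, you via the equivalent product law $H^2\cdot H^\alpha\hookrightarrow H^\alpha$ on $u_\veps\cdot\nabla s_\veps$ --- and conclude by Ascoli--Arzel\`a exactly as in the paper. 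One cosmetic slip: as displayed, your transport estimate carries $H^{1+\alpha}$ norms of the datum and source on the right-hand side, whereas Danchin's result genuinely loses derivatives and requires them in $H^{1+\beta}$; since assumption (i) and $g_\veps\in L^2_T(H^2)\hookrightarrow L^2_T(H^{1+\beta})$ supply exactly those bounds (and your estimates of the three ingredients in fact use them), the conclusion is unaffected.
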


\begin{proof}
The starting point is the transport equation \eqref{eq:s_e}.
By assumption, the family of initial data~$\bigl(\veps\,r_{0,\veps}\bigr)_\veps$ is uniformly bounded in $H^{1+\beta}$,
and the family of external forces $\bigl(u_\veps\cdot\nabla\rho_0\bigr)_\veps$ is uniformly bounded in $L^2_T(H^2)\,\hookrightarrow\,L^2_T(H^{1+\beta})$.
Moreover, the family of divergence-free vector fields $\bigl(u_\veps\bigr)_\veps$ is uniformly bounded in $L^2_T(H^{2})$.
Therefore, for all $0<\alpha<\beta$, by an application of Proposition 5.2 of \cite{D_2004} we deduce that~$\bigl(s_\veps\bigr)_\veps\,\subset\,L^\infty_T(H^{1+\alpha})$ for all fixed~$T>0$.

On the other hand, we can write $\d_ts_\veps\,=\,-\div\bigl(s_\veps\,u_\veps\bigr)\,-\,u_\veps\cdot\nabla\rho_0$: because $H^{1+\alpha}$ is a Banach algebra,   we infer
the uniform bound $\bigl(\d_ts_\veps\bigr)_\veps\,\subset\,L^2_T(H^\alpha)$ for all times~$T>0$, which implies that~$\bigl(s_\veps\bigr)_\veps\,\subset\,\mc C^{0,1/2}_T(H^\alpha)$.

Finally, the last assertion of the statement follows by the Ascoli-Arzel\`a theorem and interpolation of the previous two properties. The lemma is   proved.
\end{proof}

Thanks to the previous result, we can improve the regularity for the functions $\sigma_\veps$.
\begin{prop} \label{p:full-sigma_e}
Under assumptions {\rm(i)} and {\rm(ii)} of Theorem {\rm\ref{t:dens-full}}, one has the uniform bound
$$
\bigl(\s_\veps\bigr)_\veps\;\subset\;L^\infty_{\rm loc}\bigl(\R_+;H^{-1}(\Omega)\bigr)\,.
$$
In particular, there holds $
\bigl(\s_\veps\,u_\veps\bigr)_\veps\;\subset\;L^2_{\rm loc}\bigl(\R_+;H^{-1}(\Omega)\bigr)
$.

Moreover, under assumption {\rm(iii)} $\bigl(\s_\veps\bigr)_\veps$ is compact in $L^\infty_T(H_{\rm loc}^{-1-k-\delta})$, for any~$0<\delta<1$ and for all times $T>0$,
where $k$ is the index fixed in Proposition~{\rm\ref{p:s-uniform}}. In particular, one gathers also the convergence property
$$
\sigma_\veps\,u_\veps\,\rightharpoonup\,\s\,u\quad \mbox{ in }\quad L^2_T(H_{\rm loc}^{-1-k-\delta})\,.
$$
\end{prop}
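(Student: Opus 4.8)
The plan is to extract all information on $\s_\veps$ from the vorticity relation \eqref{eq:vort_form}, i.e. $\d_t(\eta_\veps-\s_\veps)=\curl f_\veps$, feeding into it the improved regularity granted by assumptions (i)--(ii). First I would integrate in time and write $\s_\veps(t)=\eta_\veps(t)-\bigl(\curl m_{0,\veps}-r_{0,\veps}\bigr)-\int_0^t\curl f_\veps\,d\tau$, then bound each term uniformly in $H^{-1}$. For $\eta_\veps=\curl(\rho_\veps u_\veps)$ I would use $\rho_\veps=\rho_0+s_\veps$ together with Lemma~\ref{l:full-s_e} (which gives $s_\veps\in L^\infty_T(H^{1+\alpha})$) and assumption (ii) to get $\rho_\veps u_\veps\in L^\infty_T(H^1)$, hence $\eta_\veps\in L^\infty_T(L^2)\hookrightarrow L^\infty_T(H^{-1})$; the data terms are controlled by (i) and $m_{0,\veps}\in L^2$. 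The delicate point is the source term: writing $\curl f_\veps=-\curl\div(\rho_\veps u_\veps\otimes u_\veps)+\nu\,\Delta\omega_\veps$, assumption (ii) gives $\omega_\veps\in L^2_T(H^1)$, so $\Delta\omega_\veps\in L^2_T(H^{-1})$, while a paraproduct estimate (Corollary~\ref{c:product}) using $u_\veps\in L^2_T(H^2)\cap L^\infty_T(H^1)$ and the $W^{1,\infty}$-type control on $\rho_\veps$ yields $\rho_\veps u_\veps\otimes u_\veps\in L^2_T(H^1)$, whence $\curl f_\veps\in L^2_T(H^{-1})$ and $\int_0^t\curl f_\veps\,d\tau\in\mc C^{0,1/2}_T(H^{-1})$. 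This proves $(\s_\veps)_\veps\subset L^\infty_T(H^{-1})$; the product statement $(\s_\veps u_\veps)_\veps\subset L^2_T(H^{-1})$ then follows from the continuity of the product $H^{-1}\times H^2\to H^{-1}$ (Corollary~\ref{c:product}) and (ii).

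For the compactness statement I would split $\s_\veps=(\s_\veps-\eta_\veps)+\eta_\veps$. The first piece is bounded in $\mc C^{0,1/2}_T(H^{-1})$ by the integrated form of \eqref{eq:vort_form} and the bound just obtained on $\curl f_\veps$; since $H^{-1}$ is compactly embedded in $H^{-1-\delta'}_{\rm loc}$, the Ascoli--Arzel\`a theorem makes it relatively compact in $L^\infty_T(H^{-1-\delta'}_{\rm loc})$. The reason for introducing this ``good unknown'' is that the singular part $\veps^{-1}\div V_\veps$ of $\d_t\eta_\veps$ cancels exactly against that of $\d_t\s_\veps$ in the difference of the two equations of \eqref{eq:waves}, so the fast oscillations disappear from $\eta_\veps-\s_\veps$. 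For $\eta_\veps=\curl(\rho_0 u_\veps)+\curl(s_\veps u_\veps)$ I would then argue directly on $V_\veps=\rho_\veps u_\veps$: by (ii)--(iii) the field $\rho_0 u_\veps$ is bounded in $L^\infty_T(H^1)$ and lies in $\mc C^{0,\g}_T(L^2)$ uniformly, so $\curl(\rho_0 u_\veps)$ is bounded in $L^\infty_T(L^2)$ and equicontinuous in time in $H^{-1}$, hence relatively compact in $L^\infty_T(H^{-1-\delta'}_{\rm loc})$ again by Ascoli--Arzel\`a.

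It remains to treat $\curl(s_\veps u_\veps)$, which I would show converges strongly to $0$. Lemma~\ref{l:full-s_e} gives compactness of $(s_\veps)_\veps$ in $L^\infty_T(H^\alpha_{\rm loc})$, and since $s_\veps=\rho_\veps-\rho_0\to0$ in a weaker topology (as in \eqref{cv:strong-rho}) the only limit point is $0$, so $s_\veps\to0$ strongly in $L^\infty_T(H^\alpha_{\rm loc})$. Multiplying by $(u_\veps)_\veps$, bounded in $L^\infty_T(H^1)$ by (ii), and using continuity of the product (Corollary~\ref{c:product}), one gets $s_\veps u_\veps\to0$ in $L^\infty_T(H^{\alpha-\delta}_{\rm loc})$ and thus $\curl(s_\veps u_\veps)\to0$ in $L^\infty_T(H^{-1-k-\delta}_{\rm loc})$; alternatively one exploits the $\veps^\theta$ gain of Proposition~\ref{p:s-uniform}, which is exactly where the index $k$ enters. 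Collecting the three pieces shows $(\s_\veps)_\veps$ is compact in $L^\infty_T(H^{-1-k-\delta}_{\rm loc})$, so up to a subsequence $\s_\veps\to\s$ strongly there. The final product convergence $\s_\veps u_\veps\rightharpoonup\s u$ in $L^2_T(H^{-1-k-\delta}_{\rm loc})$ then follows from this strong convergence combined with $u_\veps\rightharpoonup u$ in $L^2_T(H^2)$ (a strong-times-weak limit), via continuity of the product $H^{-1-k-\delta}\times H^2\to H^{-1-k-\delta}$, which holds since $1+k+\delta<2$.

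I expect the main obstacle to be twofold. First, establishing $\curl f_\veps\in L^2_T(H^{-1})$ forces the quadratic term $\rho_\veps u_\veps\otimes u_\veps$ into $L^2_T(H^1)$ and hence genuinely requires the non-energy bound $u_\veps\in L^2_T(H^2)$ from (ii); this is the step where the conditional nature of Theorem~\ref{t:dens-full} is really used. Second, obtaining time-compactness of $\eta_\veps$ in spite of the singular factor $1/\veps$ in \eqref{eq:waves} is resolved only through the cancellation built into the good unknown $\eta_\veps-\s_\veps$ together with transferring the remaining time regularity onto $V_\veps=\rho_\veps u_\veps$ by means of the H\"older-in-time control (iii); without (iii) one would lack equicontinuity and Ascoli--Arzel\`a could not be applied.
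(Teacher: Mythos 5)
Your proposal is correct and follows essentially the same route as the paper: integrating \eqref{eq:vort_form} in time, bounding $\eta_\veps$ and $\curl f_\veps$ through Lemma \ref{l:full-s_e}, assumptions (i)--(ii) and the product rules of Corollary \ref{c:product}, and then obtaining compactness from the good unknown $\eta_\veps-\s_\veps$ combined with the splitting of $\eta_\veps$, assumption (iii) and the Ascoli--Arzel\`a theorem, before concluding the product convergence via Corollary \ref{c:product} (iv) with $1+k+\delta<2$. The only (harmless) deviation is your first treatment of $\curl(s_\veps\,u_\veps)$ via the strong convergence $s_\veps\ra0$ in $L^\infty_T(H^{\alpha}_{\rm loc})$ coming from Lemma \ref{l:full-s_e}, whereas the paper uses the $\veps^\theta$ gain of Proposition \ref{p:s-uniform} (which is precisely where the index $k$ of the statement originates) --- a route you also identify as the alternative.
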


\begin{proof}
Integrating equation \eqref{eq:vort_form} in time, for all $\veps\in\,]0,1[\,$ we can write
$$
\sigma_\veps(t,x)\,=\,\veps\,r_{0,\veps}(x)\,+\,\eta_\veps(t,x)\,-\,\curl\bigl(m_{0,\veps}(x)\bigr)\,+\,\int^t_0\curl f_\veps(\tau,x)\,d\tau\,,
$$
where we recall that $\eta_\veps\,:=\,\curl\bigl(\rho_\veps\,u_\veps\bigr)$ and $f_\veps$ is defined in \eqref{def:svf}.

By assumption on the initial data, we have that both $\bigl(r_{0,\veps}\bigr)_\veps$ and $\bigl(\curl m_{0,\veps}\bigr)_\veps$ are uniformly bounded in $H^{-1}$.
Furthermore, writing $\rho_\veps\,=\,\rho_0\,+\,s_\veps$, by Lemma \ref{l:full-s_e} and product rules we deduce that $\bigl(\eta_\veps\bigr)_\veps\,\subset\,L^2_T(L^2)$.

For convenience, let us choose $\alpha=\beta/2$ in Lemma \ref{l:full-s_e}, and keep it fixed throughout   this proof.
We now consider the convection term coming into play in the definition of $f_\veps$.
We remark, by assumption (ii) of Theorem \ref{t:dens-full} and interpolation, that~$\bigl(u_\veps\bigr)_\veps$  is uniformly bounded in~$ L^4_T(H^{3/2})$.
Therefore, the family   $ \rho_0\,u_\veps\otimes u_\veps$ is uniformly bounded in the space~ $L^1_T(H^2)\cap L^2_T(H^{3/2})$, while
the family of $ s_\veps\,u_\veps\otimes u_\veps$ is uniformly bounded in $L^2_T(H^{1+\alpha})$, since we have chosen~$\alpha<1/2$.
On the other hand, $\bigl(\Delta\omega_\veps\bigr)_\veps$  is uniformly bounded in~$L^2_T(H^{-1})$, so we finally deduce the uniform bound~$\bigl(\curl f_\veps\bigr)_\veps\,\subset\,L^2_T(H^{-1})$.

Putting all these properties together,   we deduce that the family $\bigl(\sigma_\veps\bigr)_\veps$ is uniformly bounded in~$L^\infty_T(H^{-1})$, for all $T>0$.
By product rules of Corollary \ref{c:product} (iv), the uniform bounds for~$\bigl(\s_\veps\,u_\veps\bigr)_\veps$ follow immediately.

\medbreak
Remark that, by \eqref{eq:vort_form} again and the above study on $\bigl(f_\veps\bigr)_\veps$, it follows also that $\bigl(\eta_\veps-\sigma_\veps\bigr)_\veps$ is compact
in the space $\mc C^{0,1/2-\delta}_T(H_{\rm loc}^{-1-\delta})$, for all $0<\delta<1$.
On the other hand, thanks to Proposition~\ref{p:s-uniform}, we can write
$$
\eta_\veps\,=\,\curl\bigl(\rho_0\,u_\veps\bigr)\,+\,\veps^\theta\,\curl\left(\veps^{-\theta}\,s_\veps\,u_\veps\right)\,.
$$
Observe that $\bigl(u_\veps\bigr)_\veps\,\subset\,L^\infty_T(H^1)$, so we deduce from Proposition \ref{p:s-uniform} that
$\left(\veps^{-\theta}\,s_\veps\,u_\veps\right)_\veps$ is uniformly bounded in $L^\infty_T(H^{-k-\delta})$, for any $\delta>0$ small
(we could actually improve this property, in view of Lemma \ref{l:full-s_e} above, but it is enough for our purposes).
Therefore, the latter term in the right-hand side converges strongly, as $\veps\ra0$, in the space $L^\infty_T(H^{-k-\delta-1})$ for all $\delta>0$ arbitrarily small. Moreover,
by assumption (iii) we gather that the former term is uniformly bounded in $\mc C^{0,\g}_T(H^{-1})$, and so it is compact (by the Ascoli-Arzel\`a theorem) in
$\mc C^{0,\g-\delta}_T(H_{\rm loc}^{-1-\delta})$, where again $\delta>0$ is arbitrarily small.
 From the previous properties, we immediately deduce that $\bigl(\sigma_\veps\bigr)_\veps$ is compact in the space~$L^\infty_T(H_{\rm loc}^{-1-k-\delta})$, for any $T>0$ fixed and any $\delta>0$ small.

In particular, we can assume, with no loss of generality, that $1+k+\delta<2$. Hence, combining this property with product rules of Corollary \ref{c:product} (iv) and
the previous uniform bounds, we discover that
$\bigl(\sigma_\veps\,u_\veps\bigr)_\veps$ is weakly convergent to the product $\sigma\,u$ in $L^2_T(H_{\rm loc}^{-1-k-\delta})$.
\end{proof}

\subsubsection{Passing to the limit} \label{sss:full_conv}

In view of the previous results, it is possible to pass to the limit in system \eqref{eq:dd-NSC}.

First of all, we notice that we are now allowed to write (still in the weak sense) the equation
\begin{equation} \label{eq:full_sigma_e}
\d_t\s_\veps\,+\,\div\bigl(\s_\veps\,u_\veps\bigr)\,=\,-\,\veps^{-1}\,u_\veps\cdot\nabla\rho_0\,.
\end{equation}
As a consequence of this relation and Proposition \ref{p:full-sigma_e}, arguing exactly in the same way as in establishing \eqref{ub:u-r_0}, we find
the uniform embedding property
$$ 
\left({\veps^{-1}}\,u_\veps\cdot\nabla\rho_0\right)_\veps\;\subset\;W^{-1,\infty}_T(H^{-1})\,+\,L^2_T(H^{-1})\;\hookrightarrow\;
W^{-1,\infty}\bigl([0,T];H^{-1}(\Omega)\bigr)\,,
$$ 
and then this term has to weakly converge, in the previous space, to some~$g\,\in\,W^{-1,\infty}_T(H^{-1})$.
Observe that, to be consistent with   Theorem \ref{t:dens},  $g$ has to be equal to $\curl\bigl(\rho_0\nabla\Gamma_1\bigr)$, for some
scalar distribution $\Gamma_1$. Indeed, since $u_\veps$ is divergence-free, we can write~$u_\veps\,=\,\nabla^\perp\Psi_\veps$, hence
$$
u_\veps\cdot\nabla\rho_0\,=\,\nabla^\perp\Psi_\veps\cdot\nabla\rho_0\,=\,-\,\nabla\Psi_\veps\cdot\nabla^\perp\rho_0\,=\,-\,\curl\bigl(\rho_0\,\nabla\Psi_\veps\bigr)\,.
$$
Therefore, thanks to these computations and Proposition \ref{p:full-sigma_e}, we can pass to the limit in equation~\eqref{eq:full_sigma_e} and find
$$
\d_t\s\,+\,\div\bigl(\s\,u\bigr)\,=\,\curl\bigl(\rho_0\,\nabla\Gamma_1\bigr)\,,
$$
for a suitable distribution $\Gamma_1$ such that $\curl\bigl(\rho_0\,\nabla\Gamma_1\bigr)\in W^{-1,\infty}_T(H^{-1})$ for all $T>0$ fixed.

\medbreak
Let us consider now the weak formulation of the momentum equation:
$$
\int^T_0\!\!\int_{\Omega}\biggl(-\rho_\veps u_\veps\cdot\d_t\psi\,-\,\rho_\veps u_\veps\otimes u_\veps:\nabla\psi\,+\,\frac{1}{\veps}\,\rho_\veps u_\veps^\perp\cdot\psi\,+\,
\nu\nabla u_\veps:\nabla\psi\biggr)\,dx\,dt\,=\,\int_{\Omega}m_{0,\veps}\cdot\psi(0)\,dx\,,
$$
where $\psi$ is a smooth divergence-free test function, compactly supported in $[0,T[\,\times\Omega$.
Obviously, the convergence of the $\d_t$ term, the viscosity term and the initial datum present no difficulty, and it can be performed as in the previous paragraphs.
Furthermore, the computations made in Subsection \ref{ss:convective} allow us to pass to the limit also in the convective term, as done in \eqref{eq:conv_convect},
for a suitable Radon measure $\Gamma\in\mc M([0,T]\times\Omega)$.
So the only changes concern the convergence in the Coriolis term. Using the decomposition~$\rho_{\veps}\,=\,\rho_0\,+\,\veps\,\sigma_\veps$ and the fact that
$u_\veps\,=\,\nabla^\perp\Psi_\veps$, we can write
$$
\frac{1}{\veps}\int^T_0\!\!\int_{\Omega}\rho_\veps u_\veps^\perp\cdot\psi\,=\,\frac{1}{\veps}\int^T_0\!\!\int_{\Omega}\rho_0 u_\veps^\perp\cdot\psi\,+\,
\int^T_0\!\!\int_{\Omega}\s_\veps u_\veps^\perp\cdot\psi\,=\,-\,\frac{1}{\veps}\int^T_0\!\!\int_{\Omega}\rho_0\,\nabla\Psi_\veps\cdot\psi\,+\,\int^T_0\!\!\int_{\Omega}\s_\veps u_\veps^\perp\cdot\psi\,.
$$
Passing to the limit in the latter term in the right-hand side can be done as in the mass equation   above (using Proposition \ref{p:full-sigma_e}); moreover the former term is exactly
the same term (using the fact that~$\psi\,=\,\nabla^\perp\phi$, for some smooth $\phi$) we have encountered in the right-hand side of the mass equation. In the end, we deduce that
$$
\frac{1}{\veps}\int^T_0\!\!\int_{\Omega}\rho_\veps u_\veps^\perp\cdot\psi\,dx\,dt\,\longrightarrow\,\lan\rho_0\nabla\Gamma_1\,,\,\psi\ran_{\mc D'\times\mc D}\,+\,
\int^T_0\!\!\int_{\Omega}\s\,u^\perp\cdot\psi\,dx\,dt\,.
$$

Therefore, after setting $\Gamma_0\,=\,\Gamma\,+\,\Gamma_1$, we have proved that the momentum equation converges, in the weak sense, to the equation
$$
\rho_0\,\d_tu\,+\,\nabla\Pi\,+\,\rho_0\,\nabla\Gamma_0\,+\,\s\,u^\perp\,-\,\nu\,\Delta u\,=\,0\,,
$$
supplemented with the constraints $\div u\,=\,\div\bigl(\rho_0\,u\bigr)\,=\,0$.

\medskip 
 Theorem \ref{t:dens-full} is completely proved. \qed

\appendix

\section{Appendix -- Fourier and harmonic analysis toolbox} \label{app:LP}

We recall here the main ideas of Littlewood-Paley theory, which we exploited in the previous analysis.
We refer e.g. to Chapter 2 of \cite{B-C-D} for details.
For simplicity of exposition, let us deal with the $\R^d$ case; however, the whole construction can be adapted also to the $d$-dimensional torus $\T^d$.

\medbreak
First of all, let us introduce the so called ``Littlewood-Paley decomposition'', based on a non-homogeneous dyadic partition of unity with
respect to the Fourier variable. 
We fix a smooth radial function $\chi$ supported in the ball $B(0,2)$, equal to $1$ in a neighborhood of $B(0,1)$
and such that $r\mapsto\chi(r\,e)$ is nonincreasing over $\R_+$ for all unitary vectors $e\in\R^d$. Set
$\varphi\left(\xi\right)=\chi\left(\xi\right)-\chi\left(2\xi\right)$ and
$\vphi_j(\xi):=\vphi(2^{-j}\xi)$ for all $j\geq0$.

The dyadic blocks $(\Delta_j)_{j\in\Z}$ are defined by\footnote{Throughout we agree  that  $f(D)$ stands for 
the pseudo-differential operator $u\mapsto\mc{F}^{-1}(f\,\mc{F}u)$.} 
$$
\Delta_j\,:=\,0\quad\mbox{ if }\; j\leq-2,\qquad\Delta_{-1}\,:=\,\chi(D)\qquad\mbox{ and }\qquad
\Delta_j\,:=\,\varphi(2^{-j}D)\quad \mbox{ if }\;  j\geq0\,.
$$
We  also introduce the following low frequency cut-off operator:
\begin{equation} \label{eq:S_j}
S_ju\,:=\,\chi(2^{-j}D)\,=\,\sum_{k\leq j-1}\Delta_{k}\qquad\mbox{ for }\qquad j\geq0\,.
\end{equation}
The following classical property holds true: for any $u\in\mc{S}'$, then one has the equality~$u=\sum_{j}\Delta_ju$ in the sense of $\mc{S}'$.
Let us also mention the so-called \emph{Bernstein inequalities}, which explain the way derivatives act on spectrally localized functions.
  \begin{lemma} \label{l:bern}
Let  $0<r<R$.   A constant $C$ exists so that, for any nonnegative integer $k$, any couple $(p,q)$ 
in $[1,+\infty]^2$, with  $p\leq q$,  and any function $u\in L^p$,  we  have, for all $\lambda>0$,
$$
\displaylines{
{\rm supp}\, \widehat u \subset   B(0,\lambda R)\quad
\Longrightarrow\quad
\|\nabla^k u\|_{L^q}\, \leq\,
 C^{k+1}\,\lambda^{k+d\left(\frac{1}{p}-\frac{1}{q}\right)}\,\|u\|_{L^p}\;;\cr
{\rm supp}\, \widehat u \subset \{\xi\in\R^d\,|\, r\lambda\leq|\xi|\leq R\lambda\}
\quad\Longrightarrow\quad C^{-k-1}\,\lambda^k\|u\|_{L^p}\,
\leq\,
\|\nabla^k u\|_{L^p}\,
\leq\,
C^{k+1} \, \lambda^k\|u\|_{L^p}\,.
}$$
\end{lemma}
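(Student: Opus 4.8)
The plan is to derive both estimates from a single principle: a function whose Fourier transform is supported in a ball or an annulus coincides with its own convolution against a fixed, rescaled Schwartz kernel, after which Young's convolution inequality and an elementary scaling computation produce the stated powers of $\lambda$. The only genuinely delicate point, postponed to the end, is that the proportionality constant must be shown to grow at most geometrically in $k$ and to be uniform in the integrability exponents.

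For the first inequality, I would fix once and for all a function $\theta\in\mc C^\infty_c(\R^d)$ with $\theta\equiv1$ on $B(0,R)$, and set $g:=\mc F^{-1}\theta$ and $g_\lambda(x):=\lambda^d\,g(\lambda x)$, so that $\widehat{g_\lambda}(\xi)=\theta(\xi/\lambda)$. Since $\theta(\cdot/\lambda)\equiv1$ on $B(0,\lambda R)\supset\Supp\widehat u$, one has $u=g_\lambda*u$, whence $\nabla^ku=(\nabla^kg_\lambda)*u$. Applying Young's inequality with the exponent $a$ defined by $1/a=1-1/p+1/q$ (which lies in $[1,+\infty]$ precisely because $p\le q$) gives $\|\nabla^ku\|_{L^q}\le\|\nabla^kg_\lambda\|_{L^a}\,\|u\|_{L^p}$. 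A change of variables shows $\|\partial^\alpha g_\lambda\|_{L^a}=\lambda^{k+d(1/p-1/q)}\|\partial^\alpha g\|_{L^a}$ for each $|\alpha|=k$, using $1-1/a=1/p-1/q$; this is exactly the claimed power of $\lambda$, provided the residual factor $\|\partial^\alpha g\|_{L^a}$ is bounded by $C^{k+1}$.

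For the second inequality I would invert the Laplacian on the annulus. Choose $\Phi\in\mc C^\infty_c(\R^d)$ supported in $\{r/2\le|\xi|\le2R\}$ with $\Phi\equiv1$ on $\{r\le|\xi|\le R\}$, so that $\Phi(\cdot/\lambda)\widehat u=\widehat u$. Using the multinomial identity $|\xi|^{2k}=\sum_{|\alpha|=k}\tfrac{k!}{\alpha!}\xi^{2\alpha}$ together with $\xi^\alpha\widehat u=(-i)^k\widehat{\partial^\alpha u}$, I can write $\widehat u=\sum_{|\alpha|=k}m_\alpha(\xi)\,\widehat{\partial^\alpha u}$ with $m_\alpha(\xi)=(-i)^k\tfrac{k!}{\alpha!}\Phi(\xi/\lambda)\,\xi^\alpha|\xi|^{-2k}$, the division being legitimate since $|\xi|$ is bounded below on the support of $\Phi(\cdot/\lambda)$. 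Hence $u=\sum_{|\alpha|=k}\mc F^{-1}(m_\alpha)*\partial^\alpha u$, and Young's inequality with an $L^1$ kernel, together with the scaling $m_\alpha(\xi)=\lambda^{-k}n_\alpha(\xi/\lambda)$ for a $\lambda$-independent $n_\alpha$, yields $\|u\|_{L^p}\le\lambda^{-k}\bigl(\sum_{|\alpha|=k}\|\mc F^{-1}(n_\alpha)\|_{L^1}\bigr)\|\nabla^ku\|_{L^p}$, i.e. the desired lower bound once the bracketed sum is controlled by $C^{k+1}$. The matching upper bound in the annulus case is a special instance of the first inequality with $p=q$.

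The main obstacle is precisely obtaining the geometric constant $C^{k+1}$ uniformly in $k$ and in $p,q\in[1,+\infty]$. For this I would interpolate the $L^a$ norms via $\|h\|_{L^a}\le\|h\|_{L^1}+\|h\|_{L^\infty}$, so that uniformity in $a$ (hence in $p,q$) is automatic. The sup-norm of $\partial^\alpha g$ is bounded by $\|\xi^\alpha\theta\|_{L^1}\le(CR)^k$, since $\theta$ has fixed compact support; the $L^1$ norm is estimated by weighting with $(1+|x|^2)^{(d+1)/2}$, Cauchy--Schwarz and Plancherel, which reduces matters to controlling at most $d+1$ derivatives of $\xi^\alpha\theta$. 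Differentiating the monomial $\xi^\alpha$ up to $d+1$ times produces only polynomial-in-$k$ combinatorial factors, absorbed into the geometric rate; and since the number of multi-indices with $|\alpha|=k$ is at most $(k+1)^{d-1}\le C^k$, summing over $\alpha$ preserves the form $C^{k+1}$. The same bookkeeping, applied to $n_\alpha$ on the fixed annulus $\{r/2\le|\xi|\le2R\}$ where $|\xi|^{-2k}\le(2/r)^{2k}$, controls $\sum_{|\alpha|=k}\|\mc F^{-1}(n_\alpha)\|_{L^1}$ by $C^{k+1}$ and completes the argument.
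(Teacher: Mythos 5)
Your proposal is correct, and it coincides with the standard proof of this result: the paper itself does not prove Lemma \ref{l:bern} but simply refers to Chapter 2 of \cite{B-C-D}, where precisely your argument appears --- convolution with a rescaled kernel plus Young's inequality for the direct estimate, and division by $|\xi|^{2k}$ via the multinomial identity on a fixed annulus for the reverse one, with the same weighted-$L^1$ bookkeeping yielding the geometric constant $C^{k+1}$.
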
   

By use of Littlewood-Paley decomposition, we can define the class of Besov spaces.
\begin{defin} \label{d:B}
  Let $s\in\R$ and $1\leq p,r\leq+\infty$. The \emph{non-homogeneous Besov space}
$B^{s}_{p,r}$ is defined as the subset of tempered distributions $u$ for which
$$
\|u\|_{B^{s}_{p,r}}\,:=\,
\left\|\left(2^{js}\,\|\Delta_ju\|_{L^p}\right)_{j\geq -1}\right\|_{\ell^r}\,<\,+\infty\,.
$$
\end{defin}
Besov spaces are interpolation spaces between Sobolev spaces. In fact, for any $k\in\N$ and~$p\in[1,+\infty]$
we have the following chain of continuous embeddings:
$$
 B^k_{p,1}\hookrightarrow W^{k,p}\hookrightarrow B^k_{p,\infty}\,,
$$
where  $W^{k,p}$ denotes the classical Sobolev space of $L^p$ functions with all the derivatives up to the order $k$ in $L^p$.
Moreover, for all $s\in\R$ we have the equivalence $B^s_{2,2}\equiv H^s$, with
\begin{equation} \label{eq:LP-Sob}
\|f\|_{H^s}\,\sim\,\left(\sum_{j\geq-1}2^{2 j s}\,\|\Delta_jf\|^2_{L^2}\right)^{1/2}\,.
\end{equation}

As an immediate consequence of the first Bernstein inequality, one gets the following embedding result.
\begin{prop}\label{p:embed}
The space $B^{s_1}_{p_1,r_1}$ is continuously embedded in the space $B^{s_2}_{p_2,r_2}$ for all indices satisfying $p_1\,\leq\,p_2$ and
$$
s_2\,<\,s_1-d\left(\frac{1}{p_1}-\frac{1}{p_2}\right)\qquad\mbox{ or }\qquad
s_2\,=\,s_1-d\left(\frac{1}{p_1}-\frac{1}{p_2}\right)\;\;\mbox{ and }\;\;r_1\,\leq\,r_2\,. 
$$
\end{prop}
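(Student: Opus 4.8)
The plan is to reduce everything to the first Bernstein inequality of Lemma~\ref{l:bern} applied block by block, followed by two elementary facts about $\ell^r$ sequence spaces. First I would recall that, for every $j\geq-1$, the dyadic block $\Delta_ju$ has Fourier support contained in a ball $B(0,C2^j)$ (this covers simultaneously the low-frequency block $\Delta_{-1}$, supported in $B(0,2)$, and the annular blocks for $j\geq0$). Hence the first estimate in Lemma~\ref{l:bern}, with $\lam\sim2^j$, exponents $p=p_1$ and $q=p_2$, and $k=0$, yields
$$
\|\Delta_ju\|_{L^{p_2}}\,\leq\,C\,2^{jd\left(\frac{1}{p_1}-\frac{1}{p_2}\right)}\,\|\Delta_ju\|_{L^{p_1}}\,,
$$
with $C$ uniform in $j$. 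Setting $s_2'\,:=\,s_2+d\left(\frac{1}{p_1}-\frac{1}{p_2}\right)$ and multiplying by $2^{js_2}$, this reads
$$
2^{js_2}\,\|\Delta_ju\|_{L^{p_2}}\,\leq\,C\,2^{js_2'}\,\|\Delta_ju\|_{L^{p_1}}\,.
$$
The whole matter is then to compare the $\ell^{r_2}$ norm of the left-hand side with the $\ell^{r_1}$ norm of the sequence $\bigl(2^{js_1}\|\Delta_ju\|_{L^{p_1}}\bigr)_j$, which is precisely $\|u\|_{B^{s_1}_{p_1,r_1}}$.

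In the equality case $s_2=s_1-d(1/p_1-1/p_2)$, that is $s_2'=s_1$, the displayed bound gives $2^{js_2}\|\Delta_ju\|_{L^{p_2}}\leq C\,2^{js_1}\|\Delta_ju\|_{L^{p_1}}$ termwise, and I would conclude by the elementary embedding $\ell^{r_1}\hra\ell^{r_2}$, valid for $r_1\leq r_2$. In the strict case $s_2'<s_1$, I would write $2^{js_2'}\|\Delta_ju\|_{L^{p_1}}=2^{-j(s_1-s_2')}\,a_j$ with $a_j:=2^{js_1}\|\Delta_ju\|_{L^{p_1}}$, and use that $(a_j)_j$ is bounded, with $\sup_j a_j\leq\|u\|_{B^{s_1}_{p_1,r_1}}$ (since $\ell^{r_1}\hra\ell^\infty$). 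Because $s_1-s_2'>0$ and the index runs over $j\geq-1$, the geometric sequence $\bigl(2^{-j(s_1-s_2')}\bigr)_j$ lies in $\ell^{r_2}$ for every $r_2\in[1,+\infty]$, so that factoring out $\sup_j a_j$ yields $\|u\|_{B^{s_2}_{p_2,r_2}}\leq C\,\|(2^{-j(s_1-s_2')})_j\|_{\ell^{r_2}}\,\sup_j a_j\leq C'\,\|u\|_{B^{s_1}_{p_1,r_1}}$.

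There is no genuine obstacle here: the statement is a direct consequence of Bernstein's inequality combined with the ladder of $\ell^r$ embeddings. The only points requiring a little care are that the block estimate must hold \emph{uniformly} in $j$ (which is why one invokes the ball version of Bernstein, so that the borderline block $\Delta_{-1}$ is treated on the same footing as the annular ones), and the correct splitting into the two regimes. The gain of the summable geometric factor $2^{-j(s_1-s_2')}$ in the strict case is exactly what allows one to dispense with any relation between $r_1$ and $r_2$, whereas in the borderline case $s_2'=s_1$ the needed summability is recovered solely through the hypothesis $r_1\leq r_2$.
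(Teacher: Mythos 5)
Your proof is correct and follows exactly the route the paper indicates, namely applying the first Bernstein inequality of Lemma~\ref{l:bern} block by block (with the ball version so that $\Delta_{-1}$ is covered uniformly) and then concluding via the elementary $\ell^r$ embeddings, with the geometric gain $2^{-j(s_1-s_2')}$ handling the strict case and $\ell^{r_1}\hookrightarrow\ell^{r_2}$ the borderline case. Nothing to add: this is the standard argument the paper has in mind when calling the proposition an immediate consequence of Bernstein's inequality.
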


We recall also Lemma 2.73 of \cite{B-C-D}.
\begin{lemma} \label{l:Id-S}
If $1\leq r<+\infty$, for any $f\in B^s_{p,r}$ one has
$$
\lim_{j\ra+\infty}\left\|f\,-\,S_jf\right\|_{B^s_{p,r}}\,=\,0\,.
$$
\end{lemma}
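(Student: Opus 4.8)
The plan is to reduce the statement to the convergence of the tail of a convergent series, exploiting the spectral localization of the dyadic blocks together with the uniform boundedness of $S_j$ on $L^p$. First I would record two elementary facts about the operators $S_j=\chi(2^{-j}D)$. On the one hand, since $\chi\equiv1$ in a neighborhood of $B(0,1)$, the multiplier $\chi(2^{-j}\xi)$ equals $1$ on $\{|\xi|\le 2^j\}$; hence, because $\Delta_\ell f$ has Fourier support contained in $\{|\xi|\le 2^{\ell+1}\}$, one gets the exact cancellation
$$
(\Id-S_j)\,\Delta_\ell f\,=\,0\qquad\text{whenever}\qquad \ell\le j-1\,.
$$
On the other hand, writing $S_jf=2^{jd}\,h(2^j\cdot)*f$ with $h:=\mc{F}^{-1}\chi\in\mc{S}$, Young's inequality yields the uniform bound $\|S_jf\|_{L^p}\le\|h\|_{L^1}\,\|f\|_{L^p}$, where $\|h\|_{L^1}$ is independent of $j$; consequently $\|(\Id-S_j)g\|_{L^p}\le C\,\|g\|_{L^p}$ for a constant $C$ not depending on $j$.

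Next I would compute the Besov norm directly. Since $S_j$ and $\Delta_\ell$ are Fourier multipliers they commute, so $\Delta_\ell(\Id-S_j)f=(\Id-S_j)\Delta_\ell f$. Combining the two facts above gives
$$
\|f-S_jf\|_{B^s_{p,r}}^r\,=\,\sum_{\ell\ge j}2^{\ell s r}\,\|(\Id-S_j)\Delta_\ell f\|_{L^p}^r\,\le\,C^r\sum_{\ell\ge j}2^{\ell s r}\,\|\Delta_\ell f\|_{L^p}^r\,.
$$
The right-hand side is exactly $C^r$ times the tail, starting at index $j$, of the series $\sum_{\ell\ge-1}2^{\ell s r}\,\|\Delta_\ell f\|_{L^p}^r=\|f\|_{B^s_{p,r}}^r$.

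Finally, because $f\in B^s_{p,r}$ this series is convergent, and here the assumption $r<+\infty$ enters decisively: the tail of a convergent series of nonnegative reals tends to $0$, so $\sum_{\ell\ge j}2^{\ell s r}\|\Delta_\ell f\|_{L^p}^r\to0$ as $j\to+\infty$, which yields the claim. I expect the only genuinely delicate point to be the bookkeeping of the spectral supports near the cut-off index, namely the borderline blocks $\ell=j,j+1$ for which $S_j\Delta_\ell f$ equals neither $\Delta_\ell f$ nor $0$; this is precisely why the uniform $L^p$ bound on $\Id-S_j$ is needed, so that these frequencies are still absorbed into the convergent tail. For $r=+\infty$ the argument breaks down — a uniformly bounded sequence of block norms need not have vanishing tail — which is consistent with the failure of the statement in that endpoint case.
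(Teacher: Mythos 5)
Your proof is correct: the exact cancellation $(\Id-S_j)\Delta_\ell f=0$ for $\ell\le j-1$ (from the spectral supports), the uniform $L^p$ bound on $\Id-S_j$ via Young's inequality to absorb the borderline blocks, and the reduction to the vanishing tail of the convergent series $\sum_{\ell}2^{\ell s r}\|\Delta_\ell f\|_{L^p}^r$ are all sound, as is your remark that the argument (and the statement) fails at the endpoint $r=+\infty$. The paper gives no proof of its own, simply citing Lemma 2.73 of \cite{B-C-D}, and your argument is precisely the standard tail-of-a-convergent-series proof given there, so there is nothing to add.
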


Let us now introduce the paraproduct operator (after J.-M. Bony, see \cite{Bony}). Constructing the paraproduct operator relies on the observation that, 
formally, any product  of two tempered distributions $u$ and $v,$ may be decomposed into 
\begin{equation}\label{eq:bony}
u\,v\;=\;T_uv\,+\,T_vu\,+\,R(u,v)\,,
\end{equation}
where we have defined
$$
T_uv\,:=\,\sum_jS_{j-1}u\Delta_j v,\qquad\qquad\mbox{ and }\qquad\qquad
R(u,v)\,:=\,\sum_j\sum_{|j'-j|\leq1}\Delta_j u\,\Delta_{j'}v\,.
$$
The above operator $T$ is called ``paraproduct'' whereas
$R$ is called ``remainder''.
The paraproduct and remainder operators have many nice continuity properties. 
The following ones have been of constant use in this paper (see the proof in e.g. Chapter 2 of \cite{B-C-D}).
\begin{prop}\label{p:op}
For any $(s,p,r)\in\R\times[1,\infty]^2$ and $t>0$, the paraproduct operator 
$T$ maps continuously $L^\infty\times B^s_{p,r}$ in $B^s_{p,r}$ and  $B^{-t}_{\infty,\infty}\times B^s_{p,r}$ in $B^{s-t}_{p,r}$.
Moreover, the following estimates hold:
$$
\|T_uv\|_{B^s_{p,r}}\,\leq\, C\,\|u\|_{L^\infty}\,\|\nabla v\|_{B^{s-1}_{p,r}}\qquad\mbox{ and }\qquad
\|T_uv\|_{B^{s-t}_{p,r}}\,\leq\, C\|u\|_{B^{-t}_{\infty,\infty}}\,\|\nabla v\|_{B^{s-1}_{p,r}}\,.
$$
For any $(s_1,p_1,r_1)$ and $(s_2,p_2,r_2)$ in $\R\times[1,\infty]^2$ such that 
$s_1+s_2>0$, $1/p:=1/p_1+1/p_2\leq1$ and~$1/r:=1/r_1+1/r_2\leq1$,
the remainder operator $R$ maps continuously~$B^{s_1}_{p_1,r_1}\times B^{s_2}_{p_2,r_2}$ into~$B^{s_1+s_2}_{p,r}$.
In the case $s_1+s_2=0$, provided $r=1$, operator $R$ is continuous from $B^{s_1}_{p_1,r_1}\times B^{s_2}_{p_2,r_2}$ with values
in $B^{0}_{p,\infty}$.
\end{prop}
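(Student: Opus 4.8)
The plan is to treat Proposition~\ref{p:op} as a standard exercise in paradifferential calculus, reducing every claim to the interplay of three ingredients: the spectral localization of the elementary pieces of $T$ and $R$, the Bernstein inequalities of Lemma~\ref{l:bern}, and convolution (Young) inequalities for sequences indexed by the dyadic frequency $j$. The guiding principle is that each summand $S_{j-1}u\,\Delta_jv$ defining $T_uv$, and each summand $\Delta_ju\,\Delta_{j'}v$ (with $|j'-j|\leq1$) defining $R(u,v)$, is spectrally supported in an explicit frequency region, so that applying $\Delta_k$ annihilates all but finitely many of them; one is then left to estimate an $\ell^r$ norm of a discrete convolution.

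First I would handle the paraproduct. Since $S_{j-1}u$ is Fourier-supported in a ball of radius $\sim2^{j-1}$ and $\Delta_jv$ in an annulus of size $\sim2^j$, the product $S_{j-1}u\,\Delta_jv$ is supported in an annulus of size $\sim2^j$; hence $\Delta_k(S_{j-1}u\,\Delta_jv)=0$ whenever $|k-j|>N_0$ for a fixed $N_0$. Bounding $\|S_{j-1}u\,\Delta_jv\|_{L^p}\leq\|S_{j-1}u\|_{L^\infty}\|\Delta_jv\|_{L^p}$ and using $\|S_{j-1}u\|_{L^\infty}\leq\|u\|_{L^\infty}$, I obtain $2^{ks}\|\Delta_kT_uv\|_{L^p}\lesssim\|u\|_{L^\infty}\sum_{|j-k|\leq N_0}2^{js}\|\Delta_jv\|_{L^p}$, and taking the $\ell^r$ norm in $k$ of this finite-band convolution gives continuity $L^\infty\times B^s_{p,r}\to B^s_{p,r}$. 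The gradient form follows from the second Bernstein inequality, which turns $2^{js}\|\Delta_jv\|_{L^p}$ into $2^{j(s-1)}\|\nabla\Delta_jv\|_{L^p}$ up to a constant. For the $B^{-t}_{\infty,\infty}$ endpoint I would instead estimate $\|S_{j-1}u\|_{L^\infty}\leq\sum_{k\leq j-2}\|\Delta_ku\|_{L^\infty}\lesssim 2^{jt}\|u\|_{B^{-t}_{\infty,\infty}}$, the geometric sum converging precisely because $t>0$; reinserting this bound and absorbing the factor $2^{(j-k)(t-s)}$, bounded on the band $|j-k|\leq N_0$, yields the announced gain of $t$ derivatives.

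Next I would treat the remainder, where the essential difference is that $\Delta_ju\,\Delta_{j'}v$ with $|j'-j|\leq1$ is supported in a \emph{ball} of radius $\sim2^j$ rather than an annulus, so $\Delta_k$ acts nontrivially only for $k\leq j+N_1$. Writing $R_j:=\sum_{|j'-j|\leq1}\Delta_ju\,\Delta_{j'}v$ and applying H\"older with $1/p=1/p_1+1/p_2$, I would bound $2^{j(s_1+s_2)}\|R_j\|_{L^p}$ by a sequence lying in $\ell^r$, with $1/r=1/r_1+1/r_2$, times $\|u\|_{B^{s_1}_{p_1,r_1}}\|v\|_{B^{s_2}_{p_2,r_2}}$. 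Then $2^{k(s_1+s_2)}\|\Delta_kR\|_{L^p}\lesssim\sum_{j\geq k-N_1}2^{(k-j)(s_1+s_2)}\,2^{j(s_1+s_2)}\|R_j\|_{L^p}$, and the convolution kernel $\bigl(2^{\ell(s_1+s_2)}\mathds{1}_{\{\ell\leq N_1\}}\bigr)_\ell$ is summable exactly when $s_1+s_2>0$, so Young's inequality for series closes the estimate into $B^{s_1+s_2}_{p,r}$.

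The step I expect to be the most delicate is the borderline case $s_1+s_2=0$: there the convolution kernel above is no longer summable, so one cannot land in $B^0_{p,r}$ for general $r$. Instead I would exploit the hypothesis $r=1$, that is $1/r_1+1/r_2=1$, for which H\"older in $j$ gives $\sum_j\|R_j\|_{L^p}\lesssim\|u\|_{B^{s_1}_{p_1,r_1}}\|v\|_{B^{s_2}_{p_2,r_2}}$, controlling $\sup_k\|\Delta_kR\|_{L^p}$ and hence yielding continuity into $B^0_{p,\infty}$, but no better. Keeping careful track of which triple $(s,p,r)$ survives each summation --- and in particular not claiming the full $B^0_{p,1}$ target --- is the only genuinely subtle bookkeeping; everything else is the routine machinery recalled in Chapter~2 of~\cite{B-C-D}, to which I would ultimately refer for the omitted constants.
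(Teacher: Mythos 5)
Your proof is correct and is essentially the paper's own: the paper states Proposition \ref{p:op} without proof, referring to Chapter 2 of \cite{B-C-D}, and your argument --- spectral localization of the summands $S_{j-1}u\,\Delta_jv$ (annulus) and $\Delta_ju\,\Delta_{j'}v$ (ball), Bernstein's inequalities, and Young/H\"older for the resulting dyadic convolutions, with the correct degeneration to $B^0_{p,\infty}$ when $s_1+s_2=0$ and $r=1$ --- is exactly that standard proof. Nothing further is needed.
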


As a corollary of the previous proposition, we deduce the following continuity properties of the product   in Sobolev spaces, which have been used in the course of the analysis.
In the statement, we limit ourselves to the case of space dimension $d=2$,   the only relevant one for this study.
\begin{coroll} \label{c:product}
Let $d=2$.
\begin{itemize}
 \item[(i)] For all $\eta$ and all $\delta$ in $\,]0,1[\,$, such that $1-\eta-\delta>0$, the product is a continuous map from $H^{-\eta}\times H^{1-\delta}$ into $H^{-\eta-\delta}$.
 \item[(ii)] For all $0\leq\eta<1$, the product is a continuous map from $H^{-\eta}\times H^{1}$ into $H^{-\eta-\delta}$ for all $\delta>0$ arbitrarily small.
\item[(iii)] For all $0<\eta\leq1$, the product is a continuous map from $H^{\eta}\times H^{1}$ into $L^{2}$.
\item[(iv)] For all $0<\eta<2$, the product is a continuous map from $H^{-\eta}\times H^{2}$ into $H^{-\eta}$.
\item[(v)] The product is a continuous map from $H^{1}\times H^{1}$ into $H^{1-\delta}$ for all $\delta>0$ arbitrarily small.
\end{itemize}
\end{coroll}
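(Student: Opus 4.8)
The plan is to reduce everything to Bony's decomposition \eqref{eq:bony}, namely $uv = T_u v + T_v u + R(u,v)$, and to feed each of the three pieces into the continuity estimates of Proposition \ref{p:op}, using systematically the identification $B^s_{2,2} = H^s$ together with the Besov embeddings of Proposition \ref{p:embed}. The only dimension-dependent inputs I need are the two-dimensional embeddings $H^s \hra B^{s-1}_{\infty,\infty}$ (valid for every $s$), $H^s \hra L^\infty$ for $s>1$, and the ``loss of one derivative'' embedding $B^\tau_{1,1} \hra B^{\tau-1}_{2,2} = H^{\tau-1}$, all of which are instances of Proposition \ref{p:embed} specialized to $d=2$. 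Each of the five items is then a matter of choosing the indices so that the three contributions land in the prescribed target space.

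For a model computation, take item (iii), with $u \in H^\eta$ and $v \in H^1$. The remainder satisfies $s_1+s_2 = \eta+1>0$ with $1/p = 1/r = 1$, so Proposition \ref{p:op} gives $R(u,v) \in B^{\eta+1}_{1,1} \hra H^\eta \hra L^2$. For the two paraproducts I embed the low-frequency factor into a negative $L^\infty$-based Besov space, $u \in H^\eta \hra B^{\eta-1}_{\infty,\infty}$ and $v \in H^1 \hra B^0_{\infty,\infty} \hra B^{-t}_{\infty,\infty}$ with $t>0$ small, and the second estimate of Proposition \ref{p:op} then places $T_u v$ and $T_v u$ in spaces $H^{s}$ with $s>0$, hence in $L^2$; here the margin $\eta>0$ is exactly what allows the arbitrarily small loss to be absorbed. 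The remaining items follow the identical scheme. In (i)--(ii) one places $u \in H^{-\eta} \hra B^{-\eta-1}_{\infty,\infty}$ and $v \in H^{1-\delta}\hra B^{-\delta}_{\infty,\infty}$ (respectively $v \in H^1 \hra B^0_{\infty,\infty}$), and the remainder summation condition $s_1+s_2 = 1-\eta-\delta>0$ (respectively $1-\eta>0$) is precisely the stated hypothesis. In (iv) one uses instead the \emph{genuine} $L^\infty$ estimate of Proposition \ref{p:op} on $T_v u$, since $v \in H^2 \hra L^\infty$, and the constraint $\eta<2$ is what guarantees $s_1+s_2 = 2-\eta>0$ for the remainder. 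In (v) both paraproducts are handled by embedding each $H^1$ factor into $B^0_{\infty,\infty}$.

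The one genuinely delicate point --- and the source of every arbitrarily small loss $\delta$ appearing in the statement --- is the endpoint nature of the embedding $H^1 \hra B^0_{\infty,\infty}$ in dimension two: an $H^1$ factor is not bounded, so whenever it plays the role of the low-frequency term in a paraproduct I cannot invoke the $L^\infty$ estimate of Proposition \ref{p:op} and must instead sacrifice a small amount of regularity through $B^0_{\infty,\infty} \hra B^{-\delta}_{\infty,\infty}$ with $\delta>0$. This is exactly why items (ii) and (v) carry a $\delta$-loss while (iii) and (iv) do not: in (iii) the slack $\eta>0$ absorbs it, and in (iv) the factor $v \in H^2$ is truly in $L^\infty$. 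The complementary bookkeeping obstacle is to keep the remainder $R(u,v)$ summable, i.e.\ to verify $s_1+s_2>0$ in each case (or, at the endpoint $s_1+s_2=0$, to invoke the $r=1$ refinement of Proposition \ref{p:op}), which dictates the precise ranges of $\eta$ and $\delta$ listed. No further ideas are required: once this index juggling is arranged, every estimate is routine.
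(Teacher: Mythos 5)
Your proposal is correct and takes essentially the same route as the paper's own proof: Bony's decomposition \eqref{eq:bony} fed into Proposition \ref{p:op}, with the identification $H^s=B^s_{2,2}$ and the two-dimensional embeddings $H^s\hra B^{s-1}_{\infty,\infty}$, $H^s\hra L^\infty$ for $s>1$, and $B^\tau_{1,1}\hra H^{\tau-1}$ doing the index bookkeeping. Your diagnosis of the $\delta$-losses as the endpoint failure of $H^1\hra L^\infty$ in dimension two, absorbed by the slack $\eta>0$ in (iii) (the paper's choice $\eta=1-\kappa$) and by the genuine $L^\infty$ bound $H^2\hra L^\infty$ in (iv), matches the paper's treatment exactly.
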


\begin{proof}
To begin with, we show that the product of two tempered distributions maps continuously~$H^{-\eta}\times H^{1-\delta}$ into $H^{-\eta-\delta}$.
Indeed, let us take two tempered distributions $a\in H^{-\eta}$ and~$w\in H^{1-\delta}$, and let us write
\begin{equation} \label{eq:paraprod_a-w}
a\,w\,=\,T_aw\,+\,T_wa\,+\,R(a,w)\,.
\end{equation}
By a systematic use of Proposition \ref{p:op} and of embeddings $H^s\hra B^{s-1}_{\infty,\infty}$, we deduce that
$$
\left\|T_aw+T_wa\right\|_{H^{-\eta-\delta}}\,+\,\left\|R(a,w)\right\|_{B^{1-\eta-\delta}_{1,1}}\,\leq\,C\,\|a\|_{H^{-\eta}}\,\|w\|_{H^{1-\delta}}\,.
$$
At this point, the continuous embedding $B^{1-\eta-\delta}_{1,1}\,\hra\,H^{-\eta-\delta}$ completes the proof of our claim.

For the second point, we apply Bony's paraproduct decomposition to get once again \eqref{eq:paraprod_a-w}, with  $a\in H^{-\eta}$ and $w\in H^{1}$.
Keeping in mind the embedding $H^s\,\hookrightarrow\,B^{s-1}_{\infty,\infty}$, by application of Proposition \ref{p:op} we get, for all $\delta>0$,
$$
\left\|T_{a}w\right\|_{H^{-\eta}}\,+\,\left\|T_wa\right\|_{H^{-\eta-\delta}}\,\leq\,C\,\left\|a\right\|_{H^{-\eta}}\,
\|w\|_{H^1}\,.
$$
On the other hand, the remainder operator can be controlled again by Proposition \ref{p:op} in the following way:
$$
\left\|R(a,w)\right\|_{B^{1-\eta}_{1,1}}\,\leq\,C\,\left\|a\right\|_{H^{-\eta}}\,\|w\|_{H^1}\,,
$$
and the continuous embedding $B^{1-\eta}_{1,1}\,\hookrightarrow\,H^{-\eta}$ completes the proof.

As for point (iii) of the statement, for $a\in H^{\eta}$ and $w\in H^{1}$, we write for~$\kappa>0$
$$
 \left\|T_{a}w\right\|_{L^2}\,+\,\left\|T_{w}a\right\|_{L^2}  \leq   \|a\|_{B^{-\kappa}_{\infty,\infty}}\,\|w\|_{H^\kappa}\,+\,\|a\|_{H^1}\,\|w\|_{B^{-1}_{\infty,\infty}}\,,
$$
so in particular taking $\eta =1- \kappa$
$$
 \left\|T_{a}w\right\|_{L^2}\,+\,\left\|T_{w}a\right\|_{L^2}  \leq   \|a\|_{H^{\eta} }\,\|w\|_{H^1}\,.
$$
On the other hand
$$
\left\|R(a,w)\right\|_{L^2} \leq   \|a\|_{H^{\eta} }\,\|w\|_{H^1}\,.
$$

The fourth item of the claim easily follows from analogous arguments, keeping in mind the embedding $H^s\,\hookrightarrow\,L^\infty$ for all $s>1$ (in dimension $d=2$). The last item is also a straightforward consequence of Proposition \ref{p:op}, its proof is omitted.
The corollary is now proved.
\end{proof}

We recall also a classical commutator estimate (see e.g. Lemma 2.97 of \cite{B-C-D}), which is needed in our analysis.
\begin{lemma} \label{l:commut}
Let $h\in\mc{C}^1(\R^d)$ such that $\bigl(1+|\,\cdot\,|\bigr)\what{h}\,\in\,L^1$. There exists a constant $C$ such that,
for any Lipschitz function $\ell\in W^{1,\infty}(\R^d)$ and any $f\in L^p(\R^d)$ and for all $\lambda>0$, one has
$$
\left\|\bigl[h(\lambda^{-1}D),\ell\bigr]f\right\|_{L^p}\,\leq\,C\,\lambda^{-1}\,\left\|\nabla\ell\right\|_{L^\infty}\,\|f\|_{L^p}\,.
$$
\end{lemma}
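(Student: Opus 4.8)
The plan is to reduce the commutator to convolution against an explicit rescaled kernel and then apply Young's inequality, the gain of $\lambda^{-1}$ coming entirely from the difference $\ell(y)-\ell(x)$ produced by the commutator structure. Setting $k:=\mc F^{-1}h$, the operator $h(\lambda^{-1}D)$ acts by convolution with $K_\lambda(x):=\lambda^d\,k(\lambda x)$ (this is just the scaling law of the inverse Fourier transform applied to the symbol $h(\lambda^{-1}\cdot)$). Hence, for $f$ Schwartz (the general case following by density, since the kernel will be shown to be integrable), one has
$$
\bigl[h(\lambda^{-1}D),\ell\bigr]f(x)\,=\,\int_{\R^d}\lambda^d\,k\bigl(\lambda(x-y)\bigr)\,\bigl(\ell(y)-\ell(x)\bigr)\,f(y)\,dy\,.
$$
First I would justify this representation by Fubini, using $k\in L^1$ (guaranteed below) together with the Lipschitz bound on $\ell$ to control the double integral.

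Next, the hypothesis $\ell\in W^{1,\infty}$ gives the pointwise bound $|\ell(y)-\ell(x)|\le\|\nabla\ell\|_{L^\infty}\,|x-y|$, whence
$$
\bigl|\bigl[h(\lambda^{-1}D),\ell\bigr]f(x)\bigr|\,\le\,\|\nabla\ell\|_{L^\infty}\,\int_{\R^d}\lambda^d\,\bigl|k\bigl(\lambda(x-y)\bigr)\bigr|\,|x-y|\,|f(y)|\,dy\,=\,\|\nabla\ell\|_{L^\infty}\,\bigl(\wtilde K_\lambda * |f|\bigr)(x)\,,
$$
where $\wtilde K_\lambda(z):=\lambda^d\,|k(\lambda z)|\,|z|$. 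Young's inequality for convolutions then yields $\|\wtilde K_\lambda*|f|\|_{L^p}\le\|\wtilde K_\lambda\|_{L^1}\,\|f\|_{L^p}$, and the change of variables $w=\lambda z$ gives the crucial scaling
$$
\|\wtilde K_\lambda\|_{L^1}\,=\,\int_{\R^d}\lambda^d\,|k(\lambda z)|\,|z|\,dz\,=\,\lambda^{-1}\int_{\R^d}|w|\,|k(w)|\,dw\,.
$$

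Finally I would identify the constant. Since $k=\mc F^{-1}h$ satisfies $|k(x)|=|\what h(-x)|$ up to the fixed normalization of the transform, one has $\int_{\R^d}|w|\,|k(w)|\,dw=C\,\bigl\||\cdot|\,\what h\bigr\|_{L^1}\le C\,\bigl\|(1+|\cdot|)\what h\bigr\|_{L^1}<+\infty$, finite precisely by hypothesis; the same assumption also supplies $k\in L^1$, which was used above to make the kernel representation rigorous. Combining the three displays gives $\bigl\|[h(\lambda^{-1}D),\ell]f\bigr\|_{L^p}\le C\,\lambda^{-1}\,\|\nabla\ell\|_{L^\infty}\,\|f\|_{L^p}$ with $C$ depending only on $h$, as claimed. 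The only genuinely delicate point is the bookkeeping in the kernel representation and its Fubini justification; everything else is a clean scaling computation, and the weighted integrability hypothesis $(1+|\cdot|)\what h\in L^1$ is tailored exactly so that the kernel $|z|\,k(z)$ lies in $L^1$.
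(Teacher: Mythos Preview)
Your argument is correct and is precisely the standard kernel proof of this commutator estimate; the paper does not supply its own proof but simply refers to Lemma 2.97 of \cite{B-C-D}, whose argument is exactly the one you wrote. In particular, your computation identifies the constant as (a multiple of) $\bigl\||\cdot|\,k\bigr\|_{L^1}$ with $k=\mc F^{-1}h$, which matches the remark the paper makes immediately after the lemma.
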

Going along the lines of the proof, it is easy to see that the constant $C$ depends just on the~$L^1$ norm
of the function $|x|\,k(x)$, where $k\,=\,\mc{F}_\xi^{-1}h$ is the inverse Fourier transform of $h$.

To conclude, let us recall Gagliardo-Nirenberg inequalities, which we have repeatedly used in Subsection \ref{ss:hom_limit}. We refer e.g. to Corollary 1.2 of \cite{C-D-G-G}
for their proof.
\begin{prop} \label{p:Gagl-Nir}
Let $p\in[2,+\infty[\,$ such that $1/p\,>\,1/2\,-\,1/d$. There exists a constant $C>0$ such that, for any domain $\Omega\subset\R^d$
and for all $u\in H^1_0(\Omega)$, the following inequality holds true:
$$
\|u\|_{L^p(\Omega)}\,\leq\,C\,\|u\|_{L^2(\Omega)}^{1-\lam}\;\|\nabla u\|_{L^2(\Omega)}^{\lam}\,,\qquad\qquad\mbox{ with }\qquad
\lam\,=\,\frac{d\,(p-2)}{2\,p}\,\cdotp
$$
\end{prop}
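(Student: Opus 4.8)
The plan is to reduce the statement to the case $\Omega=\R^d$ and then to run a dyadic (Littlewood--Paley) argument, exactly in the spirit of the toolbox of Appendix~\ref{app:LP}. First I would extend $u$ by zero outside $\Omega$: since $u\in H^1_0(\Omega)$, the extension $\wtilde u$ lies in $H^1(\R^d)$ with $\|\wtilde u\|_{L^p(\R^d)}=\|u\|_{L^p(\Omega)}$, $\|\wtilde u\|_{L^2(\R^d)}=\|u\|_{L^2(\Omega)}$ and $\|\nabla\wtilde u\|_{L^2(\R^d)}=\|\nabla u\|_{L^2(\Omega)}$. Hence it suffices to prove the inequality on $\R^d$ with a constant $C$ depending only on $d$ and $p$; this is precisely what yields the announced uniformity with respect to the domain. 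The case $p=2$ (i.e.\ $\lam=0$) is trivial, so I assume $p>2$, and I may assume $u\neq0$ and $\nabla u\neq0$ (if $\nabla u=0$ then $u$ is constant and, being in $L^2(\R^d)$, vanishes).

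The exponent $\lam=\frac{d(p-2)}{2p}=d\bigl(\tfrac12-\tfrac1p\bigr)$ is dictated by scaling: writing $u_\mu(x):=u(\mu x)$, both sides of the claimed inequality scale like $\mu^{-d/p}$ exactly when $\lam$ takes this value. I would exploit this homogeneity to normalise: choosing $\mu=\|u\|_{L^2}/\|\nabla u\|_{L^2}$ and renaming $u_\mu$ as $u$, I may assume $\|u\|_{L^2}=\|\nabla u\|_{L^2}=:A$, so that it only remains to prove $\|u\|_{L^p}\leq C\,A$. Using the decomposition $u=\sum_{j\geq-1}\Delta_j u$ and the first Bernstein inequality of Lemma~\ref{l:bern} (with source exponent $2$ and target exponent $p$, at frequency scale $2^j$), each block satisfies
$$
\left\|\Delta_j u\right\|_{L^p}\,\leq\,C\,2^{\,j\,d(1/2-1/p)}\,\left\|\Delta_j u\right\|_{L^2}\,=\,C\,2^{\,j\lam}\,\left\|\Delta_j u\right\|_{L^2}\,,
$$
whence $\|u\|_{L^p}\leq\sum_{j\geq-1}\|\Delta_j u\|_{L^p}\leq C\sum_{j\geq-1}2^{\,j\lam}\,\|\Delta_j u\|_{L^2}$.

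I would then split this sum into a low- and a high-frequency part. For $j\in\{-1,0\}$ I use the trivial bound $\|\Delta_j u\|_{L^2}\leq\|u\|_{L^2}=A$ (each $\Delta_j$ being a bounded Fourier multiplier, cf.\ \eqref{eq:LP-Sob}), which contributes at most $C\,A$. For $j\geq1$ the spectrum of $\Delta_j u$ lies in an annulus of size $2^j$, so the second (annulus) Bernstein inequality of Lemma~\ref{l:bern} gives $2^{j}\|\Delta_j u\|_{L^2}\leq C\|\nabla\Delta_j u\|_{L^2}\leq C\|\nabla u\|_{L^2}=C\,A$, hence
$$
2^{\,j\lam}\,\left\|\Delta_j u\right\|_{L^2}\,\leq\,C\,2^{\,j(\lam-1)}\,A\,.
$$
Summing over $j\geq1$ gives $C\,A\sum_{j\geq1}2^{\,j(\lam-1)}<+\infty$. \emph{This is the one place where the hypothesis $1/p>1/2-1/d$ is essential}: it is exactly equivalent to $\lam=d(1/2-1/p)<1$, which makes the geometric series in the high-frequency regime convergent; without it the sum diverges and the subcritical range is precisely the range of validity. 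Combining the two contributions yields $\|u\|_{L^p}\leq C\,A=C\,\|u\|_{L^2}^{1-\lam}\|\nabla u\|_{L^2}^{\lam}$ in the normalised variables, and undoing the rescaling restores the inequality in full. The main (very mild) obstacle is purely bookkeeping in the dyadic splitting and the commutation of $\Delta_j$ with $\nabla$; the conceptual content is entirely captured by the two Bernstein estimates together with the constraint $\lam<1$.
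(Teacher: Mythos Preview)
Your proof is correct. The paper itself does not prove this proposition: it simply records the statement and refers to Corollary~1.2 of \cite{C-D-G-G} for a proof. Your argument---reduction to $\R^d$ by zero extension, scale normalisation, and a dyadic split using the two Bernstein inequalities of Lemma~\ref{l:bern}---is exactly the Littlewood--Paley approach one expects in this context and is fully in the spirit of Appendix~\ref{app:LP}; indeed the cited reference \cite{C-D-G-G} uses the same toolbox. The identification of the hypothesis $1/p>1/2-1/d$ with the summability condition $\lam<1$ for the high-frequency geometric series is the key structural point, and you isolate it clearly. One cosmetic remark: your low/high split at $j=1$ is harmless but slightly arbitrary, since already for $j\geq0$ the block $\Delta_j u$ is spectrally supported in an annulus and the reverse Bernstein inequality applies; only $\Delta_{-1}$ genuinely needs the trivial $L^2$ bound.
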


{\small

}

\end{document}